\documentclass[11pt]{amsart}
\usepackage{geometry}                % See geometry.eps to learn the layout options. There are lots.
\geometry{letterpaper}                   % ... or a4paper or a5paper or ...
\usepackage{graphicx}
\usepackage{amssymb}
\usepackage{epstopdf}
\usepackage{amssymb, amsmath, amscd, amsthm,
color, epsfig,enumerate
}
\usepackage[all]{xy}          % for xy-pic pictures
\xyoption{dvips}              % for xy-pic pictures

\title{Homology of non-$k$-overlapping discs}
\author{Natalya Dobrinskaya}
\address{}
\email{ne.dobrinsk@gmail.com}
%\thanks{}
\author{Victor Turchin}
\address{Department of Mathematics\\ Kansas State University \\ Manhattan, KS 66506 \\ USA}
\email{turchin@ksu.edu}
\subjclass[2010]{18D50, 57R40, 57R42, 05E18}
\date{}                                           % Activate to display a given date or no date

\newcommand{\calB}{\mathcal{B}}

\newcommand{\calO}{\mathcal{O}}
\newcommand{\calF}{\mathcal{F}}

\newcommand{\calR}{\mathcal{R}}
\newcommand{\calM}{\mathcal{M}}
\newcommand{\calH}{\mathcal{H}}

\newcommand{\R}{{\mathbb R}}
\newcommand{\Z}{{\mathbb Z}}
\newcommand{\Q}{{\mathbb Q}}

\newcommand{\sEmb}{\operatorname{sEmb}}
\newcommand{\Emb}{\operatorname{Emb}}
\newcommand{\Ebar}{\overline{\Emb}}
\newcommand{\Imm}{\operatorname{Imm}}
\newcommand{\Ibar}{\overline{\Imm}}

\newcommand{\Tot}{\operatorname{Tot}}
\newcommand{\id}{\mathrm{id}}

\newcommand{\one}{{\mathtt 1}}

\newcommand{\hRmod}{\operatorname{hRmod}}

\newcommand{\hIBim}{\operatorname{hIBim}}
\newcommand{\hOperad}{\operatorname{hOperad}}
\newcommand{\hBim}{\operatorname{hBim}}
%\newcommand{\hWBimod}{\operatorname{hWBimod}}

% commands added by Victor
\usepackage{graphicx, psfrag,rotating}

\newcommand{\Assoc}{{\mathcal A}ssoc}

\newcommand{\Com}{{\mathcal C}om}
\newcommand{\Lie}{{\mathcal L}ie}

\newcommand\rth{\refstepcounter{equation}}
\newcommand\numb{\rth{\rm \theequation}}
\numberwithin{equation}{section}

% end of commands added by Victor

\theoremstyle{plain}
\newtheorem{theorem}{Theorem}[section]
\newtheorem{proposition}[theorem]{Proposition}
\newtheorem{lemma}[theorem]{Lemma}
\newtheorem{corollary}[theorem]{Corollary}

\theoremstyle{definition}
\newtheorem{definition}[theorem]{Definition}
\newtheorem{example}[theorem]{Example}

\theoremstyle{remark}
\newtheorem{remark}[theorem]{Remark}

\begin{document}

\begin{abstract}
In this paper we describe the homology and cohomology of some natural bimodules  over the little discs operad, whose components are configurations of non-$k$-overlapping discs. At the end we briefly explain how this algebraic structure intervenes in the study of spaces of non-$k$-equal immersions.
\end{abstract}

\maketitle

%\tableofcontents

\section{Introduction}\label{s:intro}

Let $\calB_d$ denote the operad of little $d$-discs. We will consider the bimodules $\calB_d^{(k)}$, $k\geq 2$, over it, with the $n$-th component $\calB_d^{(k)}(n)$ being the configuration space of $n$ open discs (labeled by $1\ldots n$) in a unit disc satisfying the {\it non-$k$-overlapping condition}: intersection of any $k$ of them is empty. It is straightforward that $\calB_d^{(k)}$ is a bimodule over $\calB_d$. The left action is given by the maps
$$
\calB_d(n)\times \calB_d^{(k)}(m_1)\times \calB_d^{(k)}(m_2)\times\ldots\times \calB_d^{(k)}(m_n)\to
\calB_d^{(k)}(m_1+\ldots +m_n)
$$
that consist in replacing the $i$-th disc in $\calB_d(n)$  by a configuration of discs from $\calB_d^{(k)}(m_i)$. The right action is given by similar maps
$$
\calB_d^{(k)}(n)\times \calB_d(m_1)\times \calB_d(m_2)\times\ldots\times \calB_d(m_n)\to
\calB_d^{(k)}(m_1+\ldots +m_n).
$$
Obviously, in both cases the resulting configuration always satisfies the non-$k$-overlapping condition, thus both composition maps are well defined.

It is easy to see that the space $\calB_d^{(k)}(n)$ is homotopy equivalent to the complement in $(\R^d)^{\times n}$  to the union of subspaces
$$
A_I=\left\{ (x_1,\ldots,x_n)\in (\R^d)^{\times n} \, \left| \, x_{i_1}=\ldots=x_{i_k}\right.\right\},
$$
where $I=\{i_1\ldots i_k\}$ runs through all cardinality $k$ subsets of $\underline{n}=\{1\ldots n\}$. We denote this complement by $\calM_d^{(k)}(n)$. By taking the centers of the balls one gets a map $\calB_d^{(k)}(n)\to \calM_d^{(k)}(n)$ which is a homotopy equivalence.

The homology groups of $\calM_d^{(k)}(n)$ were first computed by Bj\"orner and Welker~\cite{BjorWelk}, see also~\cite{BjorWachs1,BjorWachs2}. The cohomology algebra $H^*\calM_2^{(k)}(n)$ was determined by Yuzvinsky~\cite{Yuzv}. The latter reference also produces a rational model for $\calM_2^{(k)}(n)$. Based on this model it was shown in~\cite{Miller} that the spaces $\calM_2^{(3)}(n)$, $n\geq 7$, have non-trivial Massey products and thus are not formal. The cohomology algebra $H^*\calM_1^{(k)}(n)$ was computed by Baryshnikov~\cite{Bar}. The symmetric group action on $H_*\calM_d^{(k)}(n)$ was computed by Sundaram and Wachs~\cite{SundWachs}. Even though the (co)homology of $\calM_d^{(k)}(n)$ is now well understood, few of the references  give a geometric description of this (co)homology. In fact only in the case $d=1$ one has a geometrical description of this homology given by the first author in~\cite{Dobr3} and a geometrical description of cohomology given by Baryshnikov in~\cite{Bar}. More precisely, in~\cite{BjorWelk,BjorWachs1,BjorWachs2} the authors use the Goresky-MacPherson formula that describes the homology of the complement to a subspace arrangement in terms of cohomology of certain posets (of strata in the arrangement). In case of  $\calM_d^{(k)}(n)$ one has to study the poset $\Pi_{n,k}$ of subsets of $\{1\ldots n\}$ whose cardinality is either one or $\geq k$. Yuzvinsky's method is also purely combinatorial -- it produces a rational model for $\calM_2^{(k)}(n)$ and more generally for any complement to a complex arrangement based on the combinatorics of the Goresky-MacPherson complex. Another  approach for the case $d=1$ appears in~\cite{PRW} that describes the homology over a field of more general diagonal arrangements in terms of the homology of monomial rings. Applied to the case of non-$k$-equal arrangements this approach produces the Betti numbers of $\calM_1^{(k)}$. Following this idea improved to integral coefficients and using homology algebra methods, an algebraic structure similar to the one studied in this paper for $d=1$ appeared in~\cite{Dobr3}.

In this paper we also give a more geometrical description of the cohomology algebra $H^*\calB_d^{(k)}(n)$. In particular we show that this algebra is quadratic which seems to be known only in two  cases: $k=2$~\cite{Arn,Coh}, and $d=1$~\cite{Bar}. (In the other cases $k\geq3$ and $d\geq 2$ the generators lie in different degrees, but as we said all relations still follow from quadratic ones.)  Since our description is very geometrical we hope it will help to understand better the rational homotopy type of $\calM_d^{(k)}(n)$, in particular to compute the Massey products for these spaces.

As a left module $H_*\calB_d^{(k)}$, $k\geq 3$, is generated by two elements $x_1\in H_0\calB_d^{(k)}(1)$ and $\{x_1,\ldots,x_k\}\in H_{(k-1)d-1}\calB_d^{(k)}(k)$, see Theorem~\ref{t:l_mod_main}. Notice that $\calB_d^{(k)}(1)\simeq *$ and $\calB_d^{(k)}(k)\simeq S^{(k-1)d-1}$. The elements $x_1$ and $\{x_1\ldots x_k\}$ are the generators of the correspoding homology groups. Explicitly this result means that the homology groups $H_*\calB_d^{(k)}(n)$ are spanned by certain products of iterated brackets. Such classes can be geometrically realized as products of spheres in $\calB_d^{(k)}(n)$. One should mention that such description of the homology in terms  of iterated brackets  is implicitly given in~\cite{Feicht}, where the author shows that the poset $\Pi_{n,k}$ is quasi-isomorphic to a poset of certain trees. Here one can see a connection to a work of Gaiffi~\cite{Gaiffi} that produces a general construction of a compactification of the complement to a subspace arrangement. In the case of $\calM_d^{(k)}(n)$ the strata  of the compactification are encoded exactly by the trees from~\cite{Feicht}. In fact Gaiffi's work can  be used to produce geometric cycles in the homology of the complement to any arrangement.

The structure of a bimodule over $H_*\calB_d$ that $H_*\calB_d^{(k)}$ has, not only it gives a very explicit geometric description of cycles that span this homology, but also  is important for applications. One application is  in the study of spaces of non-$k$-self-intersecting immersions. We describe briefly this connection in Section~\ref{s:application}. %This last section might seem separated from the rest of the paper. But we felt that it was important to add it here. It shows that the combinatorial creatures that we study do appear naturally in topology.

Another important application is in the study of the homology of iterated loop spaces of  fat wedges. %(corresponding to the $(k-2)$-skeleton of a simplex).
 First examples of such computations go back to Lemaire's work~\cite{Lemaire} for single-loop spaces on fat
wedges of spheres, who computed its homology over a field. In~\cite{Dobr1,Dobr2,Dobr3} 
a more general problem for loops on fat wedges of arbitrary spaces is considered,
and the homology is computed via homology of diagonal arrangements with
algebraic structure similar to bimodule on $\calB_1^{(k)}$. 
%The case of single-loop spaces was studied in~\cite{Lemaire,Dobr1,Dobr2,Dobr3}, where the bimodule structure of $H_*\calB_1^{(k)}$ does implicitly appear. (The algebraic structure that one needs is not exactly the same but very similar.) To be precise Lemaire studies the homology over a field of loops on fat wedges of spheres expressing it in terms of certain $Ext$ groups and computing them in case of  fat products encoded by skeletons of a simplex. Dobrinskaya works with integral coefficients and generalizes the result to fat products of arbitrary spaces: she describes the homology of their loop space using a natural algebra structure  in the homology of diagonal arrangements~\cite{Dobr1,Dobr2,Dobr3}. (Here the reader can see a connection to~\cite{PRW} which was further developed in~\cite{Dobr3}.)
 The long brackets $\{x_1\ldots x_k\}$ discussed above  correspond to  higher commutator products on loop homology induced by  Samelson products.  A similar description  of the homology of iterated $d$-loops on fat wedges must exist and  as we hope will be studied elsewhere.

\subsection{Notation}\label{ss:notation}

By a symmetric sequence we will understand a sequence of objects $M(n)$, $n\geq 0$, where each $M(n)$ is endowed with an action of the symmetric group $\Sigma_n$.  Alternatively and this will be useful sometimes for our arguments, we will understand a symmetric sequence as a functor from the category of finite sets whose morphisms are bijections. For example for a finite set $I$,  the corresponding configuration space (or its homology) whose points/discs are encoded by elements from $I$, will be denoted by    $\calM_d^{(k)}(I)$, $\calB_d^{(k)}(I)$, $H_*\calB_d^{(k)}(I)$. The permutation group of $I$ will be denoted by $\Sigma_I$. The cardinal of~$I$ will be denoted by~$|I|$. The set $\{1\ldots n\}$ will be denoted by $\underline{n}$.

All the homology and cohomology groups that we consider are taken with integral coefficients.

\subsection{Main results}\label{ss:main_results}

Our main result is Theorem~\ref{t:bimod_main} where we describe the $H_*\calB_d$-bimodule structure of $H_*\calB_d^{(k)}=H_*\calM_d^{(k)}$. Another important result --- we give a more natural
 description of the cohomology algebras $H^*\calB_d^{(k)}(n)$ as spaces spanned by cerain forests, see
 Sections~\ref{s:coh_space}-\ref{s:multiplicative}. Such description is nicely compatible with the structure of a cobimodule that $H^*\calB_d^{(k)}$ has, see Section~\ref{s:co}. As we have mentioned the spaces
 $\calM_d^{(k)}(n)$ were extensively studied. In particular to prepare this note we found very useful~\cite{Bar,SundWachs,Yuzv}. However, the presentation of this paper is self contained --- all the arguments and proofs are not formally relying on other results or computations. For which reason we hope it will also be of educational value.

\subsection{Acknowledgements}\label{ss:ackn}
The authors are grateful to the Universit\'e  Catholique de Louvain where they were both working in the Spring 2007 and where one of the main results Theorem~\ref{t:bimod_main} was obtained. A special  thank goes to I.~F\'elix and P.~Lambrechts for the invitation to work  at the UCL and for numerous  mathematical discussions. The second author  is grateful to the MPIM and IHES where he was working on the main details of the proofs. Finally the authors thank V.~Vassiliev for asking a question to which Theorem~\ref{t:cosimpl} is an answer, and J.~Mostovoy for encouraging to write this note.

\section{Homology and cohomology of $\calB_d$}\label{s:h_ld}

The homology of the little discs operad is well known~\cite{Coh}:

\begin{theorem}[F. Cohen \cite{Coh}]\label{t:h_ld}
The homology operad $H_*\calB_d$  is the operad of associative unital algebras in case $d=1$ and the operad of graded unital Poisson algebras   with bracket of degree $(d-1)$ in case $d\geq 2$.
\end{theorem}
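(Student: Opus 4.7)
The plan is to prove this classical theorem by first identifying the generators in low arities, then checking the required identities, and finally establishing the absence of further relations by a dimension count. I would begin by inspecting the low-arity components of $\calB_d$. The space $\calB_d(1)$ is contractible, giving the unit $1\in H_0\calB_d(1)$. The space $\calB_d(2)$ deformation retracts onto $S^{d-1}$ via the map that sends a pair of discs to the unit vector from the center of the first to the center of the second. For $d=1$, $\calB_1(2)\simeq S^0$ has two path components which under $H_0$ produce the product $\mu$ and its opposite; for $d\geq 2$, the fundamental class in $H_0$ gives $\mu$ and the top class in $H_{d-1}(S^{d-1})$ gives the Lie bracket $\lambda$ of degree $d-1$.

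Next I would verify all the operad relations inside the homology of $\calB_d(3)$. For $d=1$, $\calB_1(3)$ has $3!=6$ components indexed by the orderings of three points on the line, and the operadic composites $\mu\circ_1\mu$ and $\mu\circ_2\mu$ both pick out the identity ordering, giving associativity. For $d\geq 2$, $\calB_d(2)$ is connected so $\mu$ is commutative. The Jacobi identity for $\lambda$ is obtained by realizing the iterated brackets $\lambda\circ_1\lambda$, $(\lambda\circ_2\lambda)\cdot(12)$, $(\lambda\circ_2\lambda)\cdot(13)$ as three copies of $S^{d-1}$ in $\calB_d(3)$ whose sum bounds a $d$-chain — geometrically, the locus where the three centers are collinear. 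The Leibniz rule $\lambda\circ_1\mu=\mu\circ_2\lambda\pm(\mu\circ_1\lambda)\cdot(23)$ comes from a $(d-1)$-chain in $\calB_d(3)$ exhibiting the corresponding cycles as cohomologous.

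The main obstacle is the completeness statement: showing that the resulting morphism from the free associative (respectively, Poisson) operad onto $H_*\calB_d$ has no kernel. For this I would compute $H_*\calB_d(n)\cong H_*F(n,\R^d)$ via the Fadell--Neuwirth fibration
\[
\R^d\setminus\{p_1,\ldots,p_{n-1}\}\longrightarrow F(n,\R^d)\longrightarrow F(n-1,\R^d),
\]
whose fiber is homotopy equivalent to $\bigvee_{n-1}S^{d-1}$. An induction on $n$, using the fact that each such fibration is trivial in cohomology (the generators extend globally), gives that $H_*F(n,\R^d)$ is free abelian with Poincaré polynomial
\[
\prod_{i=1}^{n-1}\bigl(1+i\, t^{d-1}\bigr).
\]
On the other hand, the Poincaré--Birkhoff--Witt theorem for the free Poisson operad (or the obvious normal-form argument for $d=1$) shows that $\Poiss(n)$ has exactly the same Poincaré polynomial, with the degree-$j(d-1)$ piece spanned by products $x_{i_1}\cdots x_{i_p}\cdot\lambda(\cdots)\cdots\lambda(\cdots)$ involving $j$ brackets. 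A comparison of these identical Poincaré polynomials shows that the surjection from the free operad to $H_*\calB_d$ is injective arity by arity, which completes the proof.
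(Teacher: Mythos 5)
The paper does not prove this theorem at all: it is stated as a classical result of Cohen, cited to~\cite{Coh}, and followed only by a short paragraph recalling what the generators $1$, $x_1\cdot x_2$, $[x_1,x_2]$ look like inside $H_*\calB_d(0)$, $H_*\calB_d(2)$, with a pointer to~\cite{Sinha_ld} for a full exposition. Your proposal, by contrast, is an actual proof sketch, so the two are not comparable as competing arguments — you are supplying genuinely new content.

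Your outline is a standard and essentially sound route to Cohen's theorem: exhibit the generators in arities~$0,1,2$, check the quadratic relations (associativity/commutativity, Jacobi, Leibniz) in arity~$3$, obtain a surjection from the free operad, and kill the kernel by matching Poincar\'e polynomials via the Fadell--Neuwirth fibration with fiber $\bigvee_{n-1}S^{d-1}$, using that the fibration splits additively because the cohomology generators $\alpha_{in}$ extend to the total space. The Poincar\'e polynomial $\prod_{i=1}^{n-1}(1+i\,t^{d-1})$ does indeed agree with that of the free Poisson operad, and the $d=1$ argument via the $n!$ components is correct. One detail that should be tightened: the chain you offer for the Jacobi relation, ``the locus where the three centers are collinear,'' does not have the right dimension to be the relevant bounding chain — the collinearity locus in $F(3,\R^d)$ has codimension $d-2$, hence dimension $2d+2$, while the three cycles $[[x_i,x_j],x_\ell]$ live in degree $2d-2$ and should be bounded by a $(2d-1)$-chain. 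Compare the paper's own proof of the generalized Jacobi~\eqref{eq:gen_jacob}, where the bounding chain is taken to be the sphere $\{\sum x_i=0,\ \sum |x_i|^2=1\}$ with tubular neighborhoods of the diagonal strata removed; specializing to three points gives the correct $(2d-1)$-chain. With that correction (and a similarly concrete chain for the Leibniz rule, which your sketch leaves vague), the argument is complete.
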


Below we briefly describe the geometrical meaning of this result. We would also like to suggest the expository paper~\cite{Sinha_ld}, where Cohen's theorem is explained in full detail.

In case $d=1$, the space $\calB_1(n)$, $n\geq 0$, has $n!$ contractible components. Thus its homology is concentrated in degree~0 and has rank $n!$. We get
$$
H_*\calB_1(n)=H_0\calB_1(n)=\Assoc(n).
$$
It is also obvious that the compositions agree.

In case $d\geq 2$, one has $\calB_d(0)=*$, and $\calB_d(2)\simeq S^{d-1}$. The generators of $H_0\calB_d(0)$, $H_0\calB_d(2)$ and of $H_{d-1}\calB_d(2)$ are respectively the elements $1$, $x_1\cdot x_2$ and $[x_1,x_2]$ of the Poisson operad.
Notice that the theorem above describes $H_*\calB_d(n)$ as a free $\Z$-module spanned by products of iterated brackets. The corresponding cycles are realized as products of spheres. For example,
 $[[x_1,x_2],x_3]\in H_*\calB_d(3)$ can be realized as $S^{d-1}\times S^{d-1}\to \calM_d^{(2)}(3)$, where the point 2 rotates around 1, and 3 rotates around 1 and 2. As another example $[x_1,x_2]\cdot [x_3,x_4]$ can be realized as $S^{d-1}\times S^{d-1}\to \calM_d^{(2)}(4)$, where 2 rotates around 1,
 and 4 does so around 3, moreover 1 and 2 stay far away from 3 and 4.

In Section~\ref{s:left_bim} we give a similar description of $H_*\calB_d^{(k)}(n)$ as a space spanned by products of iterated brackets with each such cycle realized by products of spheres.

\begin{theorem}[\cite{Arn,Coh}]\label{t:coh_conf}
The cohomology algebra $H^*\calB_d(n)$, $d\geq 2$, is generated by $\alpha_{ij}\in H^{d-1}\calB_d(n)$, $1\leq i\neq j\leq n$; the relations are $\alpha_{ij}=(-1)^d\alpha_{ji}$, $\alpha_{ij}^2=0$, $\alpha_{ij}\alpha_{jk}+\alpha_{jk}\alpha_{ki}+\alpha_{ki}\alpha_{ij}=0$.
\end{theorem}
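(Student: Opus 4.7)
The plan is to realize the generators geometrically as pullbacks of a fundamental class, verify the three families of relations directly, and then use the Fadell--Neuwirth fibration together with the Leray--Hirsch theorem to establish that nothing else is needed.

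First I would introduce the Gauss-type maps $p_{ij}\colon\calB_d(n)\simeq\calM_d^{(2)}(n)\to S^{d-1}$ defined by $p_{ij}(x_1,\ldots,x_n)=(x_i-x_j)/|x_i-x_j|$, and set $\alpha_{ij}:=p_{ij}^*(\omega)$ for $\omega\in H^{d-1}(S^{d-1})$ the fundamental class. The antisymmetry $\alpha_{ij}=(-1)^d\alpha_{ji}$ is immediate because $p_{ji}=a\circ p_{ij}$ with $a\colon S^{d-1}\to S^{d-1}$ the antipodal map, whose degree is $(-1)^d$. The relation $\alpha_{ij}^2=0$ follows from $\omega^2=0$ in $H^*(S^{d-1})$, since the square of $\omega$ has degree $2(d-1)>d-1$.

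Next I would verify the Arnold (three-term) relation. Any product $\alpha_{ij}\alpha_{jk}$ factors through the forgetful map $\calB_d(n)\to\calB_d(\{i,j,k\})$, so it suffices to check the relation in $H^*(\calB_d(3))$. The combined map $\varphi=(p_{12},p_{23},p_{31})\colon\calB_d(3)\to(S^{d-1})^3$ has image contained in the real hypersurface
\[
Y=\{(u,v,w)\in(S^{d-1})^3\mid\text{$u,v,w$ are positively dependent}\},
\]
because the three vectors $x_i-x_j$, $x_j-x_k$, $x_k-x_i$ satisfy $(x_i-x_j)+(x_j-x_k)+(x_k-x_i)=0$. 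A direct computation on $(S^{d-1})^3$ shows that the cocycle $\omega_{12}\omega_{23}+\omega_{23}\omega_{31}+\omega_{31}\omega_{12}$ (where $\omega_{ij}$ is the pullback of $\omega$ from the relevant factor) vanishes on $Y$; equivalently, one exhibits a primitive for the corresponding form on $\calB_d(3)$. Pulling back by $\varphi$ yields $\alpha_{12}\alpha_{23}+\alpha_{23}\alpha_{31}+\alpha_{31}\alpha_{12}=0$, and by relabelling the general Arnold relation follows.

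For generation and completeness I would argue by induction on $n$ using the Fadell--Neuwirth fibration $f\colon\calB_d(n)\to\calB_d(n-1)$ obtained by forgetting the $n$-th disc. The fiber over a configuration $(x_1,\ldots,x_{n-1})$ is homotopy equivalent to $\R^d\setminus\{x_1,\ldots,x_{n-1}\}\simeq\bigvee_{i=1}^{n-1}S^{d-1}$, whose cohomology is concentrated in degrees $0$ and $d-1$ and is free of rank $n-1$ in degree $d-1$. The restrictions of the classes $\alpha_{in}$, $1\le i\le n-1$, to a fiber give precisely a basis of $H^{d-1}(\mathrm{fiber})$, so the Leray--Hirsch theorem applies: $H^*(\calB_d(n))$ is a free $H^*(\calB_d(n-1))$-module with basis the monomials $\alpha_{i_1 n}\alpha_{i_2 n}\cdots\alpha_{i_r n}$ with $i_1<i_2<\cdots<i_r<n$ (plus the unit). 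In particular the inductive hypothesis gives generation of $H^*(\calB_d(n))$ by the $\alpha_{ij}$ and a complete description of a $\Z$-basis.

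The main obstacle is the last step: matching this basis against the algebra $A_n$ defined by the stated presentation. One orders the generators lexicographically and uses the antisymmetry and Arnold relations to reduce every product $\alpha_{ij}\alpha_{kn}$ with $k<n$ and $\{i,j\}\ne\{k,n\}$ either into a product involving $\alpha_{kn}$ on the right or to a combination of products within $A_{n-1}\subset A_n$; combined with $\alpha_{jn}^2=0$ this shows $A_n$ is spanned by the analogous admissible monomials. A rank comparison with the Leray--Hirsch basis (both sides having $\binom{n-1}{r}\cdot\mathrm{rank}\,H^*(\calB_d(n-1))$ generators in the appropriate filtration degree) then shows the surjection $A_n\twoheadrightarrow H^*(\calB_d(n))$ is an isomorphism. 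The combinatorics of this reduction --- in particular, ensuring that the Arnold relation suffices to move all indices into normal form --- is the delicate point, and the cleanest way to carry it out is by induction matched levelwise against the Leray--Hirsch filtration.
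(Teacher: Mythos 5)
The paper does not prove this theorem; it is a classical result quoted from Arnold and F.\ Cohen (see also the expository reference \cite{Sinha_ld} cited in Section~\ref{s:h_ld}), so there is no in-text argument to compare yours against. Your route --- Gauss maps to define $\alpha_{ij}$, direct checks of antisymmetry and $\alpha_{ij}^2=0$, and Fadell--Neuwirth plus Leray--Hirsch for generation and completeness --- is the standard one.

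However, there is a genuine error in the Leray--Hirsch step. The fiber of $f\colon\calB_d(n)\to\calB_d(n-1)$ is $\R^d\setminus\{x_1,\ldots,x_{n-1}\}\simeq\bigvee_{n-1}S^{d-1}$, whose cohomology is concentrated in degrees $0$ and $d-1$ only; it has total rank $n$, not $2^{n-1}$. Consequently Leray--Hirsch gives $H^*(\calB_d(n))$ as a free $H^*(\calB_d(n-1))$-module on the \emph{linear} classes $1,\alpha_{1n},\ldots,\alpha_{n-1,n}$, not on all products $\alpha_{i_1 n}\cdots\alpha_{i_r n}$. (Any product $\alpha_{in}\alpha_{jn}$ already restricts to zero on the fiber.) This also changes the bookkeeping in your final rank comparison: the correct count is $\operatorname{rank}H^*(\calB_d(n))=n\cdot\operatorname{rank}H^*(\calB_d(n-1))=n!$, to be matched against the admissible monomials in the presentation (products $\prod_{j\in S}\alpha_{f(j),j}$ with $S\subset\{2,\ldots,n\}$ and $f(j)<j$, of which there are $\prod_{j=2}^n j=n!$); the $\binom{n-1}{r}\cdot\operatorname{rank}H^*(\calB_d(n-1))$ count you wrote is not correct. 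With the corrected Leray--Hirsch basis the argument does close up: the Arnold relation rewrites each $\alpha_{in}\alpha_{jn}$ as a combination of $\alpha_{ij}\alpha_{jn}$ and $\alpha_{ij}\alpha_{in}$, i.e.\ as $H^*(\calB_d(n-1))$-multiples of a single $\alpha_{\cdot n}$, exactly as the module structure requires.

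Two smaller points. The set $Y$ of positively dependent triples in $(S^{d-1})^3$ has codimension $d-2$, not $1$, so calling it a hypersurface is misleading for $d>3$; and the assertion that the three-term cocycle restricts to zero on $Y$ is stated but not shown. It is cleaner (and requires no new computation) to deduce the Arnold relation from the corrected Leray--Hirsch statement itself: in $\calB_d(3)$ the group $H^{2(d-1)}$ is free of rank $2$ with basis $\alpha_{12}\alpha_{13},\,\alpha_{12}\alpha_{23}$, so $\alpha_{13}\alpha_{23}$ is forced to be a linear combination of these; the $\Sigma_3$-equivariance of the $\alpha_{ij}$ (together with $\alpha_{ij}=(-1)^d\alpha_{ji}$) pins the coefficients and yields the stated relation.
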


To any monomial of this algebra one can assign a graph on vertices $1,\ldots,n$ by putting an edge between $i$ and $j$ for every factor $\alpha_{ij}$. It follows from the relations that a monomial is non-zero if and only if the corresponding graph is a forest.

In Section~\ref{s:coh_space} we will give a similar description of $H^*\calB_d^{(k)}(n)$ as a free $\Z$-module spanned by certain forests and quotiented out by natural relations. The product of such forests will essentially be their superposition similarly to the case of $H^*\calB_d(n)$.

\section{$H_*\calB_d^{(k)}$ as a left module and as a bimodule}\label{s:left_bim}

One has a natural inclusion
$$
\calB_d(n)\subset\calB_d^{(k)}(n),\,\, n\geq 0,
\eqno(\numb)\label{eq:inclusion}
$$
which is null homotopic for $k\geq 3$ (in case $k=2$ it is an identity map). This can be shown by pulling apart (one after another) the discs in the configuration. Such a path goes through disc configurations  with at most  double overlaps. (More generally any inclusion $\calB_d^{(k)}(n)\subset \calB_d^{(k+1)}(n)$ is null by a similar argument.)

\begin{definition}\label{d:mod_under}
We say that $M$ is a {\it left module} (respectively, {\it bimodule}) {\it under} an operad $\calO$ if it is a left module (respectively, bimodule) over $\calO$ and is endowed with a map of left modules (respectively, bimodules) $\calO\to M$.
\end{definition}

As example relevant to us, due to the inclusion~\eqref{eq:inclusion}, $\calB_d^{(k)}$ is a bimodule under $\calB_d$. In fact in one of the applications in Section~\ref{s:application} it will be important that $\calB_d^{(k)}$ is not only a bimodule, but also a bimodule {\it under} $\calB_d$.

$\Com$ will denote the operad of commutative unital algebras over $\Z$.

\begin{definition}
An operad $\calO$ in graded $\Z$-modules is called {\it augmented} if it is endowed with a surjective map of operads $\calO\to\Com$.
\end{definition}

Notice that all the operads $H_*\calB_d$, $d\geq 1$, are naturally augmented since they arise as the homology of topological operads. This in particular implies that $\Com$ is a bimodule under $H_*\calB_d$.

\begin{definition}
We say that $M$ is a {\it pointed} left module (respectively, bimodule) under an augmented operad $\calO$ if $M$ is a left module (respectively, bimodule) under $\calO$,  the structure map $\calO\to M$ factors through $\Com$, and moreover the map $\Com\to M$ is an inclusion.
\end{definition}

Since all the maps~\eqref{eq:inclusion} are null for any $k\geq 3$, the bimodules $H_*\calB_d^{(k)}$, $k\geq 3$, are pointed under $H_*\calB_d$.

One has a natural forgetful functor from the category of pointed left modules (respectively, pointed bimodules) to the category of symmetric sequences, which  has a left adjoint. For a given symmetric sequence this left adjoint functor produces a free pointed left module (respectively, bimodule) generated by this sequence. Notice that the obtained left module (respectively, bimodule) is not free in the usual sense since it contains $\Com$ on which the Lie part of $H_*\calB_d$ acts trivially.

\begin{theorem}\label{t:l_mod_main}
For $k\geq 3$, the pointed left module $H_*\calB_d^{(k)}$ under $H_*\calB_d$ is generated by a single element $\{x_1,\ldots,x_k\}\in H_{(k-1)d-1}\calB_d^{(k)}(k)$ which is symmetric or skew symmetric depending on the parity of $d$:
$$
\{x_{\sigma_1}\ldots x_{\sigma_k}\}=(-1)^{|\sigma|d}\{x_1\ldots x_k\},\,\, \sigma\in\Sigma_k.
\eqno(\numb)\label{eq:sym_rel}
$$
The only relation that the left action has is the \underline{generalized Jacobi}:
$$
\sum_{i=1}^{k+1}(-1)^{(i-1)d}\left[x_i,\{x_1,\ldots,\hat x_i,\ldots,x_{k+1}\}\right]=0.
\eqno(\numb)\label{eq:gen_jacob}
$$
\end{theorem}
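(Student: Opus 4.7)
The plan is to construct the generator geometrically, verify the two stated relations by explicit arguments inside configuration spaces, exhibit enough cycles to prove generation, and finally close by a rank comparison with the Björner--Welker formula.

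First I would fix the generator. Modding out translation and positive scaling on the thin-diagonal complement in $(\R^d)^k$ gives the homotopy equivalence $\calB_d^{(k)}(k)\simeq S^{(k-1)d-1}$, so $H_{(k-1)d-1}\calB_d^{(k)}(k)\cong\Z$ and I would choose $\{x_1,\ldots,x_k\}$ to be a generator. For the symmetry relation \eqref{eq:sym_rel}, after the translation quotient a transposition $(ij)\in\Sigma_k$ acts on $\R^{(k-1)d}\cong\{\sum x_\ell=0\}$ as the reflection through the codimension-$d$ subspace $\{x_i=x_j\}$, hence as a degree $(-1)^d$ map on $S^{(k-1)d-1}$; multiplicativity in $\Sigma_k$ yields $(-1)^{|\sigma|d}$.

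Next I would establish the generalized Jacobi relation \eqref{eq:gen_jacob}. Its left-hand side is a $(kd-2)$-cycle in $\calB_d^{(k)}(k+1)$, and to show it bounds I would exhibit the $(kd-1)$-dimensional moduli space $\calM_d^{(k)}(k+1)/(\text{translations}\times\R_{>0})$ together with a Fulton--MacPherson-type compactification. Its codimension-one thin-diagonal boundary decomposes into $k+1$ strata indexed by which point $i$ is left outside the colliding $k$-cluster; the $i$-th stratum is a product $S^{d-1}\times S^{(k-1)d-1}$ representing precisely $[x_i,\{x_1,\ldots,\hat x_i,\ldots,x_{k+1}\}]$, and an orientation bookkeeping produces the sign $(-1)^{(i-1)d}$.

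For generation, I would realize every expected homology class by a product-of-spheres cycle associated with a tree in the sense of Feichtner~\cite{Feicht} and Gaiffi~\cite{Gaiffi}: internal vertices of valence $2$ correspond to binary Lie brackets, valence $k$ to long brackets, and sibling subtrees are multiplied commutatively. Each such tree produces an iterated bracket-monomial in $x_1,\ldots,x_n$, and by the Björner--Welker formula~\cite{BjorWelk} these cycles span $H_*\calB_d^{(k)}(n)$. Thus the canonical map $F\to H_*\calB_d^{(k)}$ from the free pointed left module under $H_*\calB_d$ on one (anti-)symmetric $k$-ary generator modulo \eqref{eq:gen_jacob} is surjective.

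The main obstacle is the converse: showing that $F(n)$ has no excess rank. I would construct a normal-form tree basis for $F(n)$ by combining the PBW-style basis for the Poisson part $H_*\calB_d$ (Theorem~\ref{t:h_ld}) with a Jacobi-driven rewriting rule that replaces one summand of \eqref{eq:gen_jacob} by the remaining $k$, landing on the same set of Feichtner trees. A bijection between these trees and the Björner--Welker index set then yields $\rank F(n)=\rank H_*\calB_d^{(k)}(n)$, upgrading surjectivity to an isomorphism and establishing that \eqref{eq:sym_rel} and \eqref{eq:gen_jacob} are a complete set of relations.
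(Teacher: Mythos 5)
Your verification of the symmetry relation and the generalized Jacobi is essentially the paper's argument: the reflection computation for \eqref{eq:sym_rel} is standard, and the Fulton--MacPherson boundary analysis for \eqref{eq:gen_jacob} is the same geometric picture as the paper's chain $C$ obtained from $S^{kd-1}$ minus $(k+1)$ tubular neighborhoods of $(d-1)$-spheres, just phrased in compactification language. Where you depart from the paper --- and where there is a genuine gap --- is the generation-and-completeness step. The assertion that ``by the Bj\"orner--Welker formula these cycles span $H_*\calB_d^{(k)}(n)$'' is not a valid deduction: Bj\"orner--Welker compute the rank of the homology via the Goresky--MacPherson formula, and knowing the rank does not tell you that any particular collection of cycles spans. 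To get surjectivity of the structure map from the free pointed left module $F$ onto $H_*\calB_d^{(k)}$ you need an actual spanning argument, and this is precisely Proposition~\ref{p:l_act_gener}, which the paper proves by an induction that pulls the last point $x_n$ away and identifies the boundary of the resulting $(s+1)$-chain with terms built from generators on fewer points. This is the key lemma your proposal is missing; you cannot substitute a rank count for it.

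Once surjectivity is in hand, your rank-comparison strategy would indeed work: $F(n)$ free of the right rank surjecting onto $H_*\calB_d^{(k)}(n)$ forces an isomorphism, and incidentally reproves torsion-freeness. This is a legitimate alternative to the paper's final step, but note two things. First, it is not self-contained --- it imports the Bj\"orner--Welker rank computation, whereas the paper deliberately exhibits explicit locally finite cocycles (the $k$-forests of Section~\ref{s:coh_space}, or the chains of Section~\ref{s:d1} for $d=1$) whose intersection pairing with the bracket monomials is upper-triangular, thereby establishing linear independence over $\Z$ directly and with no external input. Second, the ``Jacobi-driven rewriting'' you gesture at to produce a normal-form basis for $F(n)$ is genuinely delicate: one must fix a confluent normal form under \eqref{eq:sym_rel}, \eqref{eq:gen_jacob}, and the triviality \eqref{eq:lie_triv} of the bracket action, and this is exactly the combinatorial content of Propositions~\ref{p:d1_basis} and~\ref{p:d>1_basis}; your sketch should make this explicit before the bijection with the Bj\"orner--Welker index set can be claimed.
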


The element $\{x_1,\ldots,x_k\}\in H_{(k-1)d-1}\calB_d^{(k)}(k)\simeq \Z$ can be realized by a sphere in $\calM_d^{(k)}(k)\simeq \calB_d^{(k)}(k)$:
$$
\sum_{i=1}^k x_i^2=\varepsilon^2;\qquad \sum_{i=1}^k x_i=0,
$$
where $x_i$ is the $i$-th point in the configuration (equivalently the center of the $i$-th disc). For the theorem above one can choose any orientation of this sphere. Orientation will matter only when we will be speaking about the duality between the homology and cohomology, see Section~\ref{s:duality}.

\begin{proof}[Proof of \eqref{eq:gen_jacob}] The generalized Jacobi is very easy to see. Consider the intersection of $\calM_d^{(k)}(k+1)$ with the $(kd-1)$-sphere $\sum_{i=1}^{k+1}x_i=0$, $\sum_{i=1}^{k+1}x_i^2=1$. The resulting space is homotopy equivalent to $\calM_d^{(k)}(k+1)$. This space is the sphere $S^{kd-1}$ from which one removed $(k+1)$ disjoint $(d-1)$-spheres. Now consider the $(kd-1)$-chain $C$ which is this sphere $S^{kd-1}$ minus small tubular neighborhoods of the aforementioned $(d-1)$-spheres. It is easy to see that the boundary of $C$ produces exactly relation~\eqref{eq:gen_jacob}.
\end{proof}

\begin{remark}\label{r:two_gener}
 As a pointed left module under $H_*\calB_d$,  the sequence $H_*\calB_d^{(k)}$, $k\geq 3$, is generated by a single element, but as a left module it is generated by two elements $x_1\in H_0 \calB_d^{(k)}(1)$ and $\{x_1\ldots x_k\}\in H_{(k-1)d-1}\calB_d^{(k)}(k)$. The left submodule generated by $x_1$ is exactly $\Com=H_0\calB_d^{(k)}$. The Lie part of $H_*\calB_d$ acts trivially on this submodule which is equivalent to the relation
 $$
 [x_1,x_2]=0.
 \eqno(\numb)\label{eq:lie_triv}
 $$
 Geometrically this relations says that rotating one disc around the other produces a trivial homology class in $\calB_d^{(k)}(2)\simeq *$, $k\geq 3$.
\end{remark}

Theorem~\ref{t:l_mod_main} tells us that the left action of $H_*\calB_d$ suffices to completely describe the homology groups $H_*\calB_d^{(k)}(n)$ as spaces spanned by products of iterated brackets. The right action of $H_*\calB_d$ on $H_*\calB_d^{(k)}$ tells us what happens with the homology when the points in configurations get multiplied -- this will be important for applications, see Section~\ref{s:application}.

\begin{theorem}\label{t:bimod_main}
 For $k\geq 3$, the pointed bimodule $H_*\calB_d^{(k)}$ under $H_*\calB_d$ is generated by a single element $\{x_1,\ldots,x_k\}\in H_{(k-1)d-1}\calB_d^{(k)}$ satisfying the symmetry~\eqref{eq:sym_rel}, generalized Jacobi~\eqref{eq:gen_jacob}, and Leibniz relations with respect to the right action:
 \begin{gather}
 \{x_1,\ldots,x_{k-1},x_k\cdot x_{k+1}\}= x_k\cdot \{x_1,\ldots,x_{k-1},x_{k+1}\}+
 \{x_1,\ldots,x_k\}\cdot x_{k+1}; \label{eq:leib_prod}\\
 \{x_1,\ldots,x_{k-1},[x_k, x_{k+1}]\}=(-1)^d\left[\{x_1,\ldots,x_{k-1},x_{k+1}\},x_k\right]+
 \left[\{x_1,\ldots,x_k\}, x_{k+1}\right].
 \label{eq:leib_bracket}
 \end{gather}
  \end{theorem}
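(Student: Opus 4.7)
The plan is to (i) verify the two Leibniz relations geometrically, (ii) show they yield a bimodule map onto $H_*\calB_d^{(k)}$, and (iii) confirm that no relations have been missed.

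\textbf{Step 1 (Leibniz relations).} Both \eqref{eq:leib_prod} and \eqref{eq:leib_bracket} should follow from explicit bounding chains in $\calM_d^{(k)}(k+1)$ analogous to the proof of \eqref{eq:gen_jacob} given above. For \eqref{eq:leib_prod} one may interpolate between a representative of $\{x_1,\ldots,x_k\}\cdot x_{k+1}$ (first $k$ points on the Jacobi sphere, $x_{k+1}$ far away) and a representative of $x_k\cdot\{x_1,\ldots,x_{k-1},x_{k+1}\}$ (with the roles of $x_k$ and $x_{k+1}$ swapped) by sliding $x_{k+1}$ inward while expanding the pair $(x_k,x_{k+1})$; the codimension-one face where $x_k$ and $x_{k+1}$ have come together recovers the LHS $\{x_1,\ldots,x_{k-1},x_k\cdot x_{k+1}\}$. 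For \eqref{eq:leib_bracket} the same construction applies with $x_{k+1}$ orbiting $x_k$ along a small $(d-1)$-sphere in place of the translation. Signs are dictated by the symmetry rule~\eqref{eq:sym_rel}.

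\textbf{Step 2 (surjection).} Let $\calH$ denote the pointed bimodule under $H_*\calB_d$ presented by the generator $\{x_1,\ldots,x_k\}$ subject to the four stated relations. Step~1 provides a well-defined bimodule morphism $\calH\to H_*\calB_d^{(k)}$. Forgetting the right action, $\calH$ contains the left module presented by \eqref{eq:sym_rel} and \eqref{eq:gen_jacob}, which by Theorem~\ref{t:l_mod_main} is already all of $H_*\calB_d^{(k)}$. Hence the morphism is surjective.

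\textbf{Step 3 (injectivity).} The main obstacle is proving that no hidden relations appear. My plan is to use the Leibniz rules, together with symmetry and the bimodule axiom (a right action on a left action redistributes as a left action with right actions inserted into the factors), as rewrite rules that move every right action across to the left. Iterating, any bimodule word on $\{x_1,\ldots,x_k\}$ reduces to an expression in the pointed left module of Theorem~\ref{t:l_mod_main}. The delicate point is to check that this rewriting terminates and is confluent, equivalently that it produces no relations beyond those of Theorem~\ref{t:l_mod_main}. To certify this I would construct a $\Z$-basis of $\calH(n)$ indexed by the Feichtner trees mentioned in the introduction and check, using the Bj\"orner--Welker formula~\cite{BjorWelk,SundWachs}, that its total rank matches that of $H_*\calM_d^{(k)}(n)$; this forces the surjection of Step~2 to be an isomorphism.
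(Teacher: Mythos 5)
Your plan for Steps~1 and~2 captures the spirit of the paper's argument, but Step~3 contains a genuine misconception that would lead you down a much longer road than necessary, and Step~1 uses a geometric construction different from (and vaguer than) the one in the paper.

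On Step~1: the paper verifies the Leibniz relations~\eqref{eq:leib_prod},~\eqref{eq:leib_bracket} not by interpolating inside a Jacobi sphere, but by reusing the homotopy of Proposition~\ref{p:l_act_gener}: take the obvious representing cycle, pull the last point $x_{k+1}$ away to infinity, and read off the boundary contributions coming from the forbidden strata crossed during this homotopy (Examples~\ref{ex:leib_prod_d1},~\ref{ex:leib_prod_d>1},~\ref{ex:leib_brack}). This is systematic and makes the bookkeeping of which strata are hit, and with which signs, transparent. Your proposed interpolation ``sliding $x_{k+1}$ inward while expanding the pair $(x_k,x_{k+1})$'' is plausible but underspecified --- you would need to say exactly which codimension-one walls the resulting chain meets and why it meets each exactly once, and your sketch does not yet do this.

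On Step~3: neither confluence nor a rank count is needed, and pursuing them misreads the logical structure of the proof. Let $\calL$ be the pointed left module presented by the symmetry and generalized Jacobi relations (so $\calL\cong H_*\calB_d^{(k)}$ by Theorem~\ref{t:l_mod_main}), and let $\calH$ be the pointed bimodule presented by all four relations. Forgetting the right action, there is a canonical map of pointed left modules $\calL\to\calH$ sending generator to generator. The Leibniz relations, together with the bimodule distributivity axioms, show that every right action on $\calH$ can be eliminated; this requires only that the rewriting terminate (each step strictly reduces the amount of right action), not that it be confluent. Hence $\calL\to\calH$ is surjective. The composite $\calL\to\calH\to H_*\calB_d^{(k)}$ is the isomorphism of Theorem~\ref{t:l_mod_main}, so $\calL\to\calH$ is also injective, hence an isomorphism, and therefore so is $\calH\to H_*\calB_d^{(k)}$. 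This two-line diagram chase is the whole content of ``no hidden relations''; the Feichtner-tree basis and the Bj\"orner--Welker rank count are unnecessary detours, and building them independently would in any case duplicate much of the work already done in proving Theorem~\ref{t:l_mod_main}.
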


    One can show that~\eqref{eq:leib_prod} implies
  $$
  \{x_1,\ldots,x_{k-1},1\}=0,
  $$
  where 1 is the generator of $\Com(0)=H_0B_d(0)$. Geometrically composition with this element forgets the corresponding disc in the configurations.

 Notice also that in the case $d=1$, the second relation~\eqref{eq:leib_bracket} follows from the first one~\eqref{eq:leib_prod}.

\begin{proof}[Proof of Theorem~\ref{t:bimod_main}] In order to prove this theorem it suffices to prove Theorem~\ref{t:l_mod_main} and also relations~\eqref{eq:leib_prod}, \eqref{eq:leib_bracket}. The latter relations are proved in Examples~\ref{ex:leib_prod_d1}, \ref{ex:leib_prod_d>1}, \ref{ex:leib_brack}.
\end{proof}

%\begin{remark}\label{r:dobr}
%For $d=1$, a similar homology presentation was obtained in~\cite{Dobr3} (see
%Section~5). It uses the ideas from~\cite{PRW} linking homology of diagonal
%arrangements and $Ext$-algebras of monomial rings. In case of field
%coefficients, the $Ext$-algebra corresponding to non-$k$-equal arrangements was
%computed earlier in~\cite{Lemaire}.  These computations were motivated by its application to the homology of loops on fat wedges. See~\cite{Dobr2,Dobr3}, where a similar algebraic structure in the homology of diagonal arrangements is studied. We reiterate that the long brackets $\{x_1\ldots x_k\}$ in this application are interpreted as being induced by Samelson products.
%\end{remark}

Theorem~\ref{t:l_mod_main} follows from Propositions~\ref{p:l_act_gener},~\ref{p:d1_basis},~\ref{p:d>1_basis}.

\begin{proposition}\label{p:l_act_gener}
The cycles obtained by the left action of $H_*\calB_d$ on $H_0\calB_d^{(k)}(1)$ and on $H_{(k-1)d-1}\calB_d^{(k)}(k)$ span the homology of each component $H_*\calB_d^{(k)}(n)$, $n\geq 0$.
\end{proposition}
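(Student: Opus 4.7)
The plan is to exhibit explicit iterated-bracket cycles produced via the left action, and then verify spanning by matching with the known rank of $H_*\calM_d^{(k)}(n)$. For every partition $\pi$ of $\u{n}$ into blocks of size $1$ or $\geq k$, and every tree structure $\tau$ on each non-singleton block whose internal nodes have either $k$ children (``long bracket'') or, when $d\geq 2$, $2$ children (``ordinary bracket''), I build a cycle by forming the iterated bracket prescribed by $\tau$ inside each non-singleton block via the left action of $\{x_1,\ldots,x_k\}$ and of the Lie generator, and then taking the commutative product of these block-cycles with $x_i$ for each singleton. Each such cycle is realized geometrically as a product of spheres inside a nested cluster configuration in $\calM_d^{(k)}(n)$: the deepest internal nodes of $\tau$ correspond to tight clusters of $k$ (or $2$) points rotating around their common centre, the next level assembles these clusters into larger rotating clusters, and so on, exactly as in the examples given at the end of Section~\ref{s:h_ld}.

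To match the count, I would invoke the Goresky-MacPherson formula for the non-$k$-equal arrangement: the rank of $H_*\calM_d^{(k)}(n)$ decomposes as a sum over $\pi\in\Pi_{n,k}\setminus\{\hat 0\}$ of reduced Betti numbers of the order complex of the open interval $(\hat 0,\pi)$, shifted by the codimension of the $\pi$-stratum. Feichtner's quasi-isomorphism of $\Pi_{n,k}$ with a poset of trees identifies each such Betti number with the number of admissible tree structures $\tau$ on the non-singleton blocks of $\pi$, matching the family built above. Spanning then follows by checking that each constructed cycle lies in the $\pi$-stratum and represents the $\tau$-generator of its summand under Goresky-MacPherson; since the total number of classes equals the total rank, they span.

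The main obstacle is this last matching step: identifying each iterated-bracket cycle with the correct generator in the Goresky-MacPherson decomposition requires a careful comparison between the combinatorics of bracket trees and the combinatorics of the order complex of $(\hat 0,\pi)$. A cleaner self-contained alternative, more consistent with the paper's stated style, is to pair these cycles against the forest-indexed cohomology classes built in Sections~\ref{s:coh_space}--\ref{s:multiplicative} and check that the resulting pairing matrix is unimodular. This is essentially the content of the subsequent Propositions~\ref{p:d1_basis} and \ref{p:d>1_basis}, which promote the present spanning statement to an explicit basis; for spanning alone one only needs the rank comparison together with the evident fact that each constructed cycle is non-torsion in its stratum.
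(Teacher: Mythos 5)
Your route is genuinely different from the paper's, and as written it has real gaps.

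The paper's proof is a direct, self-contained induction: given an arbitrary cycle $\alpha$ in $\calM_d^{(k)}(n)$ (or, for $d\geq2$, in the two-colored space $\calM_d^{(k)}(n,m)$), one pushes the last point $x_n$ off to infinity along a fixed direction and examines the resulting $(s+1)$-chain in $\calM_d^{(k)}(n-1,m)\times\R^d$. Its boundary consists of $\alpha$ itself, a term $A\cdot x_n$ at the far end, and transversal intersection terms with the forbidden strata $x_n=y_j$ and $x_n=x_{i_1}=\dots=x_{i_{k-1}}$; these latter terms are products of $\{x_{i_1},\dots,x_{i_{k-1}},x_n\}$ (or $[x_n,y_j]$) with classes on fewer points, so induction expresses $\alpha$ in the image of the left action. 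Nothing about ranks or external models is needed, and the argument works directly over $\Z$ because it writes an arbitrary integral cycle as an explicit $\Z$-combination.

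Your proposal instead constructs a candidate family of bracket cycles and then tries to deduce spanning from a dimension count. Two concrete problems: first, a rank comparison only shows the constructed classes span a full-rank sublattice of $H_*\calM_d^{(k)}(n)$, which over $\Z$ does not imply they span; the ``evident fact that each constructed cycle is non-torsion'' is not sufficient, since a finite-index sublattice can still be proper. You would need to show the pairing matrix against a known dual basis is unimodular, not merely nondegenerate. Second, and more seriously, the ``cleaner alternative'' of pairing against the forest cocycles of Sections~\ref{s:coh_space}--\ref{s:multiplicative} is circular in the paper's architecture: the claim that those cocycles span $H^*\calM_d^{(k)}(n)$ is itself established, in Theorem~\ref{t:coh_space}, by exhibiting them as dual to the homology basis of Proposition~\ref{p:d>1_basis}, which in turn relies on the present Proposition~\ref{p:l_act_gener}. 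To avoid the circle you would have to import a presentation of cohomology from Yuzvinsky or Baryshnikov, contrary to the paper's stated aim of self-containment, or carry out the Goresky--MacPherson/Feichtner matching you yourself flag as the main unresolved obstacle. The paper's push-the-last-point induction sidesteps all of this, proving spanning for an arbitrary cycle without any count of how many cycles there ought to be.
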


The cases $d=1$ and $d\geq 2$ of this proposition are proved in Sections~\ref{s:d1} and~\ref{s:proof_l_act_gener} respectively. The case $d=1$ was  essentially done by Baryshnikov~\cite{Bar}. For $d>1$ the argument is an easy generalization of the case $d=1$. In order to complete the proof of Theorem~\ref{t:l_mod_main} one needs to show that between the cycles produced by this left action there is no other relations besides those that follow from~\eqref{eq:sym_rel},~\eqref{eq:gen_jacob},~\eqref{eq:lie_triv}. This is done by providing an explicit basis of $H_*\calB_d^{(k)}(n)$.

\begin{proposition}\label{p:d1_basis}
The homology $H_*\calB_1^{(k)}(n)$, $k\geq 3$, is torsion free. For its basis one can take the  set whose elements are encoded by partitions $I_0$, $J_1$, $I_1$, $J_2$,$\ldots$, $J_{\ell}$, $I_{\ell}$ of $\underline{n}$, such that $\ell\geq 0$, $| J_s|=k$, $s=1\ldots\ell$,  and $\max(I_s\sqcup J_{s+1})\in J_{s+1}$ for all $s=0,\ldots ,\ell-1$. The homology class corresponding to such partition has the form
$$
A_{I_0}\cdot B_{J_1}\cdot A_{I_1}\cdot B_{J_2}\cdot\ldots \cdot A_{I_{\ell-1}} \cdot B_{J_\ell} \cdot A_{I_{\ell}},
\eqno(\numb)\label{eq:d1_basis}
$$
where $A_{I_s}=x_{i_{1,s}}\cdot x_{i_{2,s}}\cdot\ldots x_{i_{|I_s|,s}}$, $I_s=\{i_{1,s}<i_{2,s}<\ldots<i_{|I_s|,s}\}$, (in case $I_s=\emptyset$, $A_{I_s}=1$ or is simply omitted);
$B_{J_s}=\{x_{j_{1,s}},x_{j_{2,s}},\ldots,x_{j_{k,s}}\}$, $J_s=\{j_{1,s}<j_{2,s}<\ldots<j_{k,s}\}$.
\end{proposition}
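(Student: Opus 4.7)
The plan is to first show that the cycles of form~\eqref{eq:d1_basis} span $H_*\calB_1^{(k)}(n)$ by reducing an arbitrary expression to a normal form via the relations of Theorem~\ref{t:l_mod_main}, and then to establish linear independence (and consequently torsion freeness) by counting the spanning set and matching it with the Bj\"orner--Welker~\cite{BjorWelk} computation of $H_*\calM_1^{(k)}(n)$.

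By Proposition~\ref{p:l_act_gener}, every class in $H_*\calB_1^{(k)}(n)$ is built from the left action of $H_*\calB_1=\Assoc$ on $x_1$ and $\{x_1,\ldots,x_k\}$. Relation~\eqref{eq:lie_triv} forces degree-zero elements to commute, while the generalized Jacobi~\eqref{eq:gen_jacob} eliminates, in one step, any long bracket nested inside another long bracket. Iterating reduces any class to a linear combination of flat words $A_{I_0}B_{J_1}A_{I_1}\cdots B_{J_\ell}A_{I_\ell}$, where consecutive degree-zero factors have been collapsed into the commutative products $A_{I_s}$; the skew symmetry~\eqref{eq:sym_rel} (for $d=1$) then orders the arguments of each $B_{J_s}$ in increasing order of indices, uniquely up to a sign.

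To enforce $\max(I_s\sqcup J_{s+1})\in J_{s+1}$, I would apply the following rewriting whenever $m:=\max I_s>\max J_{s+1}$: pull $x_m$ across $B_{J_{s+1}}$ using $x_m B_{J_{s+1}}=B_{J_{s+1}}x_m+[x_m,B_{J_{s+1}}]$, and expand the commutator via the Jacobi relation~\eqref{eq:gen_jacob} applied to the $(k+1)$-tuple $J_{s+1}\cup\{m\}$; this rewrites $[x_m,B_{J_{s+1}}]$ as a signed sum over $j_i\in J_{s+1}$ of $[x_{j_i},B_{J_{s+1}\cup\{m\}\setminus\{j_i\}}]$. The first (non-commutator) term moves $m$ to $I_{s+1}$, decreasing by one the count of elements of $I_s$ exceeding $\max J_{s+1}$; each commutator term absorbs $m$ into the bracket with some $j_i<m$ extruded to a neighbor, and fixes position $s$ outright. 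A lex induction on the sequence of ``excesses'' $(\#\{x\in I_s:x>\max J_{s+1}\})_{s=0}^{\ell-1}$ then terminates at the desired normal form.

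For linear independence, I would count the sequences $(I_0,J_1,\ldots,J_\ell,I_\ell)$ satisfying the stated constraints, graded by $\ell$ (which corresponds to degree $\ell(k-2)$), and verify that the count matches Bj\"orner--Welker's formula for $\dim H_{\ell(k-2)}\calM_1^{(k)}(n)$; equality of ranks in each degree, combined with spanning by an integral set, forces the set to be a basis and the integral homology to be free. The principal obstacle is the combinatorial bookkeeping in the Jacobi absorption: one must verify that every auxiliary term produced at each step is strictly smaller in the chosen lex order and that the induction terminates uniformly. An alternative route that sidesteps this would be to pair~\eqref{eq:d1_basis} directly with the explicit dual cocycles arising from the forest description of $H^*\calB_d^{(k)}$ developed in Section~\ref{s:coh_space}, but that invokes later material.
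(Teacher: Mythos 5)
Your spanning argument is essentially the paper's: the paper notes that spanning follows from Proposition~\ref{p:l_act_gener} together with relations~\eqref{eq:sym_rel},~\eqref{eq:gen_jacob},~\eqref{eq:lie_triv}, and your lex-order rewriting on the vector of ``excesses'' is a reasonable way to make that reduction explicit. (One stray remark: for $d=1$ the left $\Assoc$-action produces only flat words, so ``long brackets nested inside long brackets'' never arise and there is nothing for Jacobi to flatten; what Jacobi actually does here is exactly the commutator absorption you then describe.) Where you genuinely diverge is linear independence. You propose to enumerate the admissible sequences and compare with the Bj\"orner--Welker Betti numbers; the paper instead produces an explicit dual family of cocycles --- Baryshnikov's locally finite chains $(I_0)[J_1]\cdots[J_\ell](I_\ell)$ with $|J_s|=k-1$ and $\max(I_s\sqcup J_{s+1})\in I_s$, from Section~\ref{s:d1} --- and observes that the intersection pairing with~\eqref{eq:d1_basis} is upper triangular with $\pm1$ on the diagonal. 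That one stroke gives independence, torsion-freeness, and the dual basis (reused later for $H^*\calB_1^{(k)}$), all without quoting outside results. Your route would work mathematically, but it leaves the nontrivial enumeration/comparison as an unverified step, and it imports Bj\"orner--Welker, which the paper deliberately does not rely on. Also, your closing caveat about the forest cohomology ``invoking later material'' is moot: for $d=1$ the dual cocycles used are Baryshnikov's, developed in the same section, not the $k$-forests of Section~\ref{s:coh_space}.
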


It follows from Proposition~\ref{p:l_act_gener} and relations~\eqref{eq:sym_rel},~\eqref{eq:gen_jacob},~\eqref{eq:lie_triv} that any homology class in $H_*\calB_1^{(k)}(n)$ is a linear combination of the elements~\eqref{eq:d1_basis}. In Section~\ref{s:d1} we will produce an explicit set of cohomology classes described by essentially the same combinatorial data such that the pairing matrix with~\eqref{eq:d1_basis} is upper triangular. This proves the linear independence of the elements~\eqref{eq:d1_basis}.

\begin{proposition}\label{p:d>1_basis}
The homology $H_*\calB_d^{(k)}(n)$, $d\geq 2$, $k\geq 3$, is torsion free. For its basis one can take the products of iterated brackets satisfying the following conditions: each factor is either $x_i$, $i\in\underline{n}$, or an iterated bracket of the form
$$
[\ldots[[B_1,B_2],B_3]\ldots B_\ell],\quad \ell\geq 1,
\eqno(\numb)\label{eq:iter_brack}
$$
where each $B_s$ is of the form
$$
B_s=[\ldots[[\{x_{j_{1,s}},x_{j_{2,s}},\ldots,x_{j_{k,s}}\},x_{i_{1,s}}],x_{i_{2,s}}]\ldots x_{i_{\ell_s,s}}],
$$
where $j_{1,s}<j_{2,s}<\ldots<j_{k,s}$; $\ell_s\geq 0$; $i_{1,s}<i_{2,s}<\ldots < i_{\ell_s,s}<j_{k,s}$. Also we require that the smallest index in~\eqref{eq:iter_brack} must appear  in $B_1$.
\end{proposition}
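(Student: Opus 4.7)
The plan is to follow the two-step strategy used for the $d=1$ case: first reduce every homology class to a linear combination of the proposed monomials by applying the known relations, then establish linear independence (and simultaneously torsion-freeness) by pairing against an explicit family of cohomology classes.

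For the spanning step, Proposition~\ref{p:l_act_gener} combined with Theorem~\ref{t:h_ld} shows that $H_*\calB_d^{(k)}(n)$ is generated as an abelian group by arbitrary graded Poisson monomials in the $x_i$'s and the $k$-ary long brackets $\{x_{j_1},\ldots,x_{j_k}\}$. I would reduce such a monomial to the claimed form in six stages: (i)~relation~\eqref{eq:lie_triv} forces any iterated bracket factor with no $\{\cdot\}$ inside to vanish, so every nontrivial bracketed factor involves at least one long bracket; (ii)~the symmetry~\eqref{eq:sym_rel} sorts the indices inside each $\{\cdot\}$; (iii)~the graded Jacobi identity, combined again with~\eqref{eq:lie_triv}, permutes the outer $x_i$'s around a long bracket, hence sorts them in increasing order; (iv)~the generalized Jacobi~\eqref{eq:gen_jacob} rewrites $[x_a,\{x_{j_1},\ldots,x_{j_k}\}]$ as a linear combination of expressions $[x_{j_s},\{\ldots,x_a,\ldots\}]$, allowing me to push the largest index of each block $B_s$ inside the $k$-ary bracket and producing the condition $i_{1,s}<\cdots<i_{\ell_s,s}<j_{k,s}$; (v)~the graded Jacobi identity for the outer binary brackets performs the left-normalization $[\cdots[B_1,B_2],\ldots,B_\ell]$; (vi)~the graded commutativity of the product, together with a Hall-type normalization, selects a unique representative in each product by declaring $B_1$ to be the block containing the overall smallest index of the factor.

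The linear independence step I expect to be the main obstacle. Following the template of the $d=1$ case, I would construct an explicit family of dual cohomology classes, starting from the forest description of $H^*\calB_d^{(k)}(n)$ developed in Sections~\ref{s:coh_space}--\ref{s:multiplicative}. To each proposed homology monomial I would associate a forest in which every factor $B_s$ contributes a tree encoding its $k$-ary bracket (via a $k$-caret on the indices $j_{1,s},\ldots,j_{k,s}$) together with the outer indices $i_{r,s}$ attached in the order dictated by the bracketing, and every isolated $x_i$ contributes an isolated vertex. Evaluating a monomial on its own dual forest should give $\pm 1$ by a product-of-spheres linking computation, while pairing against a forest coming from a combinatorially later type should vanish, either for dimensional reasons (mismatched degrees) or because the underlying set partitions of $\underline{n}$ are incompatible. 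Ordering the basis lexicographically so that such mismatches always lie strictly on one side of the diagonal will yield an upper-triangular pairing matrix with $\pm 1$'s on the diagonal, which simultaneously gives linear independence and torsion-freeness. The principal technical difficulty is matching the orientations of the spheres representing the long brackets with the signs intrinsic to the forest cohomology, and verifying that the lexicographic order really does control the pairing; both issues are handled in the $d=1$ case and the arguments should carry over with only the expected sign modifications depending on the parity of~$d$.
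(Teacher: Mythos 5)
Your proposal follows essentially the same two-step strategy as the paper: spanning is reduced to Proposition~\ref{p:l_act_gener} together with normalization via the relations \eqref{eq:sym_rel}, \eqref{eq:gen_jacob}, \eqref{eq:lie_triv}, and linear independence (plus torsion-freeness) is obtained by exhibiting a dual family of $k$-forest cocycles from Section~\ref{s:coh_space} and checking that the intersection pairing is invertible over~$\Z$. The only cosmetic difference is that the paper chooses its linear $k$-tree forests so that the pairing with the basis of Proposition~\ref{p:d>1_basis} is literally the identity matrix, whereas you import the upper-triangular-with-$\pm 1$-diagonal formulation from the $d=1$ case; both are equally sufficient and the underlying argument is the same.
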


Again it follows from Proposition~\ref{p:l_act_gener} that the elements above span $H_*\calB_d^{(k)}(n)$. To prove that they are linearly independent we produce an explicit dual basis in cohomology, see Section~\ref{s:proof_l_act_gener}.

\begin{corollary}\label{c:wedge}
For any $d\geq 1$, $k\geq 2$, $n\geq 0$, the suspension $\Sigma \calM_d^{(k)}(n)$ is homotopy equivalent to a wedge of spheres.
\end{corollary}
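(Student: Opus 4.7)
The plan is to build an explicit map from a wedge of spheres to $\Sigma\calM_d^{(k)}(n)$ that induces an isomorphism on reduced homology, and then to invoke Whitehead's theorem. The starting input is that $H_*\calM_d^{(k)}(n)$ is torsion free with a basis of products of iterated brackets: for $k\ge 3$ this is Propositions~\ref{p:d1_basis}--\ref{p:d>1_basis}, for $k=2$ and $d\ge 2$ it is F.~Cohen's Theorem~\ref{t:h_ld}, and the sole remaining case $d=1$, $k=2$ is trivial because $\calM_1^{(2)}(n)$ is discrete of cardinality $n!$ and its unreduced suspension is already homotopy equivalent to a wedge of $n!-1$ circles. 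As sketched in Sections~\ref{s:h_ld} and~\ref{s:left_bim}, each basis class $\alpha$ of positive degree arises as the image of the fundamental class under an explicit map
$$
f_\alpha\colon Y_\alpha = S^{a_{\alpha,1}}\times\cdots\times S^{a_{\alpha,r_\alpha}}\longrightarrow \calM_d^{(k)}(n),
$$
in which each long bracket $\{x_{i_1},\ldots,x_{i_k}\}$ contributes an $S^{(k-1)d-1}$ factor, each ordinary bracket an $S^{d-1}$ factor, and $\sum_i a_{\alpha,i}=\deg\alpha$.

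Next, I would iterate the James splitting $\Sigma(X\times Y)\simeq \Sigma X\vee\Sigma Y\vee\Sigma(X\wedge Y)$ to obtain
$$
\Sigma Y_\alpha\simeq \bigvee_{\emptyset\ne S\subseteq\{1,\ldots,r_\alpha\}} S^{1+\sum_{i\in S}a_{\alpha,i}}.
$$
By K\"unneth, the suspension of the fundamental class of $Y_\alpha$ lies in the top wedge summand $S^{\deg\alpha+1}$ indexed by $S=\{1,\ldots,r_\alpha\}$. Precomposing $\Sigma f_\alpha$ with the inclusion of this top sphere produces $g_\alpha\colon S^{\deg\alpha+1}\to\Sigma\calM_d^{(k)}(n)$ whose fundamental class is carried to $\pm\Sigma\alpha$. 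Wedging the $g_\alpha$ over a basis of positive-degree classes yields
$$
G\colon \bigvee_\alpha S^{\deg\alpha+1}\longrightarrow \Sigma\calM_d^{(k)}(n),
$$
which by construction is an isomorphism on reduced homology. Whenever $d(k-1)\ge 2$, the removed arrangement has codimension $\ge 2$ in $\R^{dn}$, so $\calM_d^{(k)}(n)$ is path connected, $\Sigma\calM_d^{(k)}(n)$ is simply connected, and the source of $G$ is also simply connected (each sphere has dimension $\ge 2$); both sides are CW, and Whitehead's theorem upgrades $G$ to a homotopy equivalence.

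The main technical point will be verifying that $g_\alpha$ really hits $\Sigma\alpha$ and nothing else, which comes down to two checks: (i) that the fundamental class of $Y_\alpha$ actually lands in the top James summand of $\Sigma Y_\alpha$, a straightforward K\"unneth calculation; and (ii) that $f_\alpha$ sends the fundamental class of $Y_\alpha$ to $\alpha$ itself rather than to $\alpha$ plus some lower-order correction. Point~(ii) is handled by induction on the depth of the iterated bracket defining $\alpha$: restrictions of $f_\alpha$ to proper sub-products of sphere factors correspond to freezing certain bracket rotations and realize lower-depth sub-brackets, but these contributions sit in strictly smaller wedge summands of $\Sigma Y_\alpha$ under the James splitting and are therefore invisible to the top-sphere piece $g_\alpha$.
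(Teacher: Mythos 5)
Your argument is a careful spelling-out of exactly the principle the paper invokes in one line (torsion-free homology with a basis realized by products of spheres implies the suspension splits as a wedge), using the iterated James splitting and Whitehead's theorem; it is correct and follows the same route. The only thing you slightly over-engineer is your point~(ii): the cycles $\alpha$ are \emph{defined} as images of fundamental classes under the maps $f_\alpha$ from products of spheres, so no induction on bracket depth is needed to see that $g_\alpha$ hits $\pm\Sigma\alpha$.
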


\begin{proof}
This is always true if a space has torsion free homology admitting a basis realized by  products of spheres.
\end{proof}

\begin{remark}\label{r:case_k2}
In the case $k=2$, the homology $H_*\calB_d^{(2)}(\bullet)=H_*\calB_d(\bullet)$ admits a natural decreasing filtration that respects the structure of a bimodule over $H_*\calB_d$. The statements of Theorems~\ref{t:l_mod_main}, \ref{t:bimod_main}, and Propositions~\ref{p:l_act_gener}, \ref{p:d1_basis}, \ref{p:d>1_basis} hold if one replaces $H_*\calB_d^{(2)}$ by the associated graded quotient. This filtration was considered in~\cite{SundWachs}. As Sundaram and Wachs point out,  it is induced by  the Reutenauer derived series filtration in the free Lie algebra~\cite{Reut_char}.
\end{remark}

\section{Case $d=1$}\label{s:d1}

First we prove Proposition~\ref{p:l_act_gener} in case $d=1$. This was implicitly done by Baryshnikov in~\cite{Bar}. We repeat his argument for completeness of exposition. The space $\calB_1^{(k)}(0)$ is a point, $H_*\calB_1^{(k)}(0)\simeq\Z$ which is obtained by the left action of the arity zero component $H_*\calB_1(0)=H_0\calB_1(0)\simeq \Z$. The generator of $H_0\calB_1^{(k)}(0)\simeq H_0\calB_1(0)$ is denoted by~1. We then proceed by induction over $n$. Consider a cycle $[\alpha]\in H_*\calM_1^{(k)}(n)$ and a chain $\alpha$ representing $[\alpha]$.  Consider the projection $p\colon \calM_1^{(k)}(n)\to \calM_1^{(k)}(n-1)$ that forgets the last point in configurations. By a little perturbation one can assume that each simplex of $\alpha$ is smooth and transversal to every fiber of $p$. Define a homotopy $\alpha_t$, $0\leq t\leq c$, of  $\alpha$ in $\calM_1^{(k)}(n-1)\times \R$ by adding $t$ to the last coordinate $x_n$. (In other words we pull the last point $x_n$ to the right for every point in the cycle $\alpha$.) This homotopy viewed as a chain in $\calM_1^{(k)}(n-1)\times \R$ intersects transversely the forbidden fibers -- it happens when $x_n+t$ collides with $x_{i_1}=\ldots=x_{i_{k-1}}$, $1\leq i_1<i_2<\ldots <i_{k-1}\leq n-1$.  To turn $\alpha_t$ into a chain in $\calM_1^{(k)}(n)$ we remove from it intersections with small tubular neighborhoods of the planes
$x_{i_1}=\ldots=x_{i_{k-1}}=x_n$. One get that the boundary of such chain is the sum of $\alpha$ (when $t=0$), a cycle of the form $A\cdot x_n$, where $A\in H_*\calM_1^{(k)}(n-1)$ (when $t=c$), and cycles of
 the form $A_I\cdot\{x_{i_1},x_{i_2},\ldots,x_{i_{k-1}},x_n\}\cdot B_J$, where $A_I\in H_*\calM_1^{(k)}(I)$,
 $B_J\in H_*\calM_1^{(k)}(J)$, $I\sqcup J=\underline{n-1}\setminus \{i_1,i_2,\ldots,i_{k-1}\}$ (such cycles correspond to the part of the boundary appearing from the intersection of $\alpha_t$  with the plane
 $x_{i_1}=\ldots=x_{i_{k-1}}=x_n$).  The set $I$ (respectively $J$) contains the indices $i$ such that $x_i<x_n$ (respectively, $x_i>x_n$). Now using induction we get the result.\footnote{Notice that this recursive
  procedure shows that any cycle of $\calM_1^{(k)}(n)$ is homologous to a linear combination of the basis elements from Proposition~\ref{p:d1_basis}.}  Q.E.D.

\begin{example}\label{ex:leib_prod_d1}
Consider a natural chain representing  the cycle $\{x_1,x_2,\ldots,x_{k-1},x_k\cdot x_{k+1}\}\in H_{k-2}\calM_1^{(k)}(k+1)$. When $x_{k+1}$ is pulled to the right, it can only meet the plane $x_1=x_2=\ldots=x_{k-1}=x_{k+1}$, which produces the cycle $x_k\cdot\{x_1,\ldots,x_{k-1},x_{k+1}\}$. At the other end of the homotopy we get the cycle $\{x_1,\ldots,x_k\}\cdot x_{k+1}$. As a result we get exactly relation~\eqref{eq:leib_prod}.
\end{example}

Now we prove Proposition~\ref{p:d1_basis}. We will exhibit an explicit dual basis in  cohomology.  We reiterate that  it was done in~\cite{Bar} and we give it for completeness of exposition.

For a partition of $\underline{n}$ into a collection of subsets $I_0$, $J_1$, $I_1$, $J_2$,$\ldots$, $I_{\ell-1}$, $J_\ell$, $I_{\ell}$, define a subset of points in $\R^n$ satisfying the following (in)equalities:
\begin{eqnarray}
x_i\leq x_j, & i\in I_s, \,\, j\in J_{s+1};\\
x_j\leq x_i,& j\in J_s,\,\, i\in I_{s};\\
x_{j_1}=x_{j_2},& j_1,j_2\in J_s.
\end{eqnarray}\label{eq:inequalities}
This set or rather its intersection with $\calM_1^{(k)}(n)$ will be denoted by
$$
(I_0)[J_1](I_1)[J_2]\ldots (I_{\ell-1})[J_\ell](I_{\ell}).
\eqno(\numb)\label{eq:d1_chains}
$$
Now let us assume that $|J_s|=k-1$ for all $s=1\ldots\ell$. We get that the boundary of this set (viewed as a locally compact chain) lies in the complement of $\calM_1^{(k)}(n)$. Thus via intersection number it defines a cocycle in $H^*\calM_1^{(k)}(n)$.  In addition assuming the restriction
$$
\max(I_s\sqcup J_{s+1})\in I_s
\eqno(\numb)\label{eq:max_cochain}
$$
we get a collection of cocycles which is exactly a basis dual to~\eqref{eq:d1_basis}.\footnote{To be precise  for an appropriate order of elements the pairing  is given by an upper triangular matrix with $\pm 1$ on the diagonal. We leave it as an exercise to the reader.}  Without the second restriction~\eqref{eq:max_cochain}  (but still assuming $|J_s|=k-1$ for all $s=1\ldots\ell$)  the cocycles~\eqref{eq:d1_chains} are linearly dependent in $H^*\calM_1^{(k)}(n)$.  Baryshnikov shows that all relations are spanned by boundaries of the chains~\eqref{eq:d1_chains}  with all $J_s$ of cardinal $k-1$ except one of cardinal  $k-2$.  Moreover Baryshnikov describes the cohomology algebra $H^*\calM_1^{(k)}(n)$ as being generated by the elements $(I_0)[J_1](I_1)$, $|J_1|=k-1$. The relations are linear appearing as boundary of the elements $(I_0\rq{})[J_1\rq{}](I_1\rq{})$, $|J_1\rq{}|=k-2$; and quadratic: the square of any element $(I_0)[J_1](I_1)$ is zero; and the product of two generators is zero if the intersection of the corresponding locally finite chains in $\calM_1^{(k)}(n)$ is empty.

\section{Proof of Proposition~\ref{p:l_act_gener} for $d\geq 2$}\label{s:proof_l_act_gener}
The proof of Proposition~\ref{p:l_act_gener} for $d\geq 2$ is similar to the case $d=1$. Given a cycle in $\calM_d^{(k)}(n)$ we will homotop it by pulling the last point $x_n$ in the configuration away from the other points. This will lead to a similar recursive construction, but the recursion will be using the homology of slightly more general arrangements. Denote by $\calM_d^{(k)}(n,m)$ the complement in $\R^{d(n+m)}=
\left\{(x_1,\ldots,x_n;y_1,\ldots, y_m)\, |\, x_i\in \R^d,\, y_j\in \R^d\right\}$ to the union of subspaces
$$
x_{i_1}=\ldots=x_{i_k},
$$
for any cardinal $k$ subset $\{i_1,\ldots,i_k\}\subset \underline{n}$,
\begin{eqnarray*}
x_i=y_j,& 1\leq i\leq n,\quad 1\leq j\leq m;\\
y_{j_1}=y_{j_2},& 1\leq j_1< j_2\leq m.
\end{eqnarray*}

The space $\calM_d^{(k)}(n,m)$ is homotopy equivalent to the space $\calB_d^{(k)}(n,m)$ of configurations of $n$ discs labeled by $1,2,\ldots,n$ and colored by $x$, and of $m$ discs labeled by $1,2,\ldots,m$ and colored by $y$, in a unit disc. The {\it non-overlapping condition} is that no $k$ $x$-colored discs have a non-trivial intersection, and all $y$-colored discs are disjoint one from another and from the $x$-discs.

We say that a family of spaces (or vector spaces) $M(n,m)$, $n\geq 0$, $m\geq 0$, is a {\it bi-colored left module}  over an operad $\calO$ if each $M(n,m)$ is acted on by $\Sigma_n\times \Sigma_m$, and one is given structure composition maps:
$$
\calO(\ell)\times M(n_1,m_1)\times M(n_2,m_2) \times\ldots \times M(n_\ell,m_\ell)\to M(n_1+\ldots+n_\ell,m_1+\ldots+m_\ell).
\eqno(\numb)\label{eq:color_compos}
$$
One assumes the easily guessed symmetric group equivariance, associativity, and unity conditions. As example $\calB_d^{(k)}(\bullet,\bullet)$ is a bi-colored left module over $\calB_d(\bullet)$. A similar structure is induced in homology.

\begin{theorem}\label{t:col_lef_main}
For $d\geq 2$, $k\geq 3$, the bi-colored left module $H_*\calB_d^{(k)}(\bullet,\bullet)$ is generated by  $x_1\in H_0 \calB_d^{(k)}(1,0)$, $\{x_1\ldots x_k\}\in H_{(k-1)d-1}\calB_d^{(k)}(k,0)$, and $y_1\in H_0\calB_d^{(k)}(0,1)$. The only relations are \eqref{eq:sym_rel}, \eqref{eq:gen_jacob}, \eqref{eq:lie_triv}.
\end{theorem}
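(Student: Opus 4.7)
Plan: The proof extends the argument from Section~\ref{s:d1} (and its $d\geq 2$ analogue needed for Proposition~\ref{p:l_act_gener}) to the bi-colored situation by induction on $n+m$; the base case $n=m=0$ is a point whose generator is $1=1\cdot 1\in H_0\calB_d(0)$. For the inductive step, take a smooth chain $\alpha$ representing a cycle in $\calM_d^{(k)}(n,m)$ in general position.

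If $m\geq 1$, consider the homotopy $\alpha_t$, $t\in[0,1]$, in the unrestricted ambient space $(\R^d)^{n+m}$ that translates the $y_m$-coordinate along a generic ray, ending at $t=1$ with $y_m$ placed very far from every other point. Because $y_m$ is a $y$-colored point, the only strata of the forbidden locus that $\alpha_t$ can meet transversally are the codimension-$d$ pairwise strata $\{y_m=y_j\}$ and $\{y_m=x_i\}$. Deleting small tubular neighborhoods of these intersections turns $\alpha_t$ into a chain $\tilde\alpha$ in $\calM_d^{(k)}(n,m)$ whose boundary reads
$$
\partial\tilde\alpha=\alpha-\alpha|_{t=1}-\sum_{j<m}[y_j,y_m]\cdot A_j-\sum_{i\leq n}[x_i,y_m]\cdot A'_i,
$$
where the pushed-away cycle $\alpha|_{t=1}$ is, by a standard contraction, the product of a cycle in $\calM_d^{(k)}(n,m-1)$ with the point $y_m$, and each linking term is the left-action image of the $H_{d-1}\calB_d(2)$-class $[\cdot,y_m]$ on the pair (partner point, $y_m$) and on a cycle in $\calM_d^{(k)}(n,m-1)$ obtained by collapsing $y_m$ onto its partner. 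All these cycles have smaller arity so the inductive hypothesis applies. If instead $m=0$, translate $x_n$ in the same fashion; since $d\geq 2$ and $k\geq 3$ the single-collision loci $x_n=x_i$ lie inside $\calM_d^{(k)}$ and are harmless, so the only relevant strata are the $(k-1)$-fold coincidences $\{x_n=x_{i_1}=\cdots=x_{i_{k-1}}\}$ of codimension $(k-1)d$, whose linking $((k-1)d-1)$-spheres realize the long brackets $\{x_{i_1},\ldots,x_{i_{k-1}},x_n\}$ acting on cycles of lower arity. This proves that the three classes generate $H_*\calB_d^{(k)}(\bullet,\bullet)$ as a bi-colored left module.

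To rule out relations beyond \eqref{eq:sym_rel}, \eqref{eq:gen_jacob}, \eqref{eq:lie_triv}, I imitate Proposition~\ref{p:d>1_basis}: construct an explicit basis of $H_*\calB_d^{(k)}(n,m)$ by products of iterated brackets over the labels $\u{n}\sqcup\u{m}$ in which $y$-labels never occur inside a long $k$-bracket, and exhibit a dual system in cohomology analogous to the forest description of Section~\ref{s:coh_space} enriched by $y$-leaves. A rank comparison then identifies this basis with the image of the canonical map from the free bi-colored pointed left module subject only to the three listed relations, forcing that map to be an isomorphism. The main obstacle is the tubular-neighborhood step near the $(k-1)$-fold stratum: one must verify that the normal sphere bundle admits a consistent trivialization so that the resulting boundary contribution really takes the form ``long bracket acting on a lower-arity cycle'' rather than a twisted variant; once this normal-bundle analysis is in place the remaining bookkeeping (and the bi-colored cohomology pairing for injectivity) is conceptually routine.
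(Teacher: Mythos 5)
Your core geometric technique matches the paper's: translate a point away by a linear homotopy, remove tubular neighborhoods of the forbidden strata that the resulting chain meets transversally, and read off the boundary as a recursion expressing $\alpha$ via left-action composites with lower-arity cycles. The organizational difference is that you induct on $n+m$, pulling off $y_m$ whenever $m\geq 1$, whereas the paper inducts only on $n$, always pulling off $x_n$, and takes as base case the isomorphism $\calB_d^{(k)}(0,\bullet)=\calB_d(\bullet)$ (so $H_*\calB_d^{(k)}(0,\bullet)$ is the free left $H_*\calB_d$-module on $y_1$, by Cohen's theorem). The paper's scheme is slightly cleaner because the $y$-slots exist precisely to absorb the new composite discs created by the $(k-1)$-fold collisions of $x$-points; they never need to be pushed off themselves.

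There is, however, one concrete slip in your colour bookkeeping. After a mixed collision $\{x_i=y_m\}$ you place the residual cycle in $\calM_d^{(k)}(n,m-1)$, but the merged disc contains $y_m$ and therefore must inherit the $y$-colour (it may intersect nothing), so the residual actually lives in $\calM_d^{(k)}(n-1,m)$. Compare the paper's recursion for the $x_n=y_j$ collision, which records a residual in $\calM_d^{(k)}(n-1,m)$, and for the $(k-1)$-fold collision, which records a residual in $\calM_d^{(k)}(\underline{n-1}\setminus I,\,m+1)$ --- the number of $y$-points goes \emph{up}. This colour propagation is exactly the reason the bi-colored module is introduced, so getting it wrong is not a cosmetic matter, even though your $n+m$ induction happens to still close (since $(n-1)+m<n+m$). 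Finally, the normal-bundle trivialization that you flag at the end as the ``main obstacle'' is not one: each forbidden stratum is an affine subspace of $\R^{d(n+m)}$, so its normal bundle is canonically trivialized by ambient coordinates, and the linking spheres in this trivialization are precisely the cycles realizing $[\cdot,\cdot]$ and $\{x_1,\ldots,x_k\}$. The injectivity half of your argument (the dual forest basis with $y$-leaves) is left at about the same level of detail as the paper, which defers to the proof of Theorem~\ref{t:l_mod_main}.
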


The theorem above describes the homology of each component $\calB_d^{(k)}(n,m)$ as a space spanned by products of iterated brackets on $x_1,\ldots,x_n,y_1,\ldots,y_m$. The proof of this theorem is very similar to that of Theorem~\ref{t:l_mod_main}. We will only show that the elements obtained by the left action of $H_*\calB_d$ on $x_1$, $y_1$, and $\{x_1\ldots x_k\}$ do span the homology of each component $H_*\calB_d^{(k)}(n,m)$. This will obviously imply Proposition~\ref{p:l_act_gener}.

For $n=0$ the statement is obvious. Indeed, $H_*\calB_d^{(k)}(0,\bullet)$ is isomorphic to $H_*\calB_d(\bullet)$ as a left $H_*\calB_d$-module: it is freely generated by the single element $y_1\in H_0\calB_d^{(k)}(0,1)$.  Now let $\alpha$ be a smooth generic $s$-dimensional chain (by this we mean each simplex is smooth and in generic position) in $\calM_d^{(k)}(n,m)$. We consider the homotopy $\alpha_t$, $0\leq t\leq c$, of $\alpha$ in $\calM_d^{(k)}(n-1,m)\times \R^d$ that only affects the last coordinate $x_n(t)=x_n+t\cdot v$, where the vector $v\in \R^d\setminus \{0\}$ is fixed. When $c$ is big enough $x_n(c)$ will be far away from all the other points $x_1,\ldots,x_{n-1},y_1,\ldots,y_m$ appearing in $\alpha$. The fact that $\alpha$ is generic and smooth garantees  that $\alpha_t$ viewed as an $(s+1)$-chain in $\calM_d^{(k)}(n-1,m)\times \R^d$ is transversal to the forbidden subspaces
$$
x_n=y_j, \quad 1\leq j\leq m;
\eqno(\numb)\label{eq:x=y}
$$
$$
x_n=x_{i_1}=x_{i_2}=\ldots=x_{i_{k-1}},\quad 1\leq i_1< i_2<\ldots< i_{k-1}\leq n-1.
\eqno(\numb)\label{eq:x=x=x}
$$
We remove from $\alpha_t$ intersections with small tubular neighborhoods of the above subspaces. The boundary of the obtained chain is the initial cycle $\alpha$ (when $t=0$), a cycle of the form $A\cdot x_n$, where $A\in H_*\calM_d^{(k)}(n-1,m)$ (this cycle appears at the other end of the homotopy $t=c$), the cycles of the form $A|_{y_j=[y_j,x_n]}$, where $A\in H_*\calM_d^{(k)}(n-1,m)$ (such cycles appear from intersection of $\alpha_t$ with~\eqref{eq:x=y}), and the cycles of the form $A|_{y_{m+1}=\{x_{i_1},\ldots,x_{i_{k-2}},x_{i_{k-1}},x_n\}}$, where $A\in H_*\calM_d^{(k)}(\underline{n-1}\setminus I,m+1)$ and $I=\{i_1,\ldots,i_{k-1}\}$ (such cycles appear from intersection of $\alpha_t$ with~\eqref{eq:x=x=x}).  Using induction hypothesis we   express $\alpha$ as a linear combination of products of iterated brackets. Q.E.D.

\begin{example}\label{ex:leib_prod_d>1}
Consider the cycle $\{x_1,\ldots,x_{k-1},x_k\cdot x_{k+1}\}\in H_{(k-1)d-1}\calM_d^{(k)}(n+1,0)$. While pulling away $x_{k+1}$ one can only meet the plane
$$
x_{k+1}=x_1=x_2=\ldots=x_{k-1},
$$
which produces the cycle $x_k\cdot \{x_1,\ldots,x_{k-1}, x_{k+1}\}$. At the second end we get the cycle $\{x_1,x_2,\ldots,x_k\}\cdot x_{k+1}$. This proves relation~\eqref{eq:leib_prod}.
\end{example}

\begin{example}\label{ex:leib_brack}
Now let us apply the above procedure to the cycle $\{x_1,\ldots,x_{k-1},[x_k, x_{k+1}]\}\in H_{kd-2}\calM_d^{(k)}(n+1,0)$.  While pulling $x_{k+1}$ away one meets the planes
$$
x_{k+1}=x_1=\ldots=\widehat{x_i}=\ldots=x_k,
$$
$i=1\ldots k-1$, which produces the cycles
$$
\left[\{x_1\ldots \widehat{x_i}\ldots x_{k+1}\},x_i\right];
$$
the plane
$$
x_{k+1}=x_1=\ldots=x_{k-1},
$$
which produces the cycle
$$
[x_k,1]\cdot\{x_1,\ldots,x_{k-1},x_{k+1}\}=0.
$$
Also at the other end of the homotopy we get the cycle
$$
\{x_1,\ldots,x_{k-1},[x_k,1]\}\cdot x_{k+1}=0.
$$
As a result we get
$$
\left\{x_1,\ldots,x_{k-1},[x_k, x_{k+1}]\right\}=-\sum_{i=1}^{k-1} (-1)^{(k+1-i)d} \left[\{x_1\ldots \widehat{x_i}\ldots x_{k+1}\},x_i\right].
$$
Applying the generalized Jacobi identity~\eqref{eq:gen_jacob} we get~\eqref{eq:leib_bracket}.
\end{example}

\begin{remark}\label{r:poset_homol}
In the initial work~\cite{BjorWelk} the (co)homology of the poset $\Pi_{n,k}$ was computed recursively by introducing  auxiliary lattices $\Pi_{n,k}(\ell)$. The argument of this section gives a geometric explanation for this combinatorial recursion.
\end{remark}

\section{Cohomology $H^*\calB_d^{(k)}(n)$ as a space of forests}\label{s:coh_space}
Recall that the cohomology of $\calB_d(n)\simeq\calM_d^{(2)}(n)$ is described as a certain space of forests modulo 3-terms relations, see Section~\ref{s:h_ld}. In this section we will give a similar description of
$H^*\calB_d^{(k)}(n)=H^*\calM_d^{(k)}(n)$, $k\geq 3$, as spaces of certain {\it admissible $k$-forests} modulo narural relations. The $k$-forests that span $H^*\calM_d^{(k)}(n)$ have 2 types of vertices: square ones that contain cardinality $(k-1)$ subsets of $\underline{n}$, and round ones that contain only one element  from $\underline{n}$.  Every round vertex must be either disconnected from all the other vertices or connected to a single one that must be square. Every square vertex must be connected to at least one  round one.
Every element from $\underline{n}$ must appear in exactly one vertex of such $k$-forest. By an {\it orientation} of a $k$-forest we will understand
\begin{enumerate}[(a)]
\item orientation of each edge;\label{item:a}
\item ordering elements inside each square vertex;\label{item:b}
\item ordering {\it orientation set} consisting of all the edges (considered as elements of degree $d-1$) and all the square vertices (considered as elements of degree $k(d-2)$) in the $k$-forest.
\label{item:c}
\end{enumerate}

\begin{center}
\begin{figure}[h]
%\psfrag{C2}[0][0][1][0]{$\calC_2$}
%\psfrag{C1}[0][0][1][0]{$\calC_1$}
%\psfrag{C0}[0][0][1][0]{$\calC_0$}
\includegraphics[width=10cm]{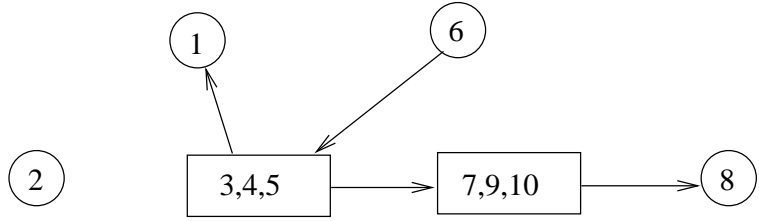}
\caption{Example of an admissible $4$-forest. This forest represents an element in $H^*\calM^{(4)}_d(10)$.}\label{fig1}
\end{figure}
\end{center}

For every such oriented forest $T$, we will assign a locally compact cooriented chain in $\calM_d^{(k)}(n)$, whose boundary lies in the complement of  $\calM_d^{(k)}(n)$. Thus every such chain defines a cocycle in $H^*\calB_d^{(k)}(n)$ (abusing notation it will be also denoted by~$T$), whose degree~$|T|$ is the sum of degrees of the elements in the orientation set.

%For $x=(x^1,\ldots,x^d)\in\R^d$, we will denote by  $x^{\hat 1}=(x^2,x^3,\ldots,x^d)$ the projection of $x$ to $\R^{d-1}$.
By $p_1\colon\R^d\to\R^{d-1}$ we will denote the projection $(x^1,\ldots,x^d)\mapsto (x^2,\ldots,x^d)$.
The chain corresponding to a $k$-forest $T$ is defined as a set determined by the following (in)equalities:
\begin{itemize}
\item If $i$ and $j$ from $\underline{n}$ lie in the same square vertex, then $x_i=x_j$;
\item If two vertices $A$ and $B$ of $T$ are connected by an edge oriented from $A$ to $B$, then for all $i\in A$, $j\in B$, one has $x_i^1\leq x_j^1$ and $p_1(x_i)=p_1(x_j)$.
\end{itemize}

Notice that in particular if $i$ and $j$ from $\underline{n}$ lie in the same connected component of $T$, then $p_1(x_i)=p_1(x_j)$. The data \eqref{item:b}, \eqref{item:c} of the orientation of $T$ determine
 the coorientation of this chain. Notice that each chain is a convex domain of a vector subspace of codimension $|T|$ in $\R^{nd}$.
 %, where $|T|$ is the total degree of the forest computed as the sum of
 %degrees of the elements in the orientation set (each edge contributing $d-1$ and each square vertex contributing $(k-2)d$).
 The coorientation will be given by an explicit map $\R^{nd}\to\R^{|T|}$, where
 $\R^{|T|}$ is the product of $\R^{d-1}$\rq{}s (one copy for each edge) and of $\R^{(k-2)d}$\rq{}s (one
 copy for each square vertex) appearing in the same order as the corresponding elements appear in the
 orientation set of $T$. Given an edge from a vertex $A$ to $B$, we take the first elements $i\in A$ and $j\in B$ (to recall each such set is ordered being either singleton or by the oriention data~\eqref{item:b}).
  The projection $p_{AB}\colon \R^{nd}\to\R^{d-1}$ corresponding to this edge sends
$$
(x_1,\ldots,x_n)\mapsto p_1(x_j-x_i).
$$
 Given a square vertex $A$, whose ordered set of elements is $(i_1,i_2,\ldots,i_{k-1})$, the corresponding projection $p_A\colon \R^{nd}\to\R^{(k-2)d}$ sends
$$
(x_1,\ldots,x_n)\mapsto (x_{i_2}-x_{i_1},x_{i_3}-x_{i_1},\ldots,x_{i_k}-x_{i_1}).
$$

\renewcommand{\labelenumi}{\arabic{enumi}.}
\renewcommand{\labelenumii}{\arabic{enumi}.\arabic{enumii}}

\begin{theorem}\label{t:coh_space}
The cohomology $H^*\calB_d^{(k)}(n)=H^*\calM_d^{(k)}(n)$, $d\geq 2$, $k\geq 3$, $n\geq 0$, has no torsion and can be described as a space spanned by oriented $k$-forests on the index set $\underline{n}$ and quotiented out by the following relations:
 \begin{enumerate}
 \item Orientation relations:
   \begin{enumerate}
   \item Changing the order of the orientation set produces the Koszul sign of permutation;
   \item A permutation $\sigma\in\Sigma_{k-1}$ of elements inside a square vertex produces the sign~$(-1)^{|\sigma|d}$.
   \item Changing orientation of an edge produces the sign $(-1)^d$;
   \end{enumerate}
 \item 3-term relations:
 \begin{center}
%\begin{figure}[h]
%\psfrag{C2}[0][0][1][0]{$\calC_2$}
%\psfrag{C1}[0][0][1][0]{$\calC_1$}
%\psfrag{C0}[0][0][1][0]{$\calC_0$}
\includegraphics[width=13cm]{3T.eps}
%\caption{Example of an admissible $4$-forest. This forest represents an element in $H^*\calM^{(4)}_d(10)$.}\label{fig1}
%\end{figure}
\end{center}
(This picture is local --- we assume that the three forests are identical except for the edges going between the square vertices $A$, $B$, $C$. The numbers on the edges tell in which order the edges appear in the orientation set.)
 \item Relations  dual to the generalized Jacobi:
 $$
 \sum_{\ell=1}^m (-1)^{\ell(d-1)}
 \raisebox{-1.5cm}{
 \psfrag{i1jl}[0][0][1][0]{$i_1i_2\ldots i_{k-2}j_\ell$}
 \psfrag{j1}[0][0][1][0]{$j_1$}
 \psfrag{j2}[0][0][1][0]{$j_2$}
 \psfrag{jl-1}[0][0][1][0]{$j_{\ell-1}$}
 \psfrag{jl+1}[0][0][1][0]{$j_{\ell+1}$}
 \psfrag{jm}[0][0][1][0]{$j_m$}
 \includegraphics[width=7cm]{gen_jacobi.eps}
 }
 =0
 \eqno(\numb)\label{eq:dual_jacobi}
 $$
 (Again this picture is local. The square vertex above may be connected to other square vertices, but not to round ones.)
 \end{enumerate}
\end{theorem}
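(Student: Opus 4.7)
The plan is as follows: verify that each admissible oriented $k$-forest defines a cohomology class; check the listed relations hold; construct cocycles dual to the homology basis of Proposition~\ref{p:d>1_basis}; and conclude via a combinatorial rank count. First I would check that the chain $C_T$ associated to an admissible oriented $k$-forest $T$ is a cocycle. This chain is cut out in $\calM_d^{(k)}(n)$ by a convex polyhedral subset of a coordinate subspace: square vertices impose $x_i=x_j$ equalities, and edges impose $p_1(x_i)=p_1(x_j)$ together with the half-space $x_i^1\leq x_j^1$. Admissibility (each square vertex has exactly $k-1$ elements and every round vertex either sits alone or attaches only to a square vertex) guarantees that the boundary of $C_T$ lies entirely in the arrangement complement: whenever an edge inequality degenerates, the two endpoints coincide, and merging a $(k-1)$-element square with any neighbor creates at least $k$ coinciding points. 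Hence $T$ represents a well-defined class in $H^{|T|}\calM_d^{(k)}(n)$.

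Next I would verify the three families of relations. The orientation relations (1) are automatic from how the coorientation is assembled out of the projections $p_{AB}$ and $p_A$ under the Koszul sign rule. For the 3-term relation (2) I would consider the ``relaxed'' chain in which the $p_1$-coordinates are forced equal across the three vertices $A$, $B$, $C$ but no inequality on first coordinates is imposed; its boundary decomposes according to the three possible linear orderings of $x_A^1,x_B^1,x_C^1$, and after sign bookkeeping reproduces the signed sum in the picture. For the dual Jacobi relation (3) I would take the \emph{non}-admissible $k$-forest with only $k-2$ indices in the central square vertex but otherwise identical to the admissible ones in the sum; its associated chain is still locally compact in $\calM_d^{(k)}(n)$ (after absorbing any single $j_\ell$ only $k-1$ points coincide), and its boundary is precisely the alternating sum appearing in \eqref{eq:dual_jacobi}. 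The main delicacy in this step is carrying through the Koszul signs so as to match exactly.

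To show that the forest cocycles span $H^*\calM_d^{(k)}(n)$ and are linearly independent modulo (1)--(3), I would construct an explicit dual basis for the homology basis of Proposition~\ref{p:d>1_basis}. To a product of iterated brackets $\prod B_s\cdot\prod x_{i_r}$ of the form displayed there, I assign the canonical forest whose $s$-th connected component has a square vertex $\{j_{1,s},\ldots,j_{k-1,s}\}$ joined by a path of round vertices through $j_{k,s},i_{1,s},\ldots,i_{\ell_s,s}$ (with planar layout dictated by the index ordering), together with isolated round vertices for the free factors $x_{i_r}$. A careful global ordering of the orientation set, governed by the minima of the components, should make the intersection-number pairing matrix with the homology basis upper-triangular with $\pm 1$ diagonal entries. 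This triangularity immediately implies torsion-freeness of $H^*\calM_d^{(k)}(n)$ and linear independence of the canonical forests.

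The main obstacle, and final step, is proving that the relations (1)--(3) are \emph{complete}: every admissible oriented $k$-forest can be reduced modulo them to a $\Z$-linear combination of the canonical forests above. Here the 3-term relation normalizes the planar structure of square--square edges within each connected component, while the dual Jacobi relation normalizes which $k-1$ indices occupy each square vertex (by pushing the smallest index in a component into the square, say). I would introduce a termination function -- for instance, a lexicographic complexity based on tree shape plus the vectors of indices sitting in each square vertex -- and show that the two relations strictly decrease it, so the reduction algorithm halts uniquely. This bounds the rank of the quotient by that of the homology basis; combined with the pairing non-degeneracy from the previous paragraph, the two ranks coincide, the pairing is perfect, and no further relations are possible.
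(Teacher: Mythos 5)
Your overall strategy (construct locally finite chains, verify the relations, build a forest basis dual to the homology basis from Proposition~\ref{p:d>1_basis}, and compare ranks) matches the paper's, and your treatment of relations (1) and (3) and the boundary interpretation of the dual-Jacobi chain is essentially what the paper does. However, there is a concrete error in your third paragraph that unravels the linear-independence step.

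You assign, to each basis element of Proposition~\ref{p:d>1_basis}, a forest with \emph{one} square vertex per connected component. But in Proposition~\ref{p:d>1_basis} a non-singleton factor is a left-nested bracket $[\ldots[[B_1,B_2],B_3]\ldots B_\ell]$ of $\ell\geq 1$ sub-blocks, each $B_s$ containing its own long bracket $\{x_{j_{1,s}},\ldots,x_{j_{k,s}}\}$. The dual forest component therefore must carry $\ell$ square vertices, and because the outer bracket is left-nested they must be joined along a \emph{path} of square--square edges; the paper's ``linear $k$-trees'' capture exactly this caterpillar shape, with each square vertex $A_s$ carrying its own fan of round leaves. Your proposed dual forests account only for the case $\ell=1$, so the collection you describe is too small: it cannot pair as an upper-triangular matrix against the full homology basis (the degrees already mismatch once $\ell\geq 2$, since a component with $\ell$ square vertices has a different codimension than one with a single square vertex). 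This is not a sign-bookkeeping issue but a missing combinatorial layer in the dual basis, so the rank comparison at the end of your argument does not close.

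Two smaller remarks. First, your reading of round vertices as forming ``a path'' is inconsistent with admissibility: round vertices have degree at most one, so they attach directly to a square vertex rather than chaining to one another; this is minor but it shows the tree shape you have in mind is not the admissible one. Second, for the 3-term relation the paper argues by decomposing the chain of one forest into a disjoint union of two according to the relative order of $x^1$-coordinates, rather than by taking a boundary of a ``relaxed'' chain; your boundary language is not quite the right mechanism there, though the decomposition idea underneath it is. On the positive side, your proposed termination/complexity argument for the completeness of the relations is a genuine refinement: the paper leaves that step implicit, deducing everything from the pairing with the homology basis. If you repair the dual-basis construction to incorporate the path of square vertices, the termination argument you sketch would be a welcome way to make the completeness step explicit.
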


 \begin{proof}
 First let us check that the cocycles corresponding to $k$-forests satisfy all the relations above. Relations (1.1) and (1.2) appear as a change of coorientation. To see (1.3) one notices that changing orientation of an edge produces a different chain: instead of inequality $x_i^1\leq x_j^1$ one would have $x_i^1\geq x_j^1$. Up to a sign $(-1)$ these two chains are homologous. Also their coorientation differs by $(-1)^{d-1}$. Thus the total sign contribution is $(-1)\cdot (-1)^{d-1}=(-1)^d$.

 Relation (2) is equivalent to
 \begin{center}
%\begin{figure}[h]
%\psfrag{C2}[0][0][1][0]{$\calC_2$}
%\psfrag{C1}[0][0][1][0]{$\calC_1$}
%\psfrag{C0}[0][0][1][0]{$\calC_0$}
\includegraphics[width=12cm]{3T_altern.eps}
%\caption{Example of an admissible $4$-forest. This forest represents an element in $H^*\calM^{(4)}_d(10)$.}\label{fig1}
%\end{figure}
\end{center}

 But the chain representing the left-hand side is exactly the union of the chains from the right-hand side.

   Relation (3) appears as the boundary of a similar chain that can be described by a similar forest one of whose square vertices has $k-2$ elements:
\begin{center}
\psfrag{i1jl}[0][0][1][0]{$i_1i_2\ldots i_{k-2}$}
 \psfrag{j1}[0][0][1][0]{$j_1$}
 \psfrag{jm}[0][0][1][0]{$j_m$}
 \includegraphics[width=4cm]{chain.eps}
\end{center}

 \begin{remark}\label{r:sq_vert_no_round}
 Relation (3) makes sense for $m=1$. In other words if we allow $k$-forests with square vertices not-attached to any round vertex, then the corresponding cocycles are zero in cohomology. This will be important in the next section where we will be studying the multiplicative structure of $H^*\calM_d^{(k)}(n)$.
 \end{remark}

 To finish the proof of Theorem~\ref{t:coh_space} we have to show that our $k$-forests cocycles span the entire cohomology and that there is no other relations. We will prove it by providing an explicit basis (in the space of such forests) that will be dual to the basis in homology described by Proposition~\ref{p:d>1_basis}.
 The fact that the intersection pairing is given by an identity matrix will finish the proof of Proposition~\ref{p:d>1_basis} as well. Our basis elements will be forests whose all components are either singletons or {\it linear $k$-trees}:
\begin{center}
\psfrag{A1}[0][0][1][0]{$A_1$}
 \psfrag{A2}[0][0][1][0]{$A_2$}
 \psfrag{A3}[0][0][1][0]{$A_3$}
 \psfrag{As}[0][0][1][0]{$A_s$}
 \includegraphics[width=8cm]{linear_tree.eps}
\end{center}

 For a component $T_0$ as above we will require the following: the elements inside each square vertex appear in their natural linear order. The round vertices attached to every square vertex  also appear in their linear order. The last round vertex attached to $A_i$ is greater than the last element inside $A_i$. The minimal element in $T_0$ appears either inside $A_1$ or as a round vertex attached to $A_1$.

 We leave it as an exercise to the reader that the intersection matrix between the locally finite cycles corresponding to the aforementioned collection of $k$-forests and the cycles from Proposition~\ref{p:d>1_basis} is identity.  Otherwise the reader might wait until Section~\ref{s:duality} where the duality between the homology and cohomology is described more explicitly.

 \end{proof}

 \section{Multiplicative structure in cohomology}\label{s:multiplicative}
 In the previous section we described $H^*\calB_d^{(k)}(n)=H^*\calM_d^{(k)}(n)$ as a space spanned by certain $k$-forests. We will now describe the product which is essentially given by a superposition of such forests. The theorem below makes this statement more precise.

 \begin{theorem}\label{t:product}
 The product of two $k$-forest cocycles $T_1,T_2\in H^*\calM_d^{(k)}(n)$ is zero in cases (1)-(3) below. Otherwise it is a sum of $k$-forests as defined by (4)-(5).
 \begin{enumerate}[(1)]
 \item If there exists at least one square vertex $A$ in $T_1$ and one square vertex $B$ in $T_2$ such that $A\cap B\neq \emptyset$, then $T_1\cdot T_2=0$.

     \noindent (In (2)-(5) below we are assuming that all square vertices of $T_1$ are disjoint from those of $T_2$. In such situation one can define a superposition of two forests denoted by $T_1\cup T_2$.)

 \item In case $T_1\cup T_2$ has cycles then $T_1\cdot T_2=0$.

 \item In case $T_1\cup T_2$ has a square vertex without any round vertex attached then $T_1\cdot T_2=0$.

 \item If  $T_1\cup T_2$ is an admissible $k$-forests then $T_1\cdot T_2=T_1\cup T_2$, whose orientation set is obtained by concatenation of two orientation sets.

 \item It might happen that $T_1\cup T_2$ has one or several round vertices of valence 2. In such case one has to use the 3-term relations as follows in order to write $T_1\cdot T_2$ as a sum of admissible $k$-forests:
     \begin{center}
%\begin{figure}[h]
%\psfrag{C2}[0][0][1][0]{$\calC_2$}
%\psfrag{C1}[0][0][1][0]{$\calC_1$}
%\psfrag{C0}[0][0][1][0]{$\calC_0$}
\includegraphics[width=12cm]{3T_circle.eps}
%\caption{Example of an admissible $4$-forest. This forest represents an element in $H^*\calM^{(4)}_d(10)$.}\label{fig1}
%\end{figure}
\end{center}

 \end{enumerate}
 \end{theorem}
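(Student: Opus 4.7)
The approach is Poincaré-Lefschetz duality on the open manifold $\calM_d^{(k)}(n)$: the cup product of two cocycles represented by cooriented locally finite chains equals the transverse intersection of those chains, with coorientation given by concatenation. Since the forest chain $C_T$ is cut out by the equalities $x_i=x_j$ for $i,j$ inside the same square vertex, $p_1(x_i)=p_1(x_j)$ along each edge, together with the inequalities $x_i^1\leq x_j^1$ along oriented edges, the raw set-theoretic intersection $C_{T_1}\cap C_{T_2}$ is the solution of the combined system, and transversality reduces to linear independence of the combined set of equalities.

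I would first treat case (4), the generic situation. If $T_1\cup T_2$ is already an admissible $k$-forest, then the defining equalities of $T_1$ and of $T_2$ are jointly independent: no cycles ensures the $p_1$-equations remain independent, and disjoint square vertices ensure the $x_i=x_j$ equations do. Hence $C_{T_1}$ and $C_{T_2}$ meet transversely in $C_{T_1\cup T_2}$. The concatenation of the two coorientation maps $\R^{nd}\to\R^{|T_1|}$ and $\R^{nd}\to\R^{|T_2|}$ is precisely the coorientation of $C_{T_1\cup T_2}$ whose orientation set is the concatenation of the two orientation sets, matching the claim.

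The three vanishing cases are failures of this generic picture, each handled separately. For (1), if square vertices $A\subseteq T_1$ and $B\subseteq T_2$ share at least one index, either $A=B$ (in which case the two chains share the codimension-$(k-2)d$ diagonal $x_{i_1}=\cdots=x_{i_{k-1}}$, so the intersection has codimension strictly less than $|T_1|+|T_2|$ and vanishes after a generic perturbation) or $|A\cup B|\geq k$ (so the intersection lies on the forbidden locus of $\calM_d^{(k)}(n)$ and is empty). For (2), a cycle in $T_1\cup T_2$ makes one of the $p_1$-equations a consequence of the others; again the codimension drops, forcing the product to vanish. For (3), a square vertex of $T_1\cup T_2$ with no attached round vertex represents the zero class in cohomology by Remark~\ref{r:sq_vert_no_round}, which is the $m=1$ specialization of relation~\eqref{eq:dual_jacobi}.

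Finally, in case (5) the only failure of admissibility is one or more round vertices of valence $2$, each such $v$ being attached to one square vertex from $T_1$ and one from $T_2$. Geometrically $C_{T_1}\cap C_{T_2}$ is still a single convex domain, but it is not itself a forest chain because the constraints at $v$ couple two square vertices through the ordering of the $x^1$-coordinates rather than by a direct edge. Applying the 3-term relation locally at each such $v$ decomposes the intersection into a signed sum of admissible forest chains, which is precisely the picture displayed in (5). The main obstacle I anticipate is sign bookkeeping: one must verify that the signs arising from the 3-term relation, from concatenation of coorientations, and from the generic perturbations used in (1)--(2) all fit together consistently, which in turn hinges on choosing correct reference orderings of orientation sets throughout. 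Completeness (i.e.\ that no further multiplicative relations appear) follows from pairing the putative product formula against the explicit dual basis constructed in the proof of Theorem~\ref{t:coh_space}.
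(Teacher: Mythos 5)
Your approach is essentially the paper's: interpret each $k$-forest cocycle as a cooriented locally finite chain and compute cup products by transverse intersection, with (4) the transversal generic case, (1) and (2) vanishing because the set-theoretic intersection either falls into the forbidden locus or can be made empty, (3) handled by Remark~\ref{r:sq_vert_no_round}, and (5) by decomposing the intersection chain through the 3-term relation. Cases (1), (3), (4), (5) match the paper's argument almost verbatim.

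The one place you deviate is case (2). The paper dispatches it by exploiting orientation relation~(1.3): one reorients the edges around the cycle so that the inequalities $x_i^1\leq x_j^1$ around the loop force equality, and then the $p_1$-equalities collapse two (or more) disjoint square vertices to a common point, landing the intersection in the forbidden locus of $\calM_d^{(k)}(n)$ -- so the intersection is literally empty, no perturbation needed. You instead observe that the $p_1$-equations become dependent and conclude ``the codimension drops, forcing the product to vanish.'' As stated that step is a gap: excess intersection by itself does not force the cohomology product to vanish. What rescues it is the perturbation argument you gesture at later -- after a generic translation of $C_{T_2}$ the dependent $p_1$-equations around the cycle become inconsistent, so the perturbed chains are disjoint. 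You should make that step explicit; once you do, your route through (2) is valid and parallel to, though slightly less slick than, the orientation trick. The sign bookkeeping you flag as a remaining concern is genuine but mechanical, and is the same bookkeeping the paper leaves implicit.
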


 \begin{proof}
 (1) In case $A\neq B$, the intersection of the chains corresponding to $T_1$ and $T_2$ is empty in $\calM_d^{(k)}(n)$. In case $A=B$, one can slightly move one of the chains to get an empty intersection.

 (2) One can choose an orientation of edges in $T_1$ and $T_2$ so that the intersection of the corresponding chains is empty.

 (3) See Remark~\ref{r:sq_vert_no_round}.

 (4) The chain corresponding to $T_1$ and $T_2$ are transversal one to another and their intersection is exactly the chain corresponding  to $T_1\cup T_2$.

  (5) Same as proof of relation~2 in Theorem~\ref{t:coh_space}.
\end{proof}

 The theorem below describes $H^*\calB_d^{(k)}(n)$ as a  quadratic algebra. For a pair of vertices $A$ and $B$ of a $k$-forest joined by an edge, we will agree to denote this edge either by $(A,B)$ or by $(i,j)$, where $i$ is any element in $A$, and $j$ any element in $B$.

 \begin{theorem}\label{t:quadratic}
 The algebra $H^*\calM_d^{(k)}(n)$ is generated by the forests that have only one square vertex. (Therefore only one of their components is not a singleton.) The relations are as follows:
 \begin{enumerate}[(1)]
 \item Linear relations in the space of generators as described by (1) and (3) from Theorem~\ref{t:coh_space}.

 \item $T_1\cdot T_2=0$ if the square vertex of $T_1$ is not disjoint from that of $T_2$ (in particular $(T_1)^2=0$).

 \item $T_1\cdot T_2=0$ if $T_1\cup T_2$ has cycles.

 \item $T_1\cdot T_2=0$ if $T_1\cup T_2$ has a square vertex without any round vertex attached. (This can happen if the square vertex of one of the forests has only one round vertex attached and which belongs to the square vertex of the second forest.)

 \item Let $i$ belong to a square vertex of $T_1$ and $j$ belong to a square vertex of $T_2$ also assume that the edge $a=(i,j)$ belongs to $T_1$. Then one has a relation $T_1\cdot T_2=(T_1\setminus a)\cdot (T_2\cup a)$, where $T_1\setminus a$ is a forest obtained from $T_1$ by removing $a$, and $T_2\cup a$ is obtained from $T_2$ by adding $a$:
\begin{center}
%\begin{figure}[h]
%\psfrag{C2}[0][0][1][0]{$\calC_2$}
%\psfrag{C1}[0][0][1][0]{$\calC_1$}
%\psfrag{C0}[0][0][1][0]{$\calC_0$}
\includegraphics[width=6cm]{quadr_edge.eps}
%\caption{Example of an admissible $4$-forest. This forest represents an element in $H^*\calM^{(4)}_d(10)$.}\label{fig1}
%\end{figure}
\end{center}
 The sign is positive assuming that the edge $a$ is the last element in the orientation set of $T_1$ and the first element in the orientation set of $T_2\cup a$.

 \item If $T_1\cup T_2$ happens to have a bivalent round vertex, one gets a quadratic relation that one can draw as follows:
\begin{center}
%\begin{figure}[h]
%\psfrag{C2}[0][0][1][0]{$\calC_2$}
%\psfrag{C1}[0][0][1][0]{$\calC_1$}
%\psfrag{C0}[0][0][1][0]{$\calC_0$}
\includegraphics[width=12cm]{quadr_3T.eps}
%\caption{Example of an admissible $4$-forest. This forest represents an element in $H^*\calM^{(4)}_d(10)$.}\label{fig1}
%\end{figure}
\end{center}
\end{enumerate}
\end{theorem}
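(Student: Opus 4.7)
The plan is to prove Theorem~\ref{t:quadratic} in three steps: generation, verification, and completeness.

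First, I would show that the single-square-vertex forests generate the algebra, by induction on the number of square vertices. Given an admissible $k$-forest $T$ with $s\geq 2$ square vertices, pick one non-singleton connected component $T^\circ$ of $T$; one can write $T$ as the product (in the sense of Theorem~\ref{t:product}(4)) of two admissible forests, one consisting of $T^\circ$ together with the elements of $\underline{n}\setminus T^\circ$ as isolated rounds, and the other consisting of $T\setminus T^\circ$ together with the elements of $T^\circ$ as isolated rounds. The square vertices of the two factors are disjoint, the superposition has no cycles and no orphan squares, and the superposition is exactly $T$; so the product rule (4) of Theorem~\ref{t:product} indeed recovers $T$. Iterating, any admissible forest is a product of forests each with one connected non-singleton component, i.e.\ of the generators in the theorem.

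Second, I would verify each of the six listed relations. Relations (2), (3), (4), and (6) are direct rephrasings of Theorem~\ref{t:product}(1), (2), (3), and (5) respectively; relation (1) is the restriction of the orientation and dual-Jacobi relations of Theorem~\ref{t:coh_space} to single-square-vertex forests, which hold in cohomology a fortiori. The only relation requiring fresh input is the edge-moving relation (5). For this I would argue that the two products $T_1\cdot T_2$ and $(T_1\setminus a)\cdot(T_2\cup a)$ produce the same superposition: in $T_1\cup T_2$ the round vertex of $T_1$ containing $j$ and the square vertex $B$ of $T_2$ must be identified (since every element of $\underline{n}$ lives in a unique vertex of the superposition), which makes $a$ into an edge from $A$ to $B$; in $(T_1\setminus a)\cup(T_2\cup a)$ the edge $a$ lives in the second factor and runs from a round-$i$ (now identified with the square vertex $A$ of $T_1\setminus a$) to $B$. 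The coorientation signs match by direct bookkeeping of the orientation sets under the stated convention that $a$ is placed last in $T_1$ and first in $T_2\cup a$.

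Third, I would establish that no further relations are needed. Let $Q_n$ denote the quadratic algebra presented by the generators and relations (1)--(6); by Steps 1 and 2 the natural map $Q_n \twoheadrightarrow H^*\calM_d^{(k)}(n)$ is well-defined and surjective. To prove injectivity, I would reduce an arbitrary product of generators to a linear combination of monomials that correspond to admissible $k$-forests: relations (2) and (4) annihilate degenerate superpositions; relation (3) kills products whose union has cycles; relation (5) is used to put each edge into the factor whose square vertex is incident to it, giving a canonical placement; relation (6) rewrites any bivalent round vertex in terms of admissible $k$-forests via the 3-term identity. What remains after reduction lies in the span of admissible $k$-forests subject exactly to the orientation, 3-term, and dual-Jacobi relations of Theorem~\ref{t:coh_space}. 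The main obstacle will be Step~3, and within it the compatibility check that the global 3-term relation of Theorem~\ref{t:coh_space}(2) involving three square vertices $A,B,C$ is a consequence of (5) and (6); this should reduce to the local statement that moving the single edge between the three squares through the three allowed positions (using (5)) agrees with the bivalent-round 3-term relation (6) in the presence of auxiliary rounds pinning the vertices together.
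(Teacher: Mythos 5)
The paper's proof of Theorem~\ref{t:quadratic} is essentially a two-sentence assertion that the generation and the equivalence of relations are clear given Theorems~\ref{t:coh_space} and~\ref{t:product}, so your three-step plan is a reasonable and faithful expansion of the intended argument rather than a different route. However, Step 1 as you have written it contains a real gap: you factor an admissible forest into connected components and then declare that forests with ``one connected non-singleton component'' are the generators, but the generators are forests with a \emph{single square vertex}, which is strictly stronger --- a tree can contain several square vertices. You need a further induction inside a component: pick a leaf square vertex $A$ of the tree (one adjacent to exactly one other square vertex $B$ via an edge $a$), take $T_1$ to be the single-square forest consisting of $A$, its round vertices, and the edge $a$ ending at a round vertex labeled by some element $j\in B$, and take $T_2$ to be the complementary piece (which is again admissible because $B$, being a square vertex of an admissible forest, has at least one round vertex attached). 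Then $T_1\cdot T_2$ reconstructs the component by rule (4) of Theorem~\ref{t:product}, and induction on the number of square vertices finishes generation.

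Your Steps 2 and 3 are sound in outline, and you are right that the only non-mechanical point is showing that the three-square 3-term relation of Theorem~\ref{t:coh_space}(2) is a consequence of the quadratic relations (5) and (6); this is exactly the content the paper sweeps under ``vice versa.'' A concrete way to close that loop is to factor the relevant forest with $T_B$ and $T_C$ split off as single-square factors and the edges between $A,B,C$ attached to the appropriate factor; then relation (5) moves the shared edge between factors, and relation (6) handles the case where two of the three edges create a bivalent round vertex in the superposition. Spelling that computation out (with the sign bookkeeping) would make Step 3 complete.
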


\begin{proof}
It is clear that any admissible $k$-forest can be obtained as a product of generators as above. It is also straightforward that relations from Theorem~\ref{t:coh_space} follow from relations  above and vice versa.
\end{proof}

\begin{remark}\label{r:case_k2_coh}
For the case $k=2$, Theorems~\ref{t:coh_space}, \ref{t:product}, \ref{t:quadratic} and also the duality between the homology and cohomology described in the next section, still hold if one replaces $H^*\calM_d^{(2)}(n)$ with a natural associated graded quotient, see Remark~\ref{r:case_k2}.
\end{remark}

\section{Duality between homology and cohomology}\label{s:duality}
So far we described the homology $H_*\calM_d^{(k)}(n)$, $d\geq 2$, $k\geq 3$, as a certain space spanned by products of iterated brackets, where each such product of brackets is a cycle realized by products of spheres in $\calM_d^{(k)}(n)$. We also described the cohomology  $H^*\calM_d^{(k)}(n)$, $d\geq 2$, $k\geq 3$, as a space spanned by admissible $k$-forests, where each forest is a cocycle realized via intersection number with certain locally finite chain. In this section we will describe how the aforementioned cycles pair with the cocycles or in other words how the cycles (realized by products of spheres) intersect with the locally finite chains described in Section~\ref{s:coh_space}. A similar duality for $\calM_d^{(2)}(n)$ is well known~\cite{Tur_other,Sinha_pair}. Notice that in top degree $H_*\calB_d(\bullet)$ is the operad of graded Lie algebras with bracket of degree $(d-1)$. Thus $H^*\calB_d(\bullet)$ in top degree is the Lie cooperad whose components are explicitly described as spaces of trees quotiented out by 3-term relations, see Section~\ref{s:h_ld}.  Such description of the Lie cooperad is important in its application to the rational homotopy theory~\cite{SinhaWalt1,SinhaWalt2}. Also it was used to prove the formality of the operad of little discs~\cite{Kontsevich,LV}.

Let $\calF_d^{(k)}(n)$ denote the space of admissible $k$-forests from Theorem~\ref{t:coh_space} modulo  only orientation relations~(1).  Then  $H^*\calM_d^{(k)}(n)$ is $\calF_d^{(k)}(n)$ quotiented out by the subspace $\calR_d^{(k)}(n)\subset \calF_d^{(k)}(n)$ spanned by relations (2) and (3):
$$
H^*\calM_d^{(k)}(n)=\left. \calF_d^{(k)}(n)\right/ \calR_d^{(k)}(n).
$$
The space $\calF_d^{(k)}(n)$ is naturally self-dual by defining  its basis set (of admissible $k$-forests) to be orthonormal. The homology $H_*\calM_d^{(k)}(n)$ is dual to $H^*\calM_d^{(k)}(n)$ and can be described as the subspace $\left(\calR_d^{(k)}(n)\right)^\bot\subset\calF_d^{(k)}(n)$. We will describe explicitly this isomorphism
$$
\Psi_n\colon H_*\calM_d^{(k)}(n)\to \left(\calR_d^{(k)}(n)\right)^\bot,
$$
which in fact encodes the pairing as
$$
\Psi_n(B)=\sum_T\langle T,B\rangle\cdot T.
$$
Here the sum is taken over the basis set of $\calF_d^{(k)}(n)$.

For simplicity of notation we will be omitting the subscript $n$. This map $\Psi$ can be described recursively. First we define $\Psi(1)$ as the empty graph and
$$
\Psi(x_i)=\raisebox{-.2cm}{\includegraphics[width=.6cm]{single.eps}}
,
$$
where the right-hand side is the forest with only one vertex. We also define
$$
\Psi(\{x_{i_1}\ldots x_{i_k}\})=\sum_{\ell=1}^k (-1)^{(\ell-1)d}
\raisebox{-1cm}{
\psfrag{i1jl}[0][0][1][0]{$i_1\ldots \widehat{i_\ell} \ldots i_k$}
 \psfrag{il}[0][0][1][0]{$i_\ell$}
 %\psfrag{jm}[0][0][1][0]{$j_m$}
 \includegraphics[width=2.5cm]{long_dual_marked.eps}
 }
.
$$
The numbers 1 and 2 above describe the order in which the corresponding elements appear in the orientation set. This identity means that the spherical cycle $\{x_{i_1}\ldots x_{i_k}\}\in\calM_d^{(k)}(\{i_1\ldots i_k\})$ intersects each chain \raisebox{-1cm}{
\psfrag{i1jl}[0][0][1][0]{$i_1\ldots \widehat{i_\ell} \ldots i_k$}
 \psfrag{il}[0][0][1][0]{$i_\ell$}
 %\psfrag{jm}[0][0][1][0]{$j_m$}
 \includegraphics[width=2.2cm]{long_dual.eps}
 } exactly once, and $(-1)^{(\ell-1)d}$ is the sign of intersection.\footnote{At this point we need to fix orientation of the sphere $\{x_1\ldots x_k\}\in H_{(k-1)d-1}\calM_d^{(k)}(n)$ in order to make this pairing work.} Then if $B$ happens to be a product $B=B_1\cdot B_2$, we get
$$
\Psi(B_1\cdot B_2):=\Psi(B_1)\sqcup\Psi(B_2).
$$
If $B=[B_1,B_2]$ and neither $B_1$ nor $B_2$ is a singleton we get
$$
\Psi([B_1,B_2])=\sum_{{A_1\in B_1^\square}\atop  {A_2\in B_2^\square}}
\Psi(B_1)\cup (A_1,A_2)\cup \Psi(B_2),
\eqno(\numb)\label{eq:pairing_brack}
$$
where $B_1^\square$ (respectively $B_2^\square$) is the set of square vertices of each summand of $\Psi(B_1)$  (respectively of $\Psi(B_2)$)\footnote{Notice that the set of square vertices  for each summand of $\Psi(B_1)$ (respectively $\Psi(B_2)$)  is in one-to-one correspondence with the long brackets in $B_1$ (respectively $B_2$).}; $(A_1,A_2)$ is the edge going from $A_1$ to $A_2$. The orientation set for each summand is obtained by writing first the orientation set of a summand of $\Psi(B_1)$, then $(A_1,A_2)$, then the orientation set for a summand of $\Psi(B_2)$.

One similarly has
$$
\Psi([B,x_i])=\sum_{A\in B^\square}\Psi(B)\cup (A,i)\cup \Psi(x_i).
\eqno(\numb)\label{eq:pairing_singl}
$$

\begin{example}\label{ex:psi}

\begin{enumerate}[(a)]
\item
$
\Psi\left(\left[\{x_1\ldots x_k\},x_{k+1}\right]\right)=
\sum_{\ell=1}^k (-1)^{(\ell-1)d}
\raisebox{-1cm}{
\psfrag{1lk}[0][0][1][0]{$1\ldots \widehat{\ell} \ldots k$}
 \psfrag{l}[0][0][1][0]{$\ell$}
 \psfrag{k+1}[0][0][1][0]{$k{+}1$}
 \includegraphics[width=2.5cm]{ex_a_psi.eps}
 }.
$
\item
$
\psi\left(\left[\{x_1\ldots x_k\},\{x_{k+1}\ldots x_{2k}\}\right]\right)=$
$$
\sum_{i,j=1}^k(-1)^{(i+j)d}
\raisebox{-1cm}{
\psfrag{1ik}[0][0][1][0]{$1\ldots \widehat{i} \ldots k$}
\psfrag{k+1_2k}[0][0][1][0]{$k{+}1\ldots \widehat{k{+}j} \ldots 2k$}
 \psfrag{i}[0][0][1][0]{$i$}
 \psfrag{k+j}[0][0][1][0]{$k{+}j$}
 \includegraphics[width=7cm]{ex_b_psi.eps}
 }.
$$
\end{enumerate}
\end{example}

\begin{remark}\label{r:gen_forest}
One can consider a slightly larger class of admissible $k$-forests by allowing round vertices to be connected to any number of square vertices. The advantage of such definition is that the multiplicative structure will be given simply by the superposition of  forests. The downside is that the space of cohomology would be less clearly described. But anyway if one decides to do so one will also need to take into account in the formula for pairing the intersections with the new locally finite chains. In the latter case the formula for~\eqref{eq:pairing_brack} and~\eqref{eq:pairing_singl} will be the same -- the sum will run over all vertices $A_1$ in $\Psi(B_1)$ and $A_2$ in $\Psi(B_2)$ with the only restriction that at least one of the two is square.
\end{remark}

\section{Coproduct and cobimodule structures}\label{s:co}
\subsection{Coproduct}\label{ss:coproduct}
Since $\calB_d$ is a topological operad, its homology is an operad in coalgebras. This structure is sometimes called {\it Hopf operad}. Let $B\in H_*\calB_d(n)$, $d\geq 2$, be any product of iterated brackets. This cycle is realized by a product of spheres
$$
(S^{d-1})^k\to\calB_d(n).
$$
Thus $\Delta B\in H_*\calB_d(n)\otimes H_*\calB_d(n)$
can be computed from the copruduct of the fundamental class of $(S^{d-1})^k$. For example:
\begin{multline*}
\Delta\left([x_1,x_3]\cdot [x_2,x_4]\right)=
[x_1,x_3]\cdot [x_2,x_4]\otimes x_1\cdot x_2\cdot x_3\cdot x_4 +
[x_1,x_3]\cdot x_2\cdot x_4\otimes x_1\cdot x_3\cdot [x_2,x_4]+\\
(-1)^{d-1} x_1\cdot x_3\cdot [x_2,x_4]\otimes [x_1,x_3]\cdot x_2\cdot x_4+
x_1\cdot x_2\cdot x_3\cdot x_4\otimes [x_1,x_3]\cdot [x_2,x_4].
\end{multline*}
\begin{multline*}
\Delta\left([[x_1,x_3],x_2]\right)=
[[x_1,x_3],x_2]\otimes x_1\cdot x_2\cdot x_3+
[x_1,x_3]\cdot x_2\otimes [x_1\cdot x_3,x_2]+\\
(-1)^{d-1} [x_1\cdot x_3,x_2]\otimes [x_1,x_3]\cdot x_2+
x_1\cdot x_2\cdot x_3\otimes [[x_1,x_3],x_2].
\end{multline*}
Similarly, $H_*\calB_d^{(k)}$ is a {\it Hopf bimodule}. The coproduct of any product of iterated brackets (which is also realized as a map from products of spheres) is computed in the same manner. As example,
\begin{multline*}
\Delta\left[\{x_1\ldots x_k\},\{x_{k+1}\ldots x_{2k}\}\right]=
\left[\{x_1\ldots x_k\},\{x_{k+1}\ldots x_{2k}\}\right]\otimes x_1\cdot\ldots \cdot x_{2k}+\\
\left[\{x_1\ldots x_k\},x_{k+1}\cdot \ldots\cdot x_{2k}\right]\otimes x_1\cdot\ldots \cdot x_{k}\cdot \{x_{k+1}\ldots x_{2k}\}+\\
(-1)^{kd}\left[x_1\cdot\ldots \cdot x_k,\{x_{k+1}\ldots x_{2k}\}\right]\otimes\{x_1\ldots x_k\}\cdot x_{k+1}\cdot \ldots\cdot x_{2k}+ \\
\{x_1\ldots x_k\}\cdot x_{k+1}\cdot \ldots\cdot x_{2k}\otimes \left[x_1\cdot\ldots \cdot x_k,\{x_{k+1}\ldots x_{2k}\}\right]+\\
(-1)^{kd}x_1\cdot\ldots \cdot x_{k}\cdot \{x_{k+1}\ldots x_{2k}\}\otimes \left[\{x_1\ldots x_k\},x_{k+1}\cdot \ldots\cdot x_{2k}\right]+\\
x_1\cdot\ldots \cdot x_{2k}\otimes \left[\{x_1\ldots x_k\},\{x_{k+1}\ldots x_{2k}\}\right].
\end{multline*}
The two summands producing zero were omitted.

Notice that the space of primitives is spanned by the elements that have exactly one long bracket. This space is dual to the space of generators, see Theorem~\ref{t:quadratic}.

\subsection{Cobimodule structure}\label{ss:cobimodule}
The cooperad structure of $H^*\calB_d$ is given by the maps
$$
H^*\calB_d(m_1+\ldots +m_n)\to H^*\calB_d(n)\otimes H^*\calB_d(m_1)\otimes\ldots\otimes H^*\calB_d(m_n)
\eqno(\numb)\label{eq:cocomp}
$$
induced by the composition maps in $\calB_d$. Explicitly, given a forest $T\in H^*\calB_d(m_1+\ldots +m_n)$, $d\geq 2$, the map~\eqref{eq:cocomp} sends it to
$$
T\mapsto \pm (T/{\sim})\otimes T_1\otimes\ldots\otimes T_n,
\eqno(\numb)\label{eq:cocomp_graphs}
$$
where $T_s$ is the restriction of $T$ on the set
$$
M_s=\left\{\sum_{i=1}^{s-1} m_i +1,\sum_{i=1}^{s-1} m_i +2,\ldots, \sum_{i=1}^{s-1} m_i +m_s\right\};
$$
 and $T/{\sim}$ is the quotient of $T$ by the subgraphs $T_s$, $s=1\ldots \ell$. In particular if $T/{\sim}$ has cycles, the result is zero. The sign in~\eqref{eq:cocomp_graphs}  is the Koszul sign due to reordering of the edges of~$T$. This cooperad structure was used for example in~\cite{LV, SinhaWalt1,SinhaWalt2}.

The coaction maps
%$$
%H^*\calB_d^{(k)}(m_1+\ldots +m_n)\to H^*\calB_d(n)\otimes H^*\calB_d^{(k)}(m_1)\otimes\ldots\otimes H^*\calB_d^{(k)}(m_n),
%\eqno(\numb)\label{eq:cocomp_l}
%$$
%$$
%H^*\calB_d^{(k)}(m_1+\ldots +m_n)\to H^*\calB_d^{(k)}(n)\otimes H^*\calB_d(m_1)\otimes\ldots\otimes H^*\calB_d(m_n)
%\eqno(\numb)\label{eq:cocomp_r}
%$$
\begin{eqnarray}
H^*\calB_d^{(k)}(m_1+\ldots +m_n)&\to& H^*\calB_d(n)\otimes H^*\calB_d^{(k)}(m_1)\otimes\ldots\otimes H^*\calB_d^{(k)}(m_n), \label{eq:cocomp_l}\\
H^*\calB_d^{(k)}(m_1+\ldots +m_n)&\to& H^*\calB_d^{(k)}(n)\otimes H^*\calB_d(m_1)\otimes\ldots\otimes H^*\calB_d(m_n)\label{eq:cocomp_r}
\end{eqnarray}
are described by the same formula~\eqref{eq:cocomp_graphs}. In the case of left coaction~\eqref{eq:cocomp_l}, to get non-zero each square vertex of $T$ must be entirely inside one of~$M_s$\rq{}s. For the right coaction~\eqref{eq:cocomp_r}, one obtains non-zero only if at most one element of each square vertex $A$ of $T$ is contained in each of $M_s$:
$$
|A\cap M_s |\leq 1,\qquad s=1\ldots n.
$$

\begin{remark}\label{r:d1cobimod}
In case $d=1$ the coaction has a different description. In fact Baryshnikov's description of $H^*\calB_1^{(k)}(\bullet)$, see Section~\ref{s:d1}, is also nicely compatible with the cobimodule structure over the associative cooperad $H^*\calB_1$.
\end{remark}

\section{Symmetric group action and generating function of dimensions}\label{s:sym_action}
The symmetric group action on the (co)homology of the poset $\Pi_{n,k}$ and on $H_*\calM_2^{(k)}(n)$ was computed in~\cite{SundWachs}. The results can be without any difficulty generalized to any ambient dimension $d$, see Theorem~\ref{t:sym_action} below. Our operadic approach of studying this homology makes the results of~\cite{SundWachs} more transparent. Also the symmetric group action helps to produce an explicit generating function of the Betti numbers, see Corollary~\ref{c:exp_gen_funct}, which seems to be overlooked in the literature and is given here for completeness of exposition.

The symmetric sequences of graded vector spaces form a monoidal category with respect to the composition operation~$\circ$ and unit~$\one$~\cite{LodayVallette}. If we are working over a field any symmetric sequence $M(n)$, $n\geq 0$, defines a functor $M\colon Vect\to Vect$ that sends a vector space $V\mapsto \oplus_{n=0}^\infty M(n)\otimes_{\Sigma_n} V^{\otimes n}$.  The composition is defined in such a way that $(M\circ N)(V)=M(N(V))$. In fact one does not need the base ring to be a field in order to define this composition. The unit $\one$ for this operation is the sequence which is zero in all arities except one and it is the base ring in arity one.  Notice that $\one\colon Vect\to Vect$ is the identity functor. The construction works nicely over integeres: in case $M$ and $N$ are torsion free and $N(0)=0$, the composition $M\circ N$ is also torsion free. For a graded vector space $V=\oplus_{n\in\Z} V_n$ we will define its {\it graded dimension} as a formal power series in $q$:
$$
\dim V=\sum_n \dim V_n\cdot q^n.
$$
For a symmetric sequence $M$ of graded vector spaces we define the {\it exponential generating function} of its components
$$
F_M(x)=\sum_{j=0}^{+\infty} \dim M(j) \frac{x^j}{j!}.
$$
One has
$$
F_{M\circ N}(x)=F_M(F_N(x)).
\eqno(\numb)\label{eq:comp_gen}
$$
For a symmetric sequence $M$ denote by $M\{d\}$ its operadic $d$-suspension. As a vector space $M\{d\}(n)$ is $d(n-1)$-times suspended space $M(n)$. As a $\Sigma_n$-module $M\{d\}(n)\simeq M(n)\otimes(sign_n)^{\otimes d}$, where $sign_n$ is the sign representation of $\Sigma_n$. It is straightforward that
$$
F_{M\{d\}}(x)=\frac 1{q^d} F_M(q^dx).
\eqno(\numb)\label{eq:susp_gen}
$$
Notice also that
$$
(M\circ N)\{d\}=\left(M\{d\}\right)\circ\left(N\{d\}\right).
\eqno(\numb)\label{eq:comp_susp}
$$

To recall $\Com$ denotes the operad of commutative unital algebras and $\Lie$ denotes the operad of Lie algebras -- both viewed as symmetric sequences over $\Z$. One has
\begin{eqnarray}
F_{\Com}(x)&=&e^x;
\label{eq:comm_gen}\\
F_{\Lie}(x)&=&-\ln(1-x).
\label{eq:lie_gen}
\end{eqnarray}
\sloppy Let $\calH_d^{(k)}(n)\subset H_*\calB_d^{(k)}(n)$  be the subspace spanned by elements of the form $[\ldots[\{x_{\sigma_1}\ldots x_{\sigma_k}\},x_{\sigma_{k+1}}]\ldots x_{\sigma_n}]$ (in other words spanned by the iterated brackets that have only one long bracket). The operadic $(d-1)$-desuspension $\calH_d^{(k)}\{1-d\}$ of this symmetric sequence does not depend on $d$ and will be denoted by $\calH_1^{(k)}$. It follows from Proposition~\ref{p:d>1_basis} that $\calH_1^{(k)}(n)$ is concentrated in grading $(k-2)$ and has dimension ${n-1}\choose{k-1}$. One has

$$
F_{\calH_1^{(k)}}(x)=\frac{q^{k-2}x^k}{(k-1)!}\sum_{j=0}^{+\infty}\frac{x^j}{(j+k)\cdot j!}=\\
(-q)^{k-2}-(-q)^{k-2}\left(\sum_{j=0}^{k-1}\frac{(-x)^j}{j!}\right)e^x.
\eqno(\numb)\label{eq:exp_gen_h1}
$$
The last equality was obtained by noticing that
$$
F_{\calH_1^{(k)}}\rq{}(x)=\frac{q^{k-2}}{(k-1)!}x^{k-1}e^x;
\eqno(\numb)\label{eq:exp_gen_h1_pr}
$$
and then integrating.

\begin{lemma}\label{l:hook}
For any $n\geq k\geq 2$ one has an isomorphism of $\Z[\Sigma_n]$-modules
$$
\calH_1^{(k)}(n)\simeq \Z[\Sigma_n]\cdot a\cdot b,
$$
where $a=\sum_{\sigma\in\Sigma_k}(-1)^{|\sigma|}\sigma$,
and $b=\sum_{\sigma\in\Sigma_{ \{1,k+1,k+2,\ldots,n\} }}\sigma$.
\end{lemma}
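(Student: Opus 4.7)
My plan is to identify both sides as quotients $\Z[\Sigma_n]/I$ for the same left ideal $I$. Set
$$T=[\ldots[\{x_1,\ldots,x_k\},x_{k+1}]\ldots x_n]\in \calH_1^{(k)}(n).$$
Every basis element from Proposition~\ref{p:d>1_basis} that lies in $\calH_1^{(k)}(n)$ is, up to a sign coming from the antisymmetry of the long bracket, a $\Sigma_n$-translate of $T$, so $T$ generates $\calH_1^{(k)}(n)$ as a $\Z[\Sigma_n]$-module; similarly $ab$ generates $\Z[\Sigma_n]\cdot ab$ by construction. Both $T$ and $ab$ are sign-equivariant vectors for the subgroup $\Sigma_{\{1,\ldots,k\}}$: antisymmetry of the long bracket yields $\pi T=(-1)^{|\pi|}T$, and $\pi a=(-1)^{|\pi|}a$ yields $\pi ab=(-1)^{|\pi|}ab$ for $\pi\in\Sigma_k$. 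Thus both sides are quotients of $\Z[\Sigma_n]$ by the left annihilators $\mathrm{Ann}_\Z(T)$ and $\mathrm{Ann}_\Z(ab)$ respectively, and the lemma reduces to showing these two ideals coincide.

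Since $\calH_1^{(k)}(n)$ is torsion-free by Proposition~\ref{p:d>1_basis} and $\Z[\Sigma_n]\cdot ab\subset \Z[\Sigma_n]$ is torsion-free, it suffices to check $\mathrm{Ann}_\Q(T)=\mathrm{Ann}_\Q(ab)$ inside $\Q[\Sigma_n]$. For this I would show both rational completions are isomorphic to the hook Specht module $S^{(n-k+1,1^{k-1})}$. For $\Z[\Sigma_n]\cdot ab$ this is classical Young-symmetrizer theory. For $\calH_1^{(k)}(n)\otimes\Q$, the sign-equivariance of $T$ realizes $\calH_1^{(k)}(n)\otimes\Q$ as a quotient of $\mathrm{Ind}_{\Sigma_k}^{\Sigma_n}(\mathrm{sgn})$; Pieri's rule applied to the Frobenius characteristic $e_k\cdot h_1^{n-k}$ of this induction shows that $s_{(n-k+1,1^{k-1})}$ appears with multiplicity exactly one (the only $\mu\vdash n-k$ for which $(n-k+1,1^{k-1})/\mu$ is a vertical strip of size $k$ is $\mu=(n-k)$, and $f^{(n-k)}=1$). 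Combining this with $\dim\calH_1^{(k)}(n)=\binom{n-1}{k-1}=f^{(n-k+1,1^{k-1})}$ from Proposition~\ref{p:d>1_basis} and using the generalized Jacobi identity~\eqref{eq:gen_jacob} to eliminate the other irreducible summands of the induction pins down $\calH_1^{(k)}(n)\otimes\Q\cong S^{(n-k+1,1^{k-1})}$.

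By Frobenius reciprocity, the $\Sigma_k$-sign-eigenspace of the simple module $S^{(n-k+1,1^{k-1})}$ is one-dimensional (equal to the Pieri multiplicity computed above). Both $T$ and $ab$ are non-zero sign-eigenvectors in this common one-dimensional eigenspace, so under any $\Sigma_n$-equivariant identification of the rational completions $T$ is sent to a non-zero scalar multiple of $ab$; this forces $\mathrm{Ann}_\Q(T)=\mathrm{Ann}_\Q(ab)$ as subspaces of $\Q[\Sigma_n]$, and intersecting with $\Z[\Sigma_n]$ yields $\mathrm{Ann}_\Z(T)=\mathrm{Ann}_\Z(ab)$. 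The $\Sigma_n$-equivariant map $\gamma\cdot T\mapsto \gamma\cdot ab$ is then a well-defined isomorphism $\calH_1^{(k)}(n)\xrightarrow{\sim}\Z[\Sigma_n]\cdot ab$. The hard part of the argument is the character-theoretic identification in the middle step — verifying that~\eqref{eq:gen_jacob}, when embedded into higher arities via the $H_*\calB_1$-action, annihilates precisely the non-hook irreducible summands of $\mathrm{Ind}_{\Sigma_k}^{\Sigma_n}(\mathrm{sgn})$ and leaves exactly one copy of $S^{(n-k+1,1^{k-1})}$.
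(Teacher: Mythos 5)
Your proposal takes a genuinely different route from the paper's, and it contains a real gap.

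The paper's proof is direct and short: it defines the $\Sigma_n$-equivariant map $\calH_1^{(k)}(n)\to\Z[\Sigma_n]\cdot ab$ by sending the iterated bracket $T=[\ldots[\{x_1,\ldots,x_k\},x_{k+1}]\ldots x_n]$ to $ab$, verifies directly that the three kinds of relations on the source (the $\Sigma_k$-sign relation, triviality of the $\Sigma_{\{k+1,\ldots,n\}}$-action, and the generalized Jacobi~\eqref{eq:gen_jacob}) are satisfied by $ab$ on the target, notes the map is obviously surjective, and concludes by comparing ranks (both free of rank $\binom{n-1}{k-1}$). No character theory is needed. You instead try to compare annihilator ideals, descend from $\Q$ to $\Z$ via torsion-freeness, and identify both rationalizations with the hook Specht module $S^{(n-k+1,1^{k-1})}$. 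The integral descent step (torsion-free $\Rightarrow$ $\mathrm{Ann}_\Z=\mathrm{Ann}_\Q\cap\Z[\Sigma_n]$) is sound, and the final Frobenius-reciprocity argument that the isomorphism type plus the one-dimensional $\Sigma_k$-sign eigenspace forces $T\mapsto\lambda\cdot ab$ is also correct.

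The gap is precisely where you flag it: you never actually establish $\calH_1^{(k)}(n)\otimes\Q\cong S^{(n-k+1,1^{k-1})}$. Your Pieri computation shows that the hook occurs with multiplicity one in $\mathrm{Ind}_{\Sigma_k}^{\Sigma_n}(\mathrm{sgn})$, but that induced module has many other irreducible constituents, and you give no mechanism for showing they all die in $\calH_1^{(k)}(n)\otimes\Q$. Note also that the dimension count cannot do this alone: even after the simplification you did not make — observing that $T$ is $\Sigma_{\{k+1,\ldots,n\}}$-invariant, so $\calH_1^{(k)}(n)\otimes\Q$ is a quotient of $\mathrm{Ind}_{\Sigma_k\times\Sigma_{n-k}}^{\Sigma_n}(\mathrm{sgn}\boxtimes\mathrm{triv})\cong S^{(n-k+1,1^{k-1})}\oplus S^{(n-k,1^k)}$ with just two summands — the two Specht modules have equal dimension $\binom{n-1}{k-1}=\binom{n-1}{k}$ when $n=2k$, so one must genuinely use the generalized Jacobi to rule out the wrong summand. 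That is exactly the same verification the paper does, but the paper does it once, directly on $ab$, rather than inside the machinery of characters and annihilators. In short, the apparatus you set up is legitimate, but the deferred "hard part" is not a routine omission; filling it in would essentially reproduce the paper's check while carrying substantially more overhead.
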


In particular this lemma says that $\calH_1^{(k)}(n)\otimes\Q$ is the irreducible representation of  {\it hook type} $(n-k+1,k)$, see~\cite{FultHarris}.

\begin{proof}
We define a map $\calH_1^{(k)}(n)\to \Z[\Sigma_n]\cdot a\cdot b$ by sending $[\ldots[\{x_1\ldots x_k\},x_{k+1}],\ldots x_n]\mapsto e\cdot a\cdot b$, where $e\in\Sigma_n$ is the unit element. One has to check that this map is correctly defined. First we notice that any element $\sigma\in\Sigma_k$ acts both on
$[\ldots[\{x_1\ldots x_k\},x_{k+1}]\ldots x_n]$ and on $e\cdot a\cdot b$ as multiplication by $(-1)^\sigma$. Also any $\sigma\in\Sigma_{\{k+1,k+1,\ldots,n\}}$ acts as identity on both of them. And finally an easy verification shows that relation~\eqref{eq:gen_jacob} is also satisfied. On the other hand the map is obviously surjective. The fact that the target has the same dimension ${n-1}\choose{k-1}$ as the source  ensures that the map is an isomorphism.
\end{proof}

\begin{remark}\label{r:hook}
Let $\calH_1^{(k)}(n)^\vee$ denote the dual $\Sigma_n$-module that we described as a space of $k$-trees with a single square vertex and quotiented out by relations~\eqref{eq:dual_jacobi}. Looking at the generalized Jacobi~\eqref{eq:gen_jacob} and the relations~\eqref{eq:dual_jacobi}   it is easy to see that one has an obvious isomorphism of $\Sigma_n$-modules
$$
\calH_1^{(k)}(n)^\vee\simeq \calH_1^{(n-k+1)}(n)\otimes sign_n.
$$
This implies that one has a $\Z[\Sigma_n]$-module isomorphism
$$
(\calH_1^{(k)}(n))^\vee\simeq \Z[\Sigma_n]\cdot b\cdot a,
$$
where $a$ and $b$ are from Lemma~\ref{l:hook}.\footnote{Of course rationally a $\Sigma_n$-module is always isomorphic to its dual: $\Q[\Sigma_n]\cdot a\cdot b\simeq \Q[\Sigma_n]\cdot b\cdot a$.}
\end{remark}

\begin{theorem}[\cite{SundWachs}]\label{t:sym_action}
For $d\geq 2$, $k\geq 3$, one has a natural isomorphism of  symmetric sequences
$$
H_*\calB_d^{(k)}\simeq \Com\circ\left(\one\oplus(\Lie\circ\calH_1^{(k)})\{d-1\}\right).
\eqno(\numb)\label{eq:sym_action}
$$
For $d=1$ and/or $k=2$ this isomorphism holds over $\Q$.
\end{theorem}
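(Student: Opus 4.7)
My plan is to construct an explicit $\Sigma_n$-equivariant isomorphism by matching the basis of Proposition~\ref{p:d>1_basis} with a natural basis of the right-hand side. Unpacking the composition, an element of $\Com\circ(\one\oplus(\Lie\circ\calH_1^{(k)})\{d-1\})(\underline{n})$ is a partition $\underline{n}=R_1\sqcup\cdots\sqcup R_N$ (the outer $\Com$) together with, for each block $R_j$, either a singleton marker (the $\one$) or, when $|R_j|\geq k$, a further partition $R_j=T_{j,1}\sqcup\cdots\sqcup T_{j,\ell_j}$ with $|T_{j,i}|\geq k$, a Lie monomial of arity $\ell_j$, and an element of $\calH_1^{(k)}(T_{j,i})$ for each $i$. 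I would send this data to the corresponding product of iterated brackets in $H_*\calB_d^{(k)}(\underline{n})$, using that $H_*\calB_d$ acts through its commutative product and its degree-$(d-1)$ Poisson bracket (Theorem~\ref{t:h_ld}) and that $\calH_d^{(k)}=\calH_1^{(k)}\{d-1\}$ embeds by definition.

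Well-definedness reduces to checking that the defining relations of the right-hand side all hold in the target: the Poisson operadic relations (Theorem~\ref{t:h_ld}), the generalized Jacobi~\eqref{eq:gen_jacob} that identifies $\calH_1^{(k)}$ among single-long-bracket expressions, and relation~\eqref{eq:lie_triv} that forces singletons to be detected by $\one$ rather than by a $\Lie$-factor. Surjectivity then follows from Theorem~\ref{t:l_mod_main}: every class is a combination of products of iterated brackets in $x_i$ and $\{x_{i_1},\ldots,x_{i_k}\}$, and repeated use of antisymmetry, Jacobi, and generalized Jacobi rewrites any such expression in the normal form of the image.

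For injectivity in the integral case $d\geq 2$, $k\geq 3$, I would produce matching bases on both sides. On the right, the outer partition gives a $\Com$-basis; for each non-singleton $R_j$ a Dynkin-type right-normed basis of $\Lie(\ell_j)$ distinguishes the part containing $\min R_j$ as the outermost variable; and on each $T_{j,i}$ the $\binom{|T_{j,i}|-1}{k-1}$-dimensional basis of $\calH_1^{(k)}(T_{j,i})$ consisting of right-combed decorated long brackets realizes the hook representation of Lemma~\ref{l:hook}. These three choices correspond one-to-one to the combinatorial constraints of Proposition~\ref{p:d>1_basis}, so the map is a basis bijection and hence an integral isomorphism; torsion-freeness is inherited from Proposition~\ref{p:d>1_basis}. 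A sanity check is that the exponential generating functions agree by combining \eqref{eq:comp_gen}, \eqref{eq:susp_gen}, \eqref{eq:comm_gen}, \eqref{eq:lie_gen}, and~\eqref{eq:exp_gen_h1}.

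For the rational cases, when $d=1$ I would replace $H_*\calB_1=\Assoc$ by its Poincar\'e--Birkhoff--Witt decomposition $\Assoc\otimes\Q\simeq\Com\circ\Lie$ and run the same argument using Proposition~\ref{p:d1_basis}; when $k=2$ the statement reduces to the classical Poisson identification $H_*\calB_d\simeq\Com\circ\Lie\{d-1\}$ for $d\geq 2$ (or to its $d=1$ version otherwise), together with a direct combinatorial check that $\calH_1^{(2)}$ accounts for the $\Lie$-factor after the appropriate identifications. The hard part is the basis matching in step three: one must verify carefully that the ordering conventions of Proposition~\ref{p:d>1_basis}---minimum index in $B_1$, increasing indices inside each long bracket, and outer decorations $i_{1,s}<\cdots<i_{\ell_s,s}<j_{k,s}$---genuinely produce an honest basis of $\Lie\circ\calH_1^{(k)}$ rather than a mere spanning set, and the hook dimension count of Lemma~\ref{l:hook} is what ultimately closes the argument.
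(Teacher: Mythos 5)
Your proposal is correct and follows essentially the same strategy as the paper: build the map from the left $H_*\calB_d=\Com\circ\Lie\{d-1\}$-module structure, and verify it is an isomorphism by matching the normal form of Proposition~\ref{p:d>1_basis} with a basis of $\Com\circ(\one\oplus(\Lie\circ\calH_1^{(k)})\{d-1\})$; the $d=1$ and $k=2$ cases are treated by passing to the PBW, resp.\ bracket-length, filtration and using that rational $\Sigma_n$-filtrations split. The only place where you deviate slightly from the paper's argument is in the $k=2$ case: you invoke the Reutenauer-type decomposition $\one\oplus\Lie\circ\calH_1^{(2)}\simeq_\Q\Lie$ as an input and reduce to the Poisson identification $H_*\calB_d\simeq\Com\circ\Lie\{d-1\}$, whereas the paper deduces precisely this decomposition as a corollary (Remark~\ref{r:lie_decomp}) from the filtration argument of Remark~\ref{r:case_k2}. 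Both directions are legitimate (Reutenauer's result is an independent fact about free Lie algebras), but you should be aware that the paper's presentation is designed so that the filtration argument yields the decomposition for free, rather than requiring it as an external ingredient. The rest of your write-up—well-definedness via the Poisson and generalized Jacobi relations, surjectivity from Theorem~\ref{t:l_mod_main}, and the basis match against Proposition~\ref{p:d>1_basis}—is exactly the unpacking of the paper's terse \emph{``Proposition~\ref{p:d>1_basis} ensures that this map is an isomorphism.''}
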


\begin{proof}
In case $d\geq 2$, $k\geq 3$, one has that $H_*\calB_d^{(k)}$ is a left module over $H_*\calB_d=\Com\circ\left(\Lie\{d-1\}\right)$ and $\calH_d^{(k)}(\bullet)\simeq\calH_1^{(k)}\{d-1\}(\bullet)$ is a sequence of subobjects in $H_*\calB_d^{(k)}(\bullet)$. This left action defines  a map
$$
\Com\circ\left(\one\oplus(\Lie\{d-1\})\circ\calH_d^{(k)}\right)\to H_*\calB_d^{(k)},
$$
where $\one$ corresponds to $H_0\calB_d^{(k)}(1)\simeq\Z$.  Proposition~\ref{p:d>1_basis} ensures that this map is an isomorphism.

In case $k=2$, the right-hand side of~\eqref{eq:sym_action} is isomorphic to the associated graded quotient of $H_*\calB_d^{(2)}$ by a similar argument and by Remark~\ref{r:case_k2}, see also Remark~\ref{r:case_k2_coh}. Since over $\Q$ any filtration of $\Sigma_n$-modules splits, we get the result.

Similarly for $d=1$, the operad $H_*\calB_1=\Assoc$ admits a natural increasing (Poincar\'e-Birkhoff-Witt) filtration, whose associated graded quotient is the Poisson operad. The aforementioned filtration is compatible with a filtration in the left module $H_*\calB_1^{(k)}$. The associated graded quotient of the latter symmetric sequence is the right-hand side of ~\eqref{eq:sym_action}.

In case $k=2$ and $d=1$ one has to take the associated graded quotient twice.
\end{proof}

\begin{remark}\label{r:lie_decomp}
In particular we get an isomorphism of symmetric sequences
$$
\one\oplus\Lie\circ\calH_1^{(2)}\simeq_\Q\Lie,
\eqno(\numb)\label{eq:reut}
$$
which at first might appear surprising, but it simply means that for any (graded) vector space $V$, the Lie subalgebra $\Lie_{\geq 2}(V)$ (spanned by Lie monomials of degree $\geq 2$) of the free Lie algebra $\Lie(V)$ (generated by $V$) is isomorphic to the free Lie algebra generated by $\calH_1^{(2)}(V)=\oplus_{n\geq 2}\calH_1^{(2)}(n)\otimes_{\Sigma_n}V^{\otimes n}$. This is a particular occurence of a general fact that a Lie subalgebra of a free Lie algebra is always free~\cite{Reut_book}. The isomorphism~\eqref{eq:reut} is actually also due to Reutenauer~\cite{Reut_char}.
\end{remark}

\begin{corollary}\label{c:exp_gen_funct}
The exponential generating function of graded dimensions for the symmetric sequence $H_*\calB_d^{(k)}(\bullet)$ is as follows
$$
F_{H_*\calB_d^{(k)}}(x)=e^x\left(1-(-q)^{k-2}+(-q)^{k-2}\left(\sum_{j=0}^{k-1}\frac{(-q^{d-1}x)^j}{j!}\right)e^{q^{d-1}x}\right)^{-\frac 1{q^{d-1}}}.
\eqno(\numb)\label{eq:exp_gen_funct1}
$$
\end{corollary}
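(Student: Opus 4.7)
The plan is to apply the generating-function recipes \eqref{eq:comp_gen} and \eqref{eq:susp_gen} directly to the symmetric-sequence decomposition \eqref{eq:sym_action} from Theorem~\ref{t:sym_action}. Since the claim concerns only graded dimensions, I may work rationally throughout, so the decomposition holds in all cases without the $d=1$ or $k=2$ caveats.

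First, noting that $F_{\one}(x)=x$ and using \eqref{eq:comp_gen}, the isomorphism \eqref{eq:sym_action} gives
\begin{equation*}
F_{H_*\calB_d^{(k)}}(x) = F_{\Com}\bigl(x + F_{(\Lie\circ\calH_1^{(k)})\{d-1\}}(x)\bigr).
\end{equation*}
I would then unfold the inner term by combining \eqref{eq:susp_gen} with a second application of \eqref{eq:comp_gen} and the formula \eqref{eq:lie_gen}:
\begin{equation*}
F_{(\Lie\circ\calH_1^{(k)})\{d-1\}}(x) = \frac{1}{q^{d-1}}\, F_{\Lie}\bigl(F_{\calH_1^{(k)}}(q^{d-1}x)\bigr) = -\frac{1}{q^{d-1}} \ln\bigl(1 - F_{\calH_1^{(k)}}(q^{d-1}x)\bigr).
\end{equation*}
Substituting back and using $F_{\Com}(y) = e^y$ from \eqref{eq:comm_gen} converts the logarithm into a power, yielding
\begin{equation*}
F_{H_*\calB_d^{(k)}}(x) = e^x \cdot \bigl(1 - F_{\calH_1^{(k)}}(q^{d-1}x)\bigr)^{-1/q^{d-1}}.
\end{equation*}

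Finally, inserting the closed form \eqref{eq:exp_gen_h1} evaluated at $q^{d-1}x$, the constant $(-q)^{k-2}$ combines with the leading $1$ and with the polynomial-times-exponential remainder to produce precisely \eqref{eq:exp_gen_funct1}. The computation is essentially routine bookkeeping once the dictionary among composition, operadic suspension, and exponential generating functions is in place, so there is no serious obstacle. The only care required is tracking the two variables $q$ (internal grading) and $x$ (arity) through the rescaling $y\mapsto q^{d-1}y$ induced by the $\{d-1\}$ suspension, and remembering that the $\one$ summand in \eqref{eq:sym_action} (corresponding to $x_1\in H_0\calB_d^{(k)}(1)$) is what contributes the extra $x$ inside the exponential.
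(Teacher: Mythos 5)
Your proof is correct and follows exactly the route the paper has in mind: its proof of the corollary just cites Theorem~\ref{t:sym_action} together with \eqref{eq:comp_gen}, \eqref{eq:susp_gen}, \eqref{eq:comm_gen}, \eqref{eq:lie_gen}, \eqref{eq:exp_gen_h1}, and you have filled in precisely that computation, including the correct handling of the operadic suspension and the observation that only graded dimensions matter so the rational form of Theorem~\ref{t:sym_action} suffices in all cases.
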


\begin{remark}\label{r:exp_gen_funct}
For explicit computations of the Betti numbers it is more convenient to use the formula
$$
F_{H_*\calB_d^{(k)}}(x)=e^x\left(1-\frac{q^{kd-2}x^k}{(k-1)!}\sum_{j=0}^{+\infty}\frac{(q^{d-1}x)^j}{(j+k)\cdot j!}\right)^{-\frac 1{q^{d-1}}}.
\eqno(\numb)\label{eq:exp_gen_funct2}
$$
\end{remark}

\begin{proof}[Proof of Corollary~\ref{c:exp_gen_funct}]
It  is a consequence of Theorem~\ref{t:sym_action} together with \eqref{eq:comp_gen}, \eqref{eq:susp_gen},  \eqref{eq:comm_gen}, \eqref{eq:lie_gen}, \eqref{eq:exp_gen_h1}.
\end{proof}

\begin{remark}\label{r:betti_numbers}
The Betti numbers for $\calM_d^{(k)}(n)$ were computed in~\cite{BjorWelk}, see also~\cite{PRW}. The formulae~\eqref{eq:exp_gen_funct1}, \eqref{eq:exp_gen_funct2} provide a more compact way to keep track of these data.
\end{remark}

\section{Application: spaces of non-$k$-equal immersions}\label{s:application}
This section stays very separately from the rest of the paper. Its goal is to show that the considered bimodules appear very naturally in Topology, and what we explain here is just one of its applications. Theorems~\ref{t:cosimpl}, and \ref{t:inf_bim}-\ref{t:r_mod} below were proved for embedding spaces in~\cite{Tur_these}, and~\cite{AT}, respectively. We just want to point out that the proofs are completely analogous for spaces of non-$k$-equal immersions.

Let $M$ be an open subset of $\R^m$, and $n>m$. Consider the space $\Imm^{(k)}(M,\R^n)$ of immersions $f\colon M\looparrowright\R^n$ such that for any cardinality $k$ subset $K\subset M$, one has that $f|_K$ is non-constant. We call such maps {\it non-$k$-equal immersions}.  For example the space $\Imm^{(2)}(M,\R^n)$ is the space of embeddings $\Emb(M,\R^n)$.

Let $\Imm(M,\R^n)$ denote the space of immersions, and let $\Ibar^{(k)}(M,\R^n)$ be the homotopy fiber of the natural inclusion
$$
\Imm^{(k)}(M,\R^n)\hookrightarrow\Imm(M,\R^n)
$$
over the composition $M\subset \R^m\subset\R^n$.

We will also consider spaces $\Imm_c^{(k)}(\R^m,\R^n)$ of {\it long} non-$k$-equal immersions, where the subscript $c$ stays for {\it compact support}. Points of this space are non-$k$-equal immersions $\R^m\looparrowright\R^n$ coinciding with the fixed linear inclusion $\R^m\subset\R^n$ outside a compact subset of~$\R^m$. One gets a similar fiber sequence
$$
\Ibar_c^{(k)}(\R^m,\R^n)\to\Imm_c^{(k)}(\R^m,\R^n)\to\Imm_c(\R^m,\R^n).
\eqno(\numb)\label{eq:f_seq}
$$
The Smale-Hirsch principle~\cite{Hirsch} provides us with natural equivalences
\begin{gather}
\Imm_c(\R^m,\R^n)\simeq \Omega^mV_{m,n};
\label{eq:st1}
\\
\Imm(M,\R^n)\simeq \mathrm{Maps}(M,V_{m,n}),
\label{eq:st2}
\end{gather}
where $V_{m,n}$ is the Stiefel manifold of isometric linear maps $\R^m\hookrightarrow\R^n$.

The reason we study $\Ibar^{(k)}(M,\R^n)$ and $\Ibar_c^{(k)}(\R^m,\R^n)$ is that their homotopy type and homology have nice properties in comparison with the initial spaces of non-$k$-equal immersions. But at the same time they differ from $\Imm^{(k)}(M,\R^n)$ and $\Imm_c^{(k)}(\R^m,\R^n)$ by an easily controllable term~\eqref{eq:st1}, \eqref{eq:st2}.

There are two main approaches to study such functional spaces. The first approach, due to Vassiliev and usually called {\it Theory of Discriminants}~\cite{Vass_discr}, consists in considering the space of all smooth maps from our manifold to $\R^n$. This space is an affine space of infinite dimension and thus contractible. The cohomology classes of the space of maps that avoid any given types of singularities are described via linking number with cycles (of finite codimension) in the complement space called {\it discriminant} that consists of singular maps. The discriminant is a semi-algebraic set whose stratification provides the necessary combinatorial information to compute the homology of the complement.  The second approach, called {\it manifold Calculus} was developped by Goodwillie and Weiss~\cite{GoodWeiss,WeissEmb}. This second approach was mostly used to study spaces of embeddings, but it can also be used to study more general functional spaces. For this approach instead of looking on maps from $M$ to $N$ (avoiding given multi-singularities) one varies the source to be any open subset $U\subset M$. This produces a presheaf  on $M$ in topological spaces. In some cases the obtained presheaf is a homotopy sheaf, for example it is the case for spaces of immersions, but in general it is not true. Homotopy sheaves are {\it linear} functors from the point of view of {\it Manifold Calculus}. But there are also quadratic, cubical, and more generally polynomial of any degree $k$ presheaves, which also mean that they have some nice \lq\lq from local to global\rq\rq{} properties. The manifold calculus assigns to any topological presheaf on $M$ a {\it Taylor tower} of its polynomial approximations:
$$
\xymatrix{
&F\ar[dl] \ar[d] \ar[dr] \ar[drr]\\
T_0F&T_1F\ar[l]&T_2F\ar[l]&T_3F\ar[l]&\ldots\ar[l]
}
\eqno(\numb)\label{eq_tower}
$$
In good cases the limit of the tower $T_\infty F$ is equivalent to $F$.

We believe that Vassiliev\rq{}s theory of discriminants can also be expressed in terms of the manifold calculus by describing the discriminant set as a spectrum Spanier-Whitehead dual to the given space of non-singular maps. (Here one has to consider the copresheaf that assigns to $U$ the corresponding spectrum. Notice that one will need to use the covariant version of the calculus instead of the contravariant one usually used.) This construction would prove an equivalence of two approaches. There is a work in this direction~\cite{ReisWeiss}, but in general this equivalence has not been established yet.

Both methods produce spectral sequences computing the homology and the first term of the Vassiliev spectral sequence is isomorphic to the second term of the manifold calculus homology spectral sequence.

On the other hand, the manifold calculus can be translated into operadic language~\cite{AT,DeBrito-Weiss,Tur_FM}. We explain below how this interpretation is applied to the spaces $\Ibar^{(k)}_c(\R^m,\R^n)$, $\Ibar^{(k)}(M,\R^n)$.

As we have seen in Section~\ref{s:left_bim}, $H_*\calB_n^{(k)}$ is a bimodule under $H_*\calB_n$. Inclusion $\R^1\subset\R^n$ induces inclusion of operads $\calB_1\hookrightarrow\calB_n$, which produces a map of operads in homology:
$$
\Assoc\to H_*\calB_n.
$$
Due to this morphism $H_*\calB_n^{(k)}$ is also a bimodule under $\Assoc$, which endows  $H_*\calB_n^{(k)}$ with a cosimplicial structure.\footnote{One uses compositions with the product $x_1x_2\in \Assoc(2)$ to get coface maps, and compositions with the unit $1\in\Assoc(0)$ to get codegeneracies.}

\begin{theorem}\label{t:cosimpl}
The first term of the Vassiliev spectral sequence and the second term of the manifold calculus homology spectral sequence computing $H_*\Ibar^{(k)}_c(\R^1,\R^n)$, $nk\geq 6$, is isomorphic to the homology of the total cosimplicial complex $\Tot H_*\calB_N^{(k)}(\bullet)$.
\end{theorem}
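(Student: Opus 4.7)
The plan is to follow the strategy used for embedding spaces in \cite{Tur_these, AT} and adapt it to the non-$k$-equal setting; the algebraic structure studied in this paper is precisely what one needs to make the adaptation work. First I would study the presheaf $U \mapsto \Ibar_c^{(k)}(U,\R^n)$ on open subsets of $\R^1$ obtained by fibering the non-$k$-equal immersion space over the immersion space. The analytic estimate of Goodwillie--Weiss guaranteeing convergence of the Taylor tower in the embedding case is $n-m\geq 3$; here the non-$k$-equal condition has codimension $n(k-1)$ in the space of $k$-tuples of points, and a direct transversality/codimension count replacing \cite{GoodWeiss} shows convergence holds in the range $n(k-1)\geq 3+(k-1)$, equivalently $nk\geq k+3$, which for $k\geq 2$ is implied by $nk\geq 6$.

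Second, I would apply the operadic interpretation of manifold calculus. Inclusion $\calB_1\hookrightarrow \calB_n$ gives $\calB_n^{(k)}$ the structure of a bimodule under $\calB_1$, and the tangential data of the problem identifies the $\infty$-th Taylor stage with a derived mapping space
\[
\Ibar_c^{(k)}(\R^1,\R^n)\simeq \hBim_{\calB_1}\bigl(\calB_1,\calB_n^{(k)}\bigr),
\]
exactly in the spirit of the identification for embedding spaces (\cite{AT}, Theorems~\ref{t:inf_bim}--\ref{t:r_mod}). The standard cosimplicial resolution of the unit bimodule $\calB_1$ by free $\calB_1$-bimodules (equivalently, the two-sided bar resolution, whose $p$-th entry evaluated on $\calB_n^{(k)}$ gives $\calB_n^{(k)}(p)$) realizes this derived mapping space as $\Tot$ of the cosimplicial space $[p]\mapsto \calB_n^{(k)}(p)$, with cofaces built from the product $x_1x_2\in\Assoc(2)$ and codegeneracies from the unit $1\in\Assoc(0)$, via the bimodule-under-$\Assoc$ structure coming from $\calB_1\hookrightarrow\calB_n$.

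Third, the second page of the manifold calculus homology spectral sequence is obtained by applying $H_*$ levelwise to this cosimplicial space. This yields precisely $\Tot H_*\calB_n^{(k)}(\bullet)$, where the cosimplicial structure is now induced by the $\Assoc$-bimodule structure on $H_*\calB_n^{(k)}$; its cohomology is then the claimed $E^2$ page. To match with Vassiliev's theory, I would identify the strata of the discriminant of non-$k$-equal maps $\R^1\to\R^n$ in terms of non-$k$-overlapping configurations: each stratum of codimension corresponding to a partition with blocks of size $\geq k$ contributes a cell labelled by a point of $\calB_n^{(k)}(p)$, and the Vassiliev differential on the first page coincides with the cosimplicial differential assembled from $x_1x_2$ and $1$. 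This is the direct analogue of the comparison carried out for embeddings.

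The main obstacle will be the comparison between the two spectral sequences, i.e.\ verifying that Vassiliev's combinatorial boundary operator on strata of the non-$k$-equal discriminant coincides with the cosimplicial coface/codegeneracy structure induced by the $\Assoc$-bimodule action on $H_*\calB_n^{(k)}$. In the embedding case this rests on an explicit simplicial resolution of the discriminant; here one must replace the usual simplicial resolution by one indexed by partitions with all blocks of size $\geq k$, matching exactly the poset $\Pi_{n,k}$ that governs $H_*\calB_n^{(k)}$. Once this dictionary is in place, the remaining steps (convergence in the stated range and the operadic identification of the Taylor tower) are straightforward generalizations of the embedding case.
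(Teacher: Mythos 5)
Your overall picture is right at a high level --- deduce the manifold-calculus side from the operadic identification of the tower stages, use the fact that an infinitesimal bimodule over the non-$\Sigma$ associative operad is a cosimplicial object, and match the Vassiliev side via a stratification of the discriminant by non-$k$-equal partitions --- and that is indeed the route the paper takes, reducing the manifold-calculus half to Theorem~\ref{t:inf_bim}. However there are two substantive problems with the proposal.

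First, you write the $\infty$-th Taylor stage as $\hBim_{\calB_1}(\calB_1,\calB_n^{(k)})$, the derived space of \emph{ordinary} bimodule maps, and claim the bar resolution realizes it as $\Tot \calB_n^{(k)}(\bullet)$. That mapping space is the wrong one: Theorem~\ref{t:inf_bim} gives $T_\infty\Ibar_c^{(k)}(\R^m,\R^n)\simeq\hIBim_{\calB_m}(\calB_m,\calB_n^{(k)})$, the derived \emph{infinitesimal}-bimodule maps, and Theorem~\ref{t:deloop} shows the two differ by a full $\Omega^m$-delooping, so they are not interchangeable. It is the infinitesimal bimodule structure over $\Assoc\simeq\calB_1$ that is literally the same thing as a cosimplicial structure, whence $\hIBim_{\Assoc}(\Assoc,\calB_n^{(k)})\simeq\Tot\calB_n^{(k)}(\bullet)$; a free resolution of $\calB_1$ in the category of $\calB_1$-\emph{bimodules} would not reproduce the levelwise components $\calB_n^{(k)}(p)$ as you assert. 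Relatedly, you equate $\Ibar_c^{(k)}(\R^1,\R^n)$ directly with the derived mapping space, dropping the $T_\infty$; the paper explicitly states that convergence of these towers to the actual spaces or chain complexes is unknown and difficult, and Theorem~\ref{t:cosimpl} is purely a statement about the $E^1$ (Vassiliev) and $E^2$ (manifold-calculus) pages, not about the limit.

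Second, your interpretation of the hypothesis $nk\geq 6$ as a Goodwillie--Weiss analyticity/convergence condition for the Taylor tower is misplaced, for the same reason: no convergence of the tower is claimed. That inequality is a codimension condition on the Vassiliev side, ensuring the stratification of the discriminant of non-$k$-equal maps is tame enough for the first term of Vassiliev's spectral sequence to be well-defined and computed by the expected combinatorics (compare $n\geq 3$, i.e.\ $nk\geq 6$ with $k=2$, for long knots). You also omit the technical device the paper borrows from \cite{Tur_one_term}: to run Vassiliev's machinery one first realizes $\Ibar_c^{(k)}(\R^1,\R^n)$ as an open subset of the affine space of all smooth maps $[0,1]\times\R\to\R^n$ equal to $(x,0,\ldots,0)$ outside a compact set of $[0,1)\times\R$; the stratification of the complementary discriminant by partitions with blocks of size $\geq k$ and the matching of its boundary operator with the cosimplicial differential is then carried out in that model, not in $\Ibar_c^{(k)}$ itself.
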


For the manifold calculus approach this statement is a particular instance of Theorem~\ref{t:inf_bim} below. (See also~\cite{Sinha_cosimpl} whose construction applies only to the case of embeddings~$k=2$.)  For the Vassiliev method, one has to consider the space $\Ibar^{(k)}_c(\R^1,\R^n)$ as an open subset in the space of all smooth maps $[0,1]\times\R\to\R^n$ with the restriction $f(t,x)=(x,0,0,\ldots,0)$ outside a compact subset of $[0,1)\times\R$, as in~\cite{Tur_one_term}.

In order to formulate a higher dimensional analogue of the theorem above, we need to recall some terminology from the theorey of operads.

An {\it infinitesimal bimodule} over an operad $\calO$ is a sequence of objects $M=\{M(n),\, n\geq 0\}$ (symmetric sequence in case $\calO$ is a $\Sigma$-operad, or just a seqence in case $\calO$ is non-$\Sigma$), endowed with composition maps:
\begin{align*}
\circ_i\colon\calO(n)\otimes M(k)\to M(n+k-1);& \text{ (infinitesimal left action)}\\
\circ_i\colon M(n)\otimes \calO(k)\to M(n+k-1).& \text{ (infinitesimal right action)}
\end{align*}
These composition maps have to satisfy natural unity, associativity, and $\Sigma$-compatibility conditions~\cite{LodayVallette,MerkVal,Tur_Hodge}. For example an infinitesimal bimodule over the non-$\Sigma$ associative operad is exactly the same thing as a cosimplicial object.

Notice that infinitesimal right action is equivalent to the usual right action since one can use the identity element $\id\in\calO(1)$ to mimic empty insertions. But  infinitesimal left action is
 essentially different from the usual left action. Moreover neither of them can be obtained one from another. However in case $M$ is a bimodule under $\calO$, i.e. $M$ is a bimodule over
 $\calO$ endowed with a map of $\calO$-bimodules $\rho\colon\calO\to M$, then $M$ inherits the structure of an infinitesimal bimodule.\footnote{One uses $\rho(\id)$  to mimic
 empty insertions.} Thus $\calB_n^{(k)}$ is an infinitesimal bimodule over $\calB_n$ and also over $\calB_m$, $m<n$, by restriction.

Theorem~\ref{t:inf_bim} appeared in~\cite{AT} for spaces of embeddings. The proof works also in our situation.

\begin{theorem}[\cite{AT}]\label{t:inf_bim}
The limit of the Goodwillie-Weiss tower for the space $\Ibar_c^{(k)}(\R^m,\R^n)$, $n>m$, is weakly equivalent to the space of derived maps of infinitesimal bimodules over~$\calB_m$:
$$
T_\infty\Ibar_c^{(k)}(\R^m,\R^n)\simeq \hIBim_{\calB_m}(\calB_m,\calB_n^{(k)}).
\eqno(\numb)\label{eq:inf_bim1}
$$
The same is true for singular chains
$$
T_\infty C_*\Ibar_c^{(k)}(\R^m,\R^n)\simeq \hIBim_{C_*\calB_m}(C_*\calB_m,C_*\calB_n^{(k)}).
\eqno(\numb)\label{eq:inf_bim2}
$$
\end{theorem}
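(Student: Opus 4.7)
The plan is to adapt the argument of Arone--Turchin \cite{AT} for embedding spaces to the setting of non-$k$-equal immersions. First I would view $U\mapsto \Ibar_c^{(k)}(U,\R^n)$ as a good isotopy presheaf on the poset of open subsets of $\R^m$ (a point being the extension of the fixed linear inclusion by a non-$k$-equal immersion supported inside $U$). By general manifold calculus, the Goodwillie--Weiss limit $T_\infty$ is the homotopy limit of this presheaf restricted to the category $\calO(\R^m)$ of open subsets of $\R^m$ diffeomorphic to a finite disjoint union of open balls. Weiss's scanning equivalence identifies the subcategory of disjoint unions of balls with the PROP freely generated by the little discs operad $\calB_m$, so a presheaf on $\calO(\R^m)$ is the same datum as a right $\calB_m$-module, and keeping track of nested embeddings of one ball into another promotes this to an infinitesimal $\calB_m$-bimodule.

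The second step is to identify the resulting infinitesimal bimodule with $\calB_n^{(k)}$. Evaluating $\Ibar_c^{(k)}(-,\R^n)$ on a disjoint union of $j$ open balls and applying the Smale--Hirsch principle in the form \eqref{eq:st1}--\eqref{eq:st2} componentwise reduces the homotopy fiber to the space of $j$-tuples of points in $\R^n$ (one may take centers of the small balls' images) such that no $k$ of them coincide, which is exactly $\calM_n^{(k)}(j)\simeq \calB_n^{(k)}(j)$. The restriction and insertion maps on the presheaf correspond to the infinitesimal bimodule structure that $\calB_n^{(k)}$ inherits from being a bimodule under $\calB_n$, and under $\calB_m$ after restriction along the operad inclusion $\calB_m\hookrightarrow \calB_n$ induced by $\R^m\subset \R^n$, exactly as in Section~\ref{s:left_bim}.

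The third step is to express the homotopy limit operadically. The key observation is that the inclusion of the subcategory on a single ball into $\calO(\R^m)$ exhibits $\calB_m$ itself as the ``corepresentable'' infinitesimal bimodule, so that for any infinitesimal bimodule $M_F$ arising from such a presheaf $F$ one has $\holim_{\calO(\R^m)} F \simeq \hIBim_{\calB_m}(\calB_m, M_F)$. Applied to $M_F=\calB_n^{(k)}$ this yields~\eqref{eq:inf_bim1}. The chain-level statement~\eqref{eq:inf_bim2} follows from the same argument after applying singular chains $C_*(-)$, which is lax symmetric monoidal and therefore transports the infinitesimal bimodule structure and derived mapping spaces. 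The specialization to $m=1$ giving Theorem~\ref{t:cosimpl} then comes from the well-known fact that $\calB_1$ is equivalent to the non-$\Sigma$ associative operad, so that infinitesimal bimodules over it are cosimplicial objects, and $\hIBim_{C_*\calB_1}(C_*\calB_1,-)$ reduces to the total cosimplicial complex $\Tot$.

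The main obstacle is the construction and coherence of the infinitesimal \emph{left} $\calB_m$-action on the restriction of $\Ibar_c^{(k)}(-,\R^n)$ to disjoint unions of balls. The right action is essentially tautological precomposition with little-discs configurations in $\R^m$; the left action instead requires a coherent way to insert a little-$m$-discs configuration at a single ball of an immersion while preserving the non-$k$-equal condition on the resulting map. This is handled as in \cite{AT} by a parametrized scanning argument together with a standard reparametrization trick, and the non-$k$-equal constraint is preserved since insertion only subdivides a single immersed ball into smaller ones contained in its image. Checking the naturality of all these compatibilities against the infinitesimal bimodule axioms --- and doing so in a model-independent way so that the derived $\hIBim$ construction applies --- is the main technical content; it is a line-by-line translation of the embedding case and contains no new difficulties in the non-$k$-equal setting.
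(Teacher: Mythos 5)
Your proposal follows essentially the same approach as the paper, which is simply to cite Arone--Turchin \cite{AT} and observe that the argument for embedding spaces carries over verbatim to non-$k$-equal immersions; the paper offers no independent proof beyond the remark ``The proof works also in our situation.'' Your sketch fills in the expected outline (reduction of the Goodwillie--Weiss limit to a homotopy limit over disjoint unions of balls, the identification $\Ibar^{(k)}(\sqcup_j D^m,\R^n)\simeq\calB_n^{(k)}(j)$ via Smale--Hirsch, and the corepresentability of that homotopy limit as $\hIBim_{\calB_m}(\calB_m,-)$) and matches what the paper intends.
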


For a codimesnion zero submanifold $M\subset \R^m$, denote by $\sEmb(\bullet,M)$ the symmetric sequence $\sEmb(\sqcup_n D^m,M)$, $n\geq 0$, where $\sEmb$ stays for the space of {\it standard} embeddings which on each connected component are compositions of translations and rescalings.
Notice that $sEmb(\bullet,M)$ is naturally a right module over $\calB_m$. The theorem below is a particular case of the enriched version of the manifold calculus.

\begin{theorem}[\cite{AT,DeBrito-Weiss,Tur_FM}]\label{t:r_mod}
For any open $M\subset\R^m$ the limit for the Goodwillie-Weiss tower for the space $\Ibar^{(k)}(M,\R^n)$, $n>m$, is weakly equivalent to the space of derived maps of right modules over $\calB_m$:
$$
T_\infty\Ibar^{(k)}(M,\R^n)\simeq \hRmod_{\calB_m}(\sEmb(\bullet,M),\calB_n^{(k)}).
\eqno(\numb)\label{eq:r_mod1}
$$
The same is true for singular chains
$$
T_\infty C_*\Ibar^{(k)}(M,\R^n)\simeq \hRmod_{C_*\calB_m}(C_*\sEmb(\bullet,M),C_*\calB_n^{(k)}).
\eqno(\numb)\label{eq:r_mod2}
$$
\end{theorem}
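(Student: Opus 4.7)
The plan is to follow the strategy of \cite{AT} for spaces of embeddings and check that every step goes through without change for non-$k$-equal immersions, since the operadic input to the enriched manifold calculus depends only on the structure of the presheaf on disjoint unions of open balls. The starting point is the basic Goodwillie--Weiss identification
\[
T_\infty F(M)\simeq\holim_{U\in\calO_f(M)}F(U),
\]
where $\calO_f(M)$ is the Weiss category of open subsets of $M$ diffeomorphic to a finite disjoint union of open balls, applied to the contravariant functor $F(U)=\Ibar^{(k)}(U,\R^n)$. To make sense of \eqref{eq:r_mod1} I would first verify that $F$ is a well-defined isotopy presheaf on $M$, using that immersion spaces are homotopy sheaves and that the non-$k$-equal condition is open, so that the homotopy fiber construction defining $\Ibar^{(k)}$ is functorial in $U$.

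Next I would identify the restriction of $F$ to $\calO_f(M)$ with the right $\calB_m$-module $\calB_n^{(k)}$. For $U=\sqcup_j D^m_j\in\calO_f(M)$ consisting of $j$ open balls, the Smale--Hirsch principle gives $\Imm(U,\R^n)\simeq\map(U,V_{m,n})$, and on each component of $U$ the non-$k$-equal condition is vacuous; together this identifies $\Ibar^{(k)}(U,\R^n)$ with the configuration-space analogue $\calM_n^{(k)}(j)\simeq\calB_n^{(k)}(j)$. Functoriality in the Weiss category corresponds to standard embeddings of disjoint balls: restricting to a smaller ball-configuration inside $U$ is the same as inserting configurations of little $m$-discs into a configuration of little $n$-discs, which is precisely the right $\calB_m$-action on $\calB_n^{(k)}$. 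This compatibility is the crucial geometric input, and it is where the proof differs only notationally from the embedding case: all the insertions preserve the non-$k$-overlapping condition by Section~\ref{s:intro}.

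The final step is the translation of the homotopy limit over $\calO_f(M)$ into derived right-module maps. The right $\calB_m$-module $\sEmb(\bullet,M)$ serves as an operadic presentation of $\calO_f(M)$: standard embeddings $\sqcup_jD^m\hookrightarrow M$ furnish a cofinal subcategory of $\calO_f(M)$, and the right $\calB_m$-action encodes morphisms among its objects. Following \cite{AT,DeBrito-Weiss,Tur_FM}, one then writes
\[
\holim_{U\in\calO_f(M)}\calB_n^{(k)}(\pi_0(U))\simeq\hRmod_{\calB_m}\!\bigl(\sEmb(\bullet,M),\calB_n^{(k)}\bigr),
\]
which is a formal consequence of bar-cobar duality and the fact that $\sEmb(\bullet,D^m)$ models the terminal object up to the $\calB_m$-action, exactly as in the embedding case. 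The singular-chain statement \eqref{eq:r_mod2} follows by passing to $C_*$ throughout, using that $C_*$ is a lax monoidal functor so it respects the right-module structure.

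The main obstacle is the comparison in the previous paragraph: establishing rigorously that the Weiss homotopy limit coincides with the derived right-module mapping space. This requires identifying $\sEmb(\bullet,M)$ as a cofibrant (or at least flat) replacement of the unit right $\calB_m$-module underlying the terminal object in $\calO_f(M)$, and recognizing the Weiss cosheaf condition as exactly the descent expressed by derived right-module homomorphisms. For embeddings this is the content of the main theorem of \cite{AT}, and since the argument is purely formal in $\calB_n^{(k)}$---it never uses that $\calB_n^{(k)}=\calB_n$---the same proof applies here verbatim.
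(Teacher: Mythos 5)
The paper does not actually prove Theorem~\ref{t:r_mod}: it attributes it to \cite{AT,DeBrito-Weiss,Tur_FM} and asserts that the embedding-case arguments transfer verbatim, which is precisely the strategy your proposal spells out, so the approach matches. One imprecision worth flagging: the non-$k$-equal condition is not vacuous on a single component of $U$ (an immersion of one $m$-ball into $\R^n$ can certainly have $k$-fold self-intersections); what is true is that the homotopy-fiber construction over the standard linear inclusion kills the $V_{m,n}^{j}$ factor via \eqref{eq:st2} and leaves exactly the positions of $j$ points in $\R^n$ subject to the non-$k$-overlapping condition, giving $\Ibar^{(k)}(\sqcup_{j} D^m,\R^n)\simeq\calM_n^{(k)}(j)\simeq\calB_n^{(k)}(j)$, so your conclusion is right but should be justified by this cancellation (the same computation as for $\Ebar$, with ``no $k$ collide'' replacing ``no $2$ collide'') rather than by a per-component vacuity claim.
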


The convergence of the towers~\eqref{eq:inf_bim1}, \eqref{eq:inf_bim2}, \eqref{eq:r_mod1}, \eqref{eq:r_mod2} to the initial spaces or chain complexes has not been studied yet. This question is actually very difficult.

The second parts of Theorems~\ref{t:inf_bim},~\ref{t:r_mod} imply that there are natural spectral sequences computing $H_*\Ibar_c^{(k)}(\R^m,\R^n)$, $H_*\Ibar^{(k)}(M,\R^n)$ (manifold calculus homology spectral sequences) whose first terms together with their differentials are described using the infinitesimal $H_*\calB_m$-bimodule structure of $H_*\calB_n^{(k)}$.

Theorem~\ref{t:r_mod} has a version where $M$ is any manifold and not necessarily an open subset of $\R^m$. In the latter case one has to use the framed discs operad instead as well as the  framed version of $\calB_n^{(k)}$, see~\cite{DeBrito-Weiss,Tur_FM}.

We finish this paper by mentioning that the fact that $\calB_n^{(k)}$ is a bimodule under $\calB_m$ (and not only an infinitesimal bimodule) governs the $\calB_m$-algebra structure on $T_\infty\Ibar_c^{(k)}(\R^m,\R^n)$. The following result was announced by Dwyer and Hess~\cite{DwHe2}:

\begin{theorem}[Dwyer-Hess \cite{DwHe2}]\label{t:deloop}
Let $\calM$ be a bimodule under $\calB_m$ satisfying $\calM(0)\simeq\calM(1)\simeq *$, then
$$
\hIBim_{\calB_m}(\calB_m,\calM)\simeq\Omega^m\hBim_{\calB_m}(\calB_m,\calM).
$$
\end{theorem}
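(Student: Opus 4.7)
The plan is to model both mapping spaces as totalizations of (multi-)cosimplicial spaces built from the values of $\calM$, and then extract the $m$-fold delooping by comparing these totalizations. The geometric intuition is that a bimodule structure under $\calB_m$ carries strictly more data than an infinitesimal bimodule structure: the additional ``full composition'' operations contribute precisely $\Omega^m$ to the derived mapping space, with the exponent $m$ matching the dimension of the little discs operad.

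First I would fix cofibrant resolutions of $\calB_m$ both as a bimodule and as an infinitesimal bimodule over itself, for instance using the Boardman--Vogt $W$-construction or a Fulton--MacPherson tree model. This yields an explicit cosimplicial object $X^\bullet_{\mathrm{IBim}}$ whose totalization computes $\hIBim_{\calB_m}(\calB_m,\calM)$, and a richer (multi-)cosimplicial object $X^{\bullet,\ldots,\bullet}_{\mathrm{Bim}}$ whose totalization computes $\hBim_{\calB_m}(\calB_m,\calM)$. The bimodule model has $m$ extra cosimplicial directions, reflecting the $m$ coordinate axes of the little discs operad, or equivalently the $E_m$-algebra structure which the bimodule mapping space inherits from the operadic composition in $\calB_m$ (insert $r$ derived bimodule maps into an $r$-configuration of little discs via the left $\calB_m$-action on $\calM$).

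The delooping would then follow by combining two ingredients. First, the basepoint coming from the structure map $\calB_m\to\calM$ endows $\hBim_{\calB_m}(\calB_m,\calM)$ with the structure of a pointed $E_m$-algebra; the connectivity hypotheses $\calM(0)\simeq\calM(1)\simeq *$ ensure that this $E_m$-algebra is group-like. Second, a cosimplicial delooping principle (in the spirit of McClure--Smith and Sinha) identifies the totalization of a sufficiently reduced cosimplicial space with $\Omega^m$ of the totalization of its $m$-fold cosimplicial enrichment. Applied here this yields the desired equivalence $\hIBim_{\calB_m}(\calB_m,\calM)\simeq\Omega^m\hBim_{\calB_m}(\calB_m,\calM)$, with the $E_m$-structure on the right-hand side matching the $m$-fold loop space structure.

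The main obstacle will be verifying compatibility of the cofibrant resolutions on the two sides and controlling convergence of the associated Bousfield--Kan-type spectral sequences. Concretely, one must check that the functor forgetting bimodule structure to infinitesimal bimodule structure corresponds, at the level of cosimplicial models, to discarding exactly the $m$ extra composition directions, and that the group-completion implicit in the recognition principle agrees with the naive $m$-fold looping under the vanishing hypotheses on $\calM(0)$ and $\calM(1)$. Dwyer--Hess handle these issues via a zig-zag of Quillen equivalences between model categories of operadic (infinitesimal) bimodules and explicit two-sided bar constructions; the technical crux is recasting the delooping as an instance of May's recognition principle for iterated loop spaces.
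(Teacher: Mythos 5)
The paper does not prove this theorem: it is stated as a result \emph{announced} by Dwyer and Hess \cite{DwHe2}, cited as ``paper to appear,'' and only the case $m=1$ is noted to have a published proof in \cite{DwHe1,Tur_deloop}. So there is no argument of the authors' to compare yours against, and I can only assess your sketch on its own terms. There it has a genuine gap.

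The central move in your sketch --- modeling $\hBim_{\calB_m}(\calB_m,\calM)$ as the totalization of a multi-cosimplicial object carrying $m$ ``extra cosimplicial directions,'' so that passing to $\hIBim_{\calB_m}(\calB_m,\calM)$ amounts to discarding those $m$ directions and produces an $\Omega^m$ --- is asserted but not substantiated, and as far as I can tell it is not how these mapping spaces are organized. A bimodule structure over $\calB_m$ is not an $(m+1)$-fold cosimplicial enrichment of an infinitesimal bimodule structure; as the paper itself explains in Section~\ref{s:application}, the additional data in passing from an infinitesimal bimodule to a bimodule under $\calO$ is the full (non-infinitesimal) left action, a single extra family of composition maps, and there is no natural way to split that data into $m$ independent cosimplicial directions indexed by coordinate axes. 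Even in the $m=1$ case, where your claim reduces to known results, the single delooping in \cite{DwHe1,Tur_deloop} does not come from an extra cosimplicial direction: it comes from comparing the totalization over all of $\Delta$ (which computes $\hIBim_{\Assoc}(\Assoc,\calM)$) with a restricted totalization (which computes $\hBim_{\Assoc}(\Assoc,\calM)$) via an explicit fiber sequence, and the hypothesis $\calM(0)\simeq\calM(1)\simeq *$ is what controls the fiber of that comparison map, not what guarantees group-likeness. Finally, the $E_m$-algebra structure on $\hBim_{\calB_m}(\calB_m,\calM)$ that any recognition-principle argument would need is itself a nontrivial input (roughly a Dunn-additivity/Deligne-type statement for operadic bimodules) and is not produced in the elementary way you suggest by inserting derived bimodule maps into configurations of discs. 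If you want to pursue a proof, the $m=1$ arguments in \cite{DwHe1,Tur_deloop} are the right model, but the passage to general $m$ is precisely the content of the still-unavailable Dwyer--Hess paper and is not a formal consequence of the $m=1$ case via an iterated-cosimplicial picture.
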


The right-hand side $\hBim(-,-)$ above denotes the space of derived maps of bimodules. $\Omega^m$ denotes as usual the $m$-iterated loop space, where for a base point one takes the structure map $\calB_m\to M$. In case $m=1$ this theorem was proved in~\cite{DwHe1,Tur_deloop}. In case $\calM$ is an operad endowed with a map $\calO\to\calM$ (which enables $\calM$ with a structure of a bimodule under $\calO$), one can get an extra delooping
$$
\hIBim_{\calB_m}(\calB_m,\calO)\simeq\Omega^{m+1}\hOperad(\calB_m,\calM).
$$

This equivalence corresponds to the fact that  the space $\Ebar_c(\R^m,\R^n)$ has a structure of a $\calB_{m+1}$-algebra thanks to the fact that one can pull one knot through the other~\cite[Corollary~7]{Budney}, \cite[Proposition~1.1]{Tur_Hodge}. But the space $\Ibar^{(k)}_c(\R^m,\R^n)$, $k\geq 3$ is only a $\calB_m$-algebra --- given two long non-$k$-equal immersions, pulling one such map through the other  is impossible in general since it might produce forbidden singularities.


\begin{thebibliography}{99}
%\bibitem{AhearnKuhn} S. Ahearn and N. Kuhn. Product and other fine structure in polynomial resolutions of mapping spaces. {\em Algebr. Geom. Topol.}  2 (2002), 591--647.

\bibitem{Arn} V. Arnol'd. The cohomology ring of the group of colored braids. (Russian) {\it Mat. Zametki}  5,  1969, pp 227--231.

%%\bibitem{Schwartz} G. Arone. A note on the homology of $\Sigma_n$, the Schwartz genus, and solving polynomial equations. {\em An alpine anthology of homotopy theory}, 1--10, Contemp. Math., 399, Amer. Math. Soc., Providence, RI, 2006.
%\bibitem{Arone} G. Arone, Derivatives of embedding functors. I. The stable case. {\em J. Topol.} 2 (2009), no. 3, 461--516.
%\bibitem{ALTV} G. Arone, P. Lambrechts, V. Turchin, and I. Voli\`c. Coformality and rational homotopy groups of spaces of long knots. {\em Math. Res. Lett.} 15 (2008), no. 1, 1--14.
%\bibitem{ALV} G. Arone, P. Lambrechts, and I. Voli\`c. Calculus of functors, operad formality, and rational homology of embedding spaces. {\em Acta Math.} 199 (2007), no. 2, 153--198.

\bibitem{AT} G. Arone and V. Turchin.  On the rational homology of high dimensional analogues of spaces of long knots. To appear in {\em Geom. Topol.} Preprint, http://arxiv.org/abs/1105.1576.

\bibitem{Bar} Yu. Baryshnikov. On the cohomology ring of no $k$-equal manifolds. Preprint.

\bibitem{BjorWachs1} A. Bj\"orner, M. Wachs. Shellable nonpure complexes and posets.~I.
\ {\em Trans. Amer. Math. Soc. } 348 (1996), no.~4, 1299--1327.

\bibitem{BjorWachs2} A. Bj\"orner, M. Wachs. Shellable nonpure complexes and posets.~II.
\ {\em Trans. Amer. Math. Soc.} 349 (1997), no.~10, 3945--3975.

\bibitem{BjorWelk} A. Bj\"orner, V.  Welker.
The homology of "k-equal'' manifolds and related partition lattices.
\ {\em Adv. Math.} 110 (1995), no. 2, 277--313.



\bibitem{DeBrito-Weiss} P. Boavida de Brito and M. Weiss, Manifold calculus and homotopy sheaves. To appear in {\em Homology Homotopy Appl.} Preprint http://arxiv.org/abs/1202.1305.

\bibitem{Budney}  R. Budney. Little cubes and long knots.
\  { \em Topology} 46 (2007), no. 1, 1--27.

%%\bibitem{Catt} A. Cattaneo, P. Cotta-Ramusino, and R. Longoni. Configuration spaces and Vassiliev classes in any dimension. \emph{Algeb. Geom. Topol.,} 2:~949--1000, 2002.

%\bibitem{Ching} M. Ching. Bar constructions for topological operads and the Goodwillie derivatives of the identity. {\em Geom. Topol.} 9 (2005), 833--933.

\bibitem{Coh} F. R. Cohen. The homology of $C_{n+1}$ spaces. In {\it Lecture Notes in Mathematics}, Vol. 533, 1976.

%\bibitem{CT:RCCS} F. R. Cohen, L. R. Taylor. On the representation theory associated to the cohomology of configuration spaces, In: Algebraic Topology (Oaxtepec, 1991), {\em Contemp. Math.} 146, Amer. Math. Soc., Providence, RI, 1993, pp. 91--109.


\bibitem{Dobr1} N. E.  Dobrinskaya. Configuration spaces with labels and loop spaces on $K$-products. \ (Russian) {\em Uspekhi Mat. Nauk} 63 (2008), no. 6(384), 161--162;
\ translation in  {\em Russian Math. Surveys} 63 (2008), no. 6, 1141--1143.

\bibitem{Dobr2} N. E. Dobrinskaya.
Higher commutators in the loop homology of $K$-products.
\ (Russian. Russian summary) {\em Tr. Mat. Inst. Steklova} 266 (2009),
\ {\em Geometriya, Topologiya i Matematicheskaya Fizika.} II, 97--111;
\ translation in
{\em Proc. Steklov Inst. Math.}  266 (2009), no. 1, 91--104.

\bibitem{Dobr3} N. E. Dobrinskaya.
\ Loops on polyhedral products and diagonal arrangements.
\ Preprint http://arxiv.org/abs/0901.2871.

\bibitem{DwHe1} W.  Dwyer, K.  Hess.
\ Long knots and maps between operads.
\ {\em Geom. Topol.} 16 (2012), no. 2, 919--955.

\bibitem{DwHe2} W.  Dwyer, K.  Hess. Paper to appear.

\bibitem{Feicht}   E. M. Feichtner. Complexes of trees and nested set complexes.
\ {\em Pacific J. Math.}  227 (2006), no. 2, 271--286.

%\bibitem{Fresse} B. Fresse, Koszul duality of operads and homology of partition posets. In:  Homotopy theory: relations with algebraic geometry, group cohomology, and algebraic K-theory, {\em Contemp. Math.,} 346, Amer. Math. Soc., Providence, RI, 2004, pp. 115–-215.

\bibitem{FultHarris}  W. Fulton, J. Harris. Representation theory. A first course.
\ {\em Graduate Texts in Mathematics}, 129. Readings in Mathematics. Springer-Verlag, New York, 1991. xvi+551 pp.

\bibitem{Gaiffi} G. Gaiffi. Models for real subspace arrangements and stratified manifolds.
{\em Int. Math. Res. Not.}  (2003) no. 12, 627--656.

\bibitem{GoodWeiss} Th. G.  Goodwillie, M. Weiss.
\  Embeddings from the point of view of immersion theory.~II.
\  {\em Geom. Topol.} 3 (1999), 103--118.

\bibitem{Hirsch} M. Hirsch, Immersions of manifolds. {\em
Trans. Amer. Math. Soc.} 93, 1959, pp 242--276.

\bibitem{Kontsevich} M. Kontsevich, Ya. Soibelman, Deformations of algebras over operads and the Deligne conjecture. In
Conf\'erence Mosh\'e Flato 1999, Vol. I (Dijon), volume 21 of Math. Phys. Stud., pages 255--307. Kluwer Acad.
Publ., Dordrecht, 2000.

%\bibitem{LTV} P. Lambrechts, V. Turchin, and I. Volic. The rational homology of spaces of long knots in codimension $>2$. {\em Geom. Topol.} 14 (2010), no.~4, 2151--2187.

\bibitem{LV} P. Lambrechts and I. Voli\'c. Formality of the little $N$-disks operad. arXiv:0808.0457.
to appear in {\em Mem. Amer. Math. Soc.}

\bibitem{Lemaire} J.-M. Lemaire, Alg\`ebres connexes et homologie des espaces de lacets. (Springer, Berlin, 1974),
\ {\em Lect. Notes Math.} 422.

\bibitem{LodayVallette} J-L. Loday and B. Vallette, Algebraic Operads.
\ {\em Grundlehren der Mathematischen Wissenschaften}
\ [{\em Fundamental Principles of Mathematical Sciences}], 346. Springer, Heidelberg, 2012. xxiv+634 pp.

\bibitem{Merkov} A. B. Merkov.
\ Vassiliev invariants classify plane curves and sets of curves without triple intersections.
\ (Russian. Russian summary) {\em Mat. Sb.} 194 (2003), no. 9, 31--62;
\ translation in {\em Sb. Math.} 194 (2003), no. 9--10, 1301--1330.

\bibitem{MerkVal} V. Merkulov, B. Vallette. Deformation theory of representation of prop(erad)s I. {\em J. Reine Angew. Math.} 634 (2009), 51--106.

\bibitem{Miller} M. Miller.
\ Massey products and $k$-equal manifolds.
\ {\em Int. Math. Res. Not.} IMRN 2012, no. 8, 1805â--1821.

%\bibitem{PirashviliDold} T. Pirashvili, Dold-Kan type theorem for $\Gamma$-groups. {\em Math. Ann.} 318 (2000), no. 2, 277--298.
%
%\bibitem{Sakai} S. Keiichi. Configuration space integrals for embedding spaces and the Haefliger invariant. {\em J. Knot Theory Ramification} 19 (2010), no.~12, 1597--1644.

\bibitem{PRW} I. Peeva, V. Reiner, V. Welker. Cohomology of real diagonal subspace arrangements via resolutions.
\ {\em Compositio Mathematica},  Vol. 117, Issue 01, May 1999, pp 107--123.

\bibitem{Reut_book} C. Reutenauer.
\ Free Lie algebras.
\ {\em London Mathematical Society Monographs.} New Series, 7. Oxford Science Publications. The Clarendon Press, Oxford University Press, New York, 1993. xviii+269 pp.

\bibitem{Reut_char} C. Reutenauer.
Dimensions and characters of the derived series of the free Lie algebra. Mots, 171--184,
{\em Lang. Raison. Calc.} , Herm\`es, Paris, 1990.

\bibitem{ReisWeiss} R. Reis, M. Weiss.
\ Functor calculus and the discriminant method.
\ To appear in {\em Quarterly J Math Oxf.}
\ Preprint arXiv:1103.1483.

\bibitem{Sinha_pair} D. Sinha. A pairing between graphs and trees. Preprint
 arXiv:math/0502547.


\bibitem{Sinha_cosimpl} D. Sinha. The topology of spaces of knots: cosimplicial models. {\em  Amer. J. Math.} 131 (2009), no. 4, 945--980.

\bibitem{Sinha_ld} D. Sinha. The homology of the little disks operad.
\ S\'eminaire et Congr\`es de Soci\'et\'e
Math\'ematique de France, 26 (2011) 255–-281.
\ Preprint, http://arxiv.org/abs/math.GT/0610236.pdf.


\bibitem{SinhaWalt1} D. Sinha, B.  Walter.
\ Lie coalgebras and rational homotopy theory, I: graph coalgebras.
\ {\em Homology Homotopy Appl.} 13 (2011), no. 2, 263--292.

\bibitem{SinhaWalt2} D. Sinha, B. Walter.
\ Lie coalgebras and rational homotopy theory II: Hopf invariants.
\ {\em Trans. Amer. Math. Soc.} 365 (2013), no. 2, 861--883.

\bibitem{SundWachs} Sh. Sundaram,  M.  Wachs.  The homology representations of the $k$-equal partition lattice.
\ {\em Trans. Amer. Math. Soc.}  349 (1997), no. 3, 935--954.

\bibitem{Tur_these} Sur les questions combinatoire de la theorie spectrale des noeuds. Ph.D. thesis. Univ Paris 7, 2002. 148 pp.,
    http://www.pdmi.ras.ru/~arnsem/papers/.

\bibitem{Tur_one_term} V. Turchin (Tourtchine).
\  What is one-term relation for higher homology of long knots.
\ {\em Mosc. Math. J.} 6 (2006), no. 1, 169--194, 223.

\bibitem{Tur_other} V. Turchin (Tourtchine).
\ On the other side of the bialgebra of chord diagrams.
\ {\em J. Knot Theory Ramifications} 16 (2007), no. 5, 575--629.

\bibitem{Tur_Hodge} V. Turchin,  Hodge-type decomposition in the homology of long knots. {\em J. Topol.} 3 (2010), no. 3, 487--534.

\bibitem{Tur_FM}  V. Turchin.
\  Context-free manifold calculus and the Fulton-MacPherson operad.
\ {\em Algebr. Geom. Topol. } 13 (2013), no. 3, 1243--1271.

\bibitem{Tur_deloop} V. Turchin.
\ Delooping totalization of a multiplicative operad.
\ {\em J. Homotopy Relat. Struct.} 2013. DOI
10.1007/s40062-013-0032-9.

\bibitem{Vass_discr} V.  Vassiliev.
Resolutions of discriminants and topology of their complements.
\ New developments in singularity theory (Cambridge, 2000), 87--115,
\ {\em NATO Sci. Ser. II Math. Phys. Chem.}, 21, Kluwer Acad. Publ., Dordrecht, 2001.

\bibitem{Vass_ornam} V.  Vassiliev.
\ Invariants of ornaments. (English summary) Singularities and bifurcations, 225--262,
\ {\em Adv. Soviet Math.}, 21, Amer. Math. Soc., Providence, RI, 1994.

\bibitem{WeissEmb} M. Weiss, Embeddings from the point of view of immersion theory. I. {\em Geom. Topol.} 3 (1999), 67--101.

%\bibitem{WeissHomol} M. Weiss. Homology of spaces of smooth embeddings, {\em Q. J. Math.} 55 (2004), no. 4, 499–504.


\bibitem{Yuzv} S.  Yuzvinsky.  Small rational model of subspace complement.
\ {\em Trans. Amer. Math. Soc.}  354 (2002), no. 5, 1921--1945.

\end{thebibliography}
\end{document}